\documentclass[11pt]{article}
    
    \usepackage[margin=1.0in]{geometry}
    \usepackage{mathtools}
    \usepackage{amsthm}
    \usepackage{fancyhdr}
    \usepackage{hyperref} 
    \usepackage{amssymb}
    \usepackage[capitalise]{cleveref}
    \usepackage{subcaption}
    \usepackage{cmbright}
    \usepackage{anyfontsize}
    \usepackage{xfrac}
    
    \newtheorem{theorem}{Theorem}[section]
    \newtheorem{corollary}{Corollary}[theorem]
    \newtheorem{lemma}[theorem]{Lemma}
    \newtheorem{defn}[theorem]{Definition}
    \newtheorem{remark}[theorem]{Remark}
    \newtheorem{prop}[theorem]{Proposition}
    \crefname{prop}{Proposition}{Propositions}
    
    \newcommand{\bbR}{\mathbb{R}}
    
    \newcommand{\diff}{\,\textnormal{d}}
    \newcommand{\dt}{\Delta t}
    \newcommand{\dv}{\,\diff\bfv}
    \newcommand{\intd}{{\int_{\bbR^d}}}

    \newcommand{\quand}{\quad \mbox{and} \quad}
    \newcommand{\qquand}{\qquad \mbox{and} \qquad}
    \newcommand{\qwhere}{\quad \mbox{where} \quad}
    \newcommand{\qqwhere}{\qquad \mbox{where} \qquad}
    \newcommand{\qquiff}{\qquad \iff \qquad}

    \newcommand{\vertiii}[1]{\left\vert\kern-0.25ex\left\vert\kern-0.25ex\left\vert #1 
        \right\vert\kern-0.25ex\right\vert\kern-0.25ex\right\vert}

    \newcommand{\froNorm}[1]{\| #1 \|_{\textnormal{F}}}

    \newcommand{\Delp}[1]{\Delta^{\ell} #1}
    \newcommand{\Delm}[1]{\Delta^{\ell-1} #1}
    
    \newcommand{\bfa}{\boldsymbol{a}}
    \newcommand{\bfb}{\boldsymbol{b}}
    \newcommand{\bfc}{\boldsymbol{c}}
    \newcommand{\bfk}{\boldsymbol{k}}
    \newcommand{\bfs}{\boldsymbol{s}}
    \newcommand{\bfu}{\boldsymbol{u}}
    \newcommand{\bfv}{\boldsymbol{v}}
    \newcommand{\bfw}{\boldsymbol{w}}
    \newcommand{\bfx}{\boldsymbol{x}}
    \newcommand{\bfy}{\boldsymbol{y}}
    \newcommand{\bfeta}{\boldsymbol{\eta}}
    
    \newcommand{\bfA}{\boldsymbol{A}}
    \newcommand{\bfT}{\boldsymbol{T}}
    \newcommand{\bfU}{\boldsymbol{U}}
    \newcommand{\bfW}{\boldsymbol{W}}
    \newcommand{\bfX}{\boldsymbol{X}}
    \newcommand{\bfY}{\boldsymbol{Y}}

    \newcommand{\veps}{\varepsilon}
    \newcommand{\what}{\widehat}
    
    \pagestyle{fancy}
    \setlength{\headheight}{15pt}
    \allowdisplaybreaks

    \rhead{E.~Habbershaw, C.~Hauck, S.~Wise}
    \lhead{Implicit Update of M-BGK Moment Equations}
    
    \date{27 April 2024}

\title{Implicit Update of the Moment Equations for a Multi-Species, Homogeneous BGK Model
\thanks{
  This manuscript has been authored, in part, by UT-Battelle, LLC, under contract DE-AC05-00OR22725 with the US Department of Energy (DOE). The U.S. government retains and the publisher, by accepting the article for publication, acknowledges that the U.S. government retains a nonexclusive, paid-up, irrevocable, worldwide license to publish or reproduce the published form of this manuscript, or allow others to do so, for U.S. government purposes. DOE will provide public access to these results of federally sponsored research in accordance with the DOE Public Access Plan (https://energy.gov/downloads/doe-public-access-plan).
  }
}

\author{
Evan Habbershaw\thanks{
    Department of Mathematics, The University of Tennessee, Knoxville, TN, 37996
  }, 
Cory D. Hauck\thanks{
    Department of Mathematics, The University of Tennessee, Knoxville, TN, 37996
    and
    Computer Science and Mathematics Division, Oak Ridge National Laboratory, Oak Ridge, TN, 37831
  }, and 
Steven M. Wise\thanks{
  Department of Mathematics, The University of Tennessee,  Knoxville, TN, 37996
  }
}

    \begin{document}
    \pagenumbering{arabic}
    
    \maketitle

    \begin{abstract}
\noindent A simple iterative approach for solving a set of implicit kinetic moment equations is proposed.  This implicit solve is a key component in the IMEX discretization of the multi-species Bhatnagar-Gross-Krook (M-BGK) model with nontrivial collision frequencies depending on individual species temperatures. We prove that under mild time step restrictions, the iterative method generates a contraction mapping.  Numerical simulations are provided to illustrate results of the IMEX scheme using the implicit moment solver.
    \end{abstract}
    
\noindent\textbf{Keywords:} Multi-species BGK, fixed point iterative solvers, implicit time stepping strategies, moment equations, kinetic theory, rarefied gas dynamics

\noindent\textbf{MSC Codes:} 76P05, 47J25, 35Q70, 82C40, 65L04
    
    \section{Introduction}

Bhatnagar-Gross-Krook (BGK) models (see e.g. \cite{liboff2003kinetic}) are a computationally inexpensive alternative to the Boltzmann equation of rarefied gas dynamics.
They are derived by replacing the sophisticated Boltzmann collision operator with a nonlinear relaxation model.  
In the single species case, the target of this relaxation model is a Maxwellian distribution that shares the same mass, momentum, and energy density as the kinetic distribution.
This model was first introduced in \cite{bhatnagar1954}, and rigorous mathematical results about the model can be found in \cite{perthame_1989,perthame_1993}.

Like the Boltzmann equation, the single-species BGK model recovers the compressible system of Euler equations in the limit of infinite collisions \cite{cercignani2013mathematical}.
For simulation purposes, implicit-explicit (IMEX) methods are often employed that treat the advection term explicitly and the stiff collision term implicitly.
The strategy for the implicit evaluation of the BGK operator is to recognize that the moments that characterize the Maxwellian target are conserved by the space-homogeneous problem.
As a result, the implicit part of the relaxation operator is reduced to a simple multiple of the identity.
This trick, introduced in \cite{coron}, is the basis of many semi-implicit schemes.

More recently, multi-species versions of the BGK equation have been derived that satisfy the conservation and entropy dissipation properties of the multi-species Boltzmann equation.
Some models still use a single relaxation operator \cite{andries2002consistent,brull2012derivation} while others use a sum of operators with Maxwellian-like targets for each binary interaction \cite{bobylev2018general,klingenberg2017consistent,haack2017model,haack2021consistent}.
We focus here on the model from \cite{haack2017model}, but the approach and analysis can be extended to the generalizations in \cite{bobylev2018general,klingenberg2017consistent}.
In these models, each relaxation operator is equipped with a collision frequency.
Conservation of the number density in the multi-species setting implies that frequencies that depend only on the number densities will not be affected by the collision process.
However, for general molecular models, collision frequencies may depend on the species temperature, average velocities, and even microscopic velocities \cite{book:Bird_DSMC}, and removing such dependencies can significantly alter the behavior of solutions \cite{haack2021consistent}.
In addition, the analysis and simulation of the BGK equations becomes more difficult when the frequencies depend on time and/or phase-space velocity.

The focus of the present paper is the implicit update of the collision term in the multi-species setting when more realistic collision frequencies are employed.
In order to follow the strategy used for the single-species model, one must update an ODE system for the species' average velocities and temperatures, thereby enabling the evaluation of the Maxwellian target at the next time step.
As in the single species case, this approach effectively linearizes the problem, allowing for a simple update of the collision term.
However, unlike the single species case, the moments of each species are not conserved; rather the individual species densities, total momentum, and total energy are conserved.
As a result, the ODE for the average velocities and temperatures requires an implicit solve.
For collision frequencies that depend only on species densities, such a solve is still trivial to implement because the frequencies are not affected by the collisions.
However, for more complicated collision frequencies, a fully implicit solve must take such variations into account.

We propose here to solve the multi-species BGK collision operator in a fully implicit manner using a nonlinear Gauss-Seidel-type iterative solver.
The focus of the paper is on the numerical analysis of the solver.
In particular, we demonstrate a contraction mapping under mild time step restrictions.
Importantly, we show that, given a user specified time step, there is a stable time step that, at worst, is smaller by an $\mathcal{O}(1)$ constant that does not depend on the stiffness of the collision operator.
Thus, in the fluid regime, the method can still take time steps determined by particle advection and not particle collisions.
This result and the underlying analysis can be extended to general relaxation systems with state-dependent relaxation times.

The remainder of the paper is organized as follows.
In \Cref{section:background}, we introduce the model and discuss the IMEX strategy that provides the context for the implicit solver.
In \Cref{section:solver}, we introduce the solver.
In \Cref{section:velocityProof} and \Cref{section:temperatureProof}, we analyze the convergence of the solver, and in \Cref{section:timeStepSelection}, we analyze the associated time step restriction.
In \Cref{section:numerics}, we provide several numerical demonstrations, and in \Cref{section:conclusions} we make conclusions and discuss future work.
The Appendix contains several results used in the main body of the paper.

    \section{Background}
    \label{section:background}
    
    \subsection{Models and equations}
    
We denote by $f_i$ the kinetic distribution of particles of species $i\in\{1,\cdots,N\}$ having mass $m_i>0$.
More specifically, $f_i(\bfx,\bfv,t)$ is the density of species $i$ particles at the point $\bfx\in\Omega\subset\bbR^d$, with microscopic velocity $\bfv\in\bbR^d$, at time $t\geq0,$ with respect to the measure $\diff\bfv\,\diff\bfx$.
Associated to each $f_i$ are the species number density $n_i$, bulk velocity $\bfu_i$, and temperature $T_i$, defined by
    \begin{equation}
n_i = \intd f_i \dv
  ,\qquad
\bfu_i = \frac{1}{n_i}\intd\bfv f_i \dv
  ,\qquad
T_i  = \frac{m_i}{n_i d}\intd|\bfv-\bfu_i|^2f_i \dv
.
    \end{equation}
The species mass density $\rho_i$, momentum density $\bfk_i$, and energy density $E_i$ are
    \begin{equation}
\rho_i = m_in_i
  ,\qquad  
\bfk_i = \rho_i \bfu_i
  ,\qquand
E_i = \frac{1}{2}\rho_i|\bfu_i|^2 + \frac{d}{2}n_i T_i
.
    \end{equation}
    
BGK models are expressed in terms of Maxwellian distributions that depend on these moments.

    \begin{defn}
    \label{defn:Maxwellian}
Given $n>0$, $\bfu\in\bbR^d$, and $\theta>0$, the Maxwellian $M_{n,\bfu,\theta}$ is a function of the form
    \begin{equation}
    \label{eq:Maxwellian_def}
M_{n,\bfu,\theta}(\bfv)
  = \frac{n}{(2\pi\theta)^{d/2}}
  \exp
    \left(
      -\frac{|\bfv-\bfu|^2}{2\theta}
    \right)
.
    \end{equation}
    \end{defn}
    
Straightforward computations show that the moments of $M_{n,\bfu,\theta}$ satisfy
    \begin{equation}
\intd M_{n,\bfu,\theta}\,\diff\bfv = n,
    \qquad
\intd\bfv M_{n,\bfu,\theta} \, \diff\bfv = n\bfu,
    \qquad
\intd|\bfv|^2M_{n,\bfu,\theta} \, \diff\bfv = n|\bfu|^2+dn\theta
.
    \end{equation}

We consider in this paper the multi-species BGK equation from \cite{haack2017model,asymptoticRelaxation}, given by
    \begin{equation}
    \label{eq:BGKequation}
\frac{\partial f_i}{\partial t}
+
\bfv\cdot\nabla_{\bfx}f_i
    =
      \frac{1}{\veps}\sum_{j=1}^N\lambda_{i,j}(M_{i,j}-f_i),
\qquad\forall i\in\{1,\cdots,N\}
,
    \end{equation}
where $\veps>0$ is a dimensionless parameter that comes from a rescaling of space and time, $\lambda_{i,j}>0$ is the frequency of collisions between species $i$ and $j$ (independent of $\bfv$), and
    \begin{equation}
M_{i,j}(\bfv)
  \coloneqq
    M_{n_i,\bfu_{i,j},T_{i,j}/m_i}(\bfv) 
    \end{equation}
is a Maxwellian defined by \eqref{eq:Maxwellian_def}, using the mixture velocities and temperatures
    \begin{subequations}
    \label{eq:MomentDefs}
    \begin{align}
\bfu_{i,j}
  &=
    \frac
        {\rho_i\lambda_{i,j}\bfu_i + \rho_j\lambda_{j,i}\bfu_j}
        {\rho_i\lambda_{i,j} + \rho_j\lambda_{j,i}},
    \\
T_{i,j}
  &=
    \frac
        {n_i\lambda_{i,j}T_i + n_j\lambda_{j,i}T_j}
        {n_i\lambda_{i,j} + n_j\lambda_{j,i}}
    +
    \frac{1}{d}
    \frac
        {\rho_i\rho_j\lambda_{i,j}\lambda_{j,i}}
        {\rho_i\lambda_{i,j}+\rho_j\lambda_{j,i}}
    \frac
        {|\bfu_i-\bfu_j|^2}
        {n_i\lambda_{i,j}+n_j\lambda_{j,i}}
.
    \end{align}
    \end{subequations}
As in \cite{asymptoticRelaxation}, we consider a hard spheres collision model for the collision frequencies:
    \begin{equation}
    \label{eq:collisionFrequencies}
\lambda_{i,j}
  \coloneqq
    \lambda_{i,j}^{\textnormal{HS}}(n_j,T_i,T_j)
  =
    \frac{32 \pi^2}{3(2\pi)^{3/2}} \frac{m_i m_j}{(m_i + m_j)^2}(d_i + d_j)^2n_j\sqrt{\frac{T_i}{m_i} + \frac{T_j}{m_j}},
    \end{equation}
where $d_i$ and $d_j$ are reference diameters for the particles of species $i$ and $j$.
For simplicity, we write $\lambda_{i,j}= c_{i,j}n_j\Psi_{i,j},$ where 
    \begin{equation}
    \label{eq:Psi}
\Psi_{i,j}(T_i,T_j)
  =
    \sqrt{\frac{T_i}{m_i}+\frac{T_j}{m_j}}
    \end{equation}
is the only time-dependent term.
The challenging features of $\lambda_{i,j}^{\textnormal{HS}}$ are (i) it depends on species temperatures that are not conserved by collision dynamics, and (ii) this temperature dependence is not Lipschitz at zero temperatures.

Going forward, we will abbreviate summation symbols $\sum_{j=1}^N$ as $\sum_j$, when the upper and lower index is understood as $j=1$ and $j=N$, respectively.

    \subsection{IMEX strategy and backward Euler step}
    \label{section:imex}
    
Implicit-explicit (IMEX) Runge Kutta methods are a common tool for simulating kinetic equations.
These methods typically treat advection explicitly and collisions implicitly.
A diagonally implicit IMEX Runge-Kutta method for \eqref{eq:BGKequation} with $\nu$ stages and time step $\dt^{\mathfrak{n}}>0$ takes the form \cite{puppo2019kinetic}
    \begin{subequations}
    \label{eq:IMEX}
    \begin{align}
    \label{eq:IMEX_stages}
f_i^{(s)}
  &=
    f_i^{(s,\star)}
    +
    \frac{\dt^\mathfrak{n}}{\veps} a_{s,s} \sum_j
      \lambda_{i,j}^{(s)} (M^{(s)}_{i,j}-f^{(s)}_i)
,\qquad s \in \{1, \cdots, \nu\}
,
    \\
f^{\mathfrak{n}+1}_i
  &= 
    f_i^{\mathfrak{n}}
    -
    \dt^{\mathfrak{n}}
    \sum_{s=1}^\nu
      \widetilde{b}_s \bfv\cdot\nabla_{\bfx}f_i^{(s)}
    +
    \frac{\dt^{\mathfrak{n}}}{\veps}
    \sum_{s=1}^\nu
      b_s
      \sum_j
        \lambda_{i,j}^{(s)}(M_{i,j}^{(s)}-f_i^{(s)})
,
    \end{align}
    \end{subequations}
where
    \begin{equation}
f_i^{(s,\star)}
  = 
    f_i^{\mathfrak{n}}
    -
    \dt^{\mathfrak{n}}
    \sum_{r=1}^{s-1}
      \widetilde{a}_{s,r} \bfv \cdot \nabla_{\bfx}  f^{(r)}_i
    +
    \frac{\dt^{\mathfrak{n}}}{\veps}
    \sum_{r=1}^{s-1}
      a_{s,r}
      \sum_j
        \lambda_{i,j}^{(r)} (M^{(r)}_{i,j}-f^{(r)}_i)
,
    \end{equation}
and
$\lambda_{i,j}^{(r)} = \lambda_{i,j}^{\textnormal{HS}}(n_j^{(r)},T^{(r)}_i,T^{(r)}_j)$.
The coefficients $a_{s,r}$ and $\widetilde{a}_{s,r}$ are determined by order conditions and other desired properties of the method \cite{pareschi2005implicit,ascher1997implicit,boscarino2017unified}.

A prototype for the update in \eqref{eq:IMEX_stages}, and the focus of this paper, is the Backward Euler update, namely,
    \begin{equation}
    \label{eq:IMEX_BE}
f_i^{\mathfrak{n}+1}
  =
    f_i^{\mathfrak{n}}
    +
    \frac{\dt}{\veps}
    \sum_j
      \lambda_{i,j}^{\mathfrak{n}+1}(M^{\mathfrak{n}+1}_{i,j}-f^{\mathfrak{n}+1}_i)
.
    \end{equation}
Indeed, given $f_i^{(s,\star)}$, solving \eqref{eq:IMEX_BE} is equivalent to solving \eqref{eq:IMEX} with slightly modified constants.
The standard approach to solve the nonlinear equation \eqref{eq:IMEX_BE} is to first solve moment equations for $n^{\mathfrak{n}+1}_i$, $\bfu^{\mathfrak{n}+1}_i$, and $T^{\mathfrak{n}+1}_i$, which can then be used to define $M^{\mathfrak{n}+1}_{i,j}$ in an explicit fashion; what remains is a linear solve for $f_i^{\mathfrak{n}+1}$ \cite{puppo2019kinetic}.
In the single species case, the update is trivial: due to conservation properties of the collision operator $(n_i^{\mathfrak{n}+1},\bfu_i^{\mathfrak{n}+1},T_i^{\mathfrak{n}+1})=(n_i^{\mathfrak{n}},\bfu_i^{\mathfrak{n}},T_i^{\mathfrak{n}})$ \cite{coron}.
In the multi-species case, taking moments of \eqref{eq:IMEX_BE} and applying the definitions of $\bfu_{i,j}$ and $T_{i,j}$ from \eqref{eq:MomentDefs} gives the following algebraic system:

    \begin{prop}[Backward Euler]
If scheme \eqref{eq:IMEX_BE} is applied for the kinetic densities, then, for each $i\in\{1,\cdots,N\}$,
    \begin{subequations}
    \label{eq:BEstep}
    \begin{align}
    \label{eq:BE_number} 
\frac{n_{i}^{\mathfrak{n}+1}-n_i^{\mathfrak{n}}}{\dt}
  &= 0
,
    \\
    \label{eq:BE_velocity}
\rho_i\frac{\bfu_{i}^{\mathfrak{n}+1}-\bfu_i^{\mathfrak{n}}}{\dt}
  &=
    \frac{1}{\veps}\sum_j
    \left(
      A_{i,j}^{\mathfrak{n}+1}
      -
      D_{i,j}^{\mathfrak{n}+1}
    \right)
    \bfu_{j}^{\mathfrak{n}+1},
    \\
    \label{eq:BE_temperature}
    \begin{split}
\rho_i\frac{s_i^{\mathfrak{n}+1}-s_i^{\mathfrak{n}}}{\dt}
+
dn_i \frac{T_i^{\mathfrak{n}+1}-T_i^{\mathfrak{n}}}{\dt}
  &=
    \frac{d}{\veps}\sum_j
    \left(
      B_{i,j}^{\mathfrak{n}+1}
      -
      F_{i,j}^{\mathfrak{n}+1}
    \right)
    T_j^{\mathfrak{n}+1}
    \\
+\frac{1}{\veps}\sum_jB_{i,j}^{\mathfrak{n}+1}
\!&\,
    \left[
      m_j(s_j^{\mathfrak{n}+1}-S_{i,j}^{\mathfrak{n}+1})
      -
      m_i(s_i^{\mathfrak{n}+1}-S_{i,j}^{\mathfrak{n}+1})
    \right]
,
    \end{split}
    \end{align}
   \end{subequations}
where $s_i^\mathfrak{n} = |\bfu_i^\mathfrak{n}|^2$ and the ${N\times N}$ matrices $A$, $B$, $D$, $F$, and $S$, at time step $t^{\mathfrak{n}+1}$ are given by
    \begin{subequations}
    \begin{gather}
[A]_{i,j}^{\mathfrak{n}+1}
  = 
    \frac
        {\rho_i\rho_j\lambda_{i,j}^{\mathfrak{n}+1}\lambda_{j,i}^{\mathfrak{n}+1}}
        {\rho_i\lambda_{i,j}^{\mathfrak{n}+1}+\rho_j\lambda_{j,i}^{\mathfrak{n}+1}}
,\qquad
[B]_{i,j}^{\mathfrak{n}+1}
  =
    \frac
        {n_in_j\lambda_{i,j}^{\mathfrak{n}+1}\lambda_{j,i}^{\mathfrak{n}+1}}
        {n_i\lambda_{i,j}^{\mathfrak{n}+1}+n_j\lambda_{j,i}^{\mathfrak{n}+1}}
,\qquad
[S]_{i,j}^{\mathfrak{n}+1}
  =
    \left|
      \bfu_{i,j}^{\mathfrak{n}+1}
    \right|^2
,
    \\
[D]_{i,j}^{\mathfrak{n}+1}
  =
    \Big(
      \sum_k A_{i,k}^{\mathfrak{n}+1}
    \Big)
    \delta_{i,j}
,\qquad
[F]_{i,j}^{\mathfrak{n}+1}
  =
    \Big(
      \sum_k B_{i,k}^{\mathfrak{n}+1}
    \Big)
    \delta_{i,j}
.
    \end{gather}
    \end{subequations}
    \end{prop}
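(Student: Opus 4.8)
The plan is to obtain the system \eqref{eq:BEstep} by taking the zeroth, first, and second velocity moments of the Backward Euler update \eqref{eq:IMEX_BE}, substituting the definitions \eqref{eq:MomentDefs} of the mixture quantities $\bfu_{i,j}$ and $T_{i,j}$, and collecting terms into the matrices $A$, $B$, $D$, $F$, $S$. Integrating \eqref{eq:IMEX_BE} over $\bfv\in\bbR^d$ and using $\intd M_{i,j}\dv = n_i = \intd f_i\dv$ makes the collision term vanish identically, so $n_i^{\mfn+1}=n_i^\mfn$, which is \eqref{eq:BE_number}; in particular $\rho_i = m_i n_i$ is time-independent, so no time index is needed on it below. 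Multiplying \eqref{eq:IMEX_BE} by $m_i\bfv$ and integrating, with $\intd m_i\bfv\, M_{i,j}\dv = \rho_i\bfu_{i,j}$ and $\intd m_i\bfv\, f_i\dv = \rho_i\bfu_i$, gives $\rho_i(\bfu_i^{\mfn+1}-\bfu_i^\mfn) = (\dt/\veps)\sum_j\rho_i\lambda_{i,j}^{\mfn+1}(\bfu_{i,j}^{\mfn+1}-\bfu_i^{\mfn+1})$. The definition of $\bfu_{i,j}$ then yields the identity $\rho_i\lambda_{i,j}(\bfu_{i,j}-\bfu_i)=A_{i,j}(\bfu_j-\bfu_i)$; inserting it, splitting the sum, and using $\big(\sum_k A_{i,k}\big)\bfu_i = \sum_j D_{i,j}\bfu_j$ produces \eqref{eq:BE_velocity}.

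For the temperature equation, multiplying \eqref{eq:IMEX_BE} by $\tfrac12 m_i|\bfv|^2$, integrating with $\intd|\bfv|^2 M_{i,j}\dv = n_i|\bfu_{i,j}|^2 + dn_i T_{i,j}/m_i$, and writing the result via $E_i=\tfrac12\rho_i s_i + \tfrac d2 n_i T_i$ gives, after dividing by $\dt$,
\[ \rho_i\frac{s_i^{\mfn+1}-s_i^\mfn}{\dt} + dn_i\frac{T_i^{\mfn+1}-T_i^\mfn}{\dt} = \frac1\veps\sum_j\lambda_{i,j}^{\mfn+1}\Big[\rho_i\big(S_{i,j}^{\mfn+1}-s_i^{\mfn+1}\big)+dn_i\big(T_{i,j}^{\mfn+1}-T_i^{\mfn+1}\big)\Big]. \]
Substituting $T_{i,j}$ from \eqref{eq:MomentDefs}, its weighted-average part contributes $dB_{i,j}(T_j-T_i)$ for each $j$, which upon summation and use of $\big(\sum_k B_{i,k}\big)T_i = \sum_j F_{i,j}T_j$ becomes the term $(d/\veps)\sum_j(B_{i,j}-F_{i,j})T_j$ in \eqref{eq:BE_temperature}. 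It then remains to identify the leftover velocity-dependent pieces — namely $\lambda_{i,j}\rho_i(S_{i,j}-s_i)$ together with the contribution $n_i\lambda_{i,j}A_{i,j}|\bfu_i-\bfu_j|^2/(n_i\lambda_{i,j}+n_j\lambda_{j,i})$ from the second part of $T_{i,j}$ — with $B_{i,j}\big[m_j(s_j-S_{i,j})-m_i(s_i-S_{i,j})\big]$, term by term in $j$.

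This last identity is the one genuinely nontrivial computation, and the expected obstacle; everything else is bookkeeping. To verify it, set $a=\rho_i\lambda_{i,j}$, $b=\rho_j\lambda_{j,i}$, $p=n_i\lambda_{i,j}$, $q=n_j\lambda_{j,i}$, so that $\bfu_{i,j}=(a\bfu_i+b\bfu_j)/(a+b)$, $A_{i,j}=ab/(a+b)$, $B_{i,j}=pq/(p+q)$, and, crucially, $a=m_i p$ and $b=m_j q$. Expanding $S_{i,j}=|\bfu_{i,j}|^2$ and writing each side in terms of $s_j-s_i$ and $|\bfu_i-\bfu_j|^2$, both sides equal $A_{i,j}(s_j-s_i)$ plus a scalar multiple of $|\bfu_i-\bfu_j|^2$; matching the coefficients of $|\bfu_i-\bfu_j|^2$ reduces to the elementary relation $pb-aq = pq(m_j-m_i)$, which is immediate from $a=m_i p$, $b=m_j q$. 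Collecting all terms then produces \eqref{eq:BE_temperature}, completing the proof.
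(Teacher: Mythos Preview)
Your proposal is correct and follows exactly the approach the paper indicates: the paper does not give a formal proof of this proposition but simply states that ``taking moments of \eqref{eq:IMEX_BE} and applying the definitions of $\bfu_{i,j}$ and $T_{i,j}$ from \eqref{eq:MomentDefs} gives the following algebraic system,'' which is precisely what you carry out. Your explicit verification of the velocity-dependent identity in the temperature equation (via the substitutions $a=m_ip$, $b=m_jq$ and the relation $pb-aq=pq(m_j-m_i)$) supplies the one nontrivial algebraic step the paper leaves to the reader.
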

    
    \begin{remark}
According to \eqref{eq:BE_number}, the quantities $n_i$ and $\rho_i$ are independent of $\mathfrak{n}$.
We therefore do not include the superscript on these variables.
    \end{remark}

We summarize some useful properties of the backward Euler method below.
    
    \begin{prop}[Discrete Conservation Laws]
    \label{prop:discreteConsLaws}
The backward Euler scheme in \eqref{eq:BEstep} satisfies discrete versions of the conservation of total momentum and conservation of total energy.
Specifically,
    \begin{equation}
    \label{eq:discreteConsLaws}
\sum_i\rho_i\bfu_i^{\mathfrak{n}+1}
  =
    \sum_i\rho_i\bfu_i^{\mathfrak{n}} 
\qquand
\sum_iE_i^{\mathfrak{n}+1}
  =
    \sum_iE_i^{\mathfrak{n}}
.
    \end{equation}
    \end{prop}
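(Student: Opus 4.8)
The plan is to derive both conservation laws directly from the algebraic system \eqref{eq:BEstep}, using only the symmetries $A_{i,j}^{\mathfrak{n}+1}=A_{j,i}^{\mathfrak{n}+1}$, $B_{i,j}^{\mathfrak{n}+1}=B_{j,i}^{\mathfrak{n}+1}$, and $S_{i,j}^{\mathfrak{n}+1}=S_{j,i}^{\mathfrak{n}+1}$ (the last one because $\bfu_{i,j}$ is invariant under the exchange $i\leftrightarrow j$), together with the diagonal structure $D_{i,j}^{\mathfrak{n}+1}=\big(\sum_k A_{i,k}^{\mathfrak{n}+1}\big)\delta_{i,j}$ and $F_{i,j}^{\mathfrak{n}+1}=\big(\sum_k B_{i,k}^{\mathfrak{n}+1}\big)\delta_{i,j}$. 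Each of the two identities then reduces to showing that a double sum over species of an antisymmetric quantity weighted by a symmetric one vanishes.

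For total momentum, I would sum \eqref{eq:BE_velocity} over $i$. Since $\sum_j D_{i,j}^{\mathfrak{n}+1}\bfu_j^{\mathfrak{n}+1}=\big(\sum_k A_{i,k}^{\mathfrak{n}+1}\big)\bfu_i^{\mathfrak{n}+1}$, relabeling the inner index in the $A$-term gives
\begin{equation}
\frac{1}{\veps}\sum_i\sum_j\big(A_{i,j}^{\mathfrak{n}+1}-D_{i,j}^{\mathfrak{n}+1}\big)\bfu_j^{\mathfrak{n}+1}
  =\frac{1}{\veps}\sum_i\sum_j A_{i,j}^{\mathfrak{n}+1}\big(\bfu_j^{\mathfrak{n}+1}-\bfu_i^{\mathfrak{n}+1}\big),
\end{equation}
and the right-hand side is antisymmetric under $i\leftrightarrow j$ because $A^{\mathfrak{n}+1}$ is symmetric, so it vanishes. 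Hence $\sum_i\rho_i\big(\bfu_i^{\mathfrak{n}+1}-\bfu_i^{\mathfrak{n}}\big)=0$, which is the first identity in \eqref{eq:discreteConsLaws}.

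For total energy, I would first observe that $E_i=\tfrac12\rho_i s_i+\tfrac{d}{2}n_i T_i$ with $s_i=|\bfu_i|^2$, and that $n_i,\rho_i$ are independent of $\mathfrak{n}$ by \eqref{eq:BE_number}; consequently the left-hand side of \eqref{eq:BE_temperature}, multiplied by $\dt/2$, is exactly $E_i^{\mathfrak{n}+1}-E_i^{\mathfrak{n}}$. Summing \eqref{eq:BE_temperature} over $i$ and multiplying by $\dt/2$ then expresses $\sum_i\big(E_i^{\mathfrak{n}+1}-E_i^{\mathfrak{n}}\big)$ as $\tfrac{\dt}{2\veps}$ times two pieces. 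The $d\sum_j(B-F)T$ piece collapses, exactly as in the momentum case, to $d\sum_{i,j}B_{i,j}^{\mathfrak{n}+1}\big(T_j^{\mathfrak{n}+1}-T_i^{\mathfrak{n}+1}\big)$, which vanishes by symmetry of $B^{\mathfrak{n}+1}$. For the remaining piece I would expand the bracket and regroup:
\begin{equation}
\begin{split}
\sum_{i,j}B_{i,j}^{\mathfrak{n}+1}&\big[m_j(s_j^{\mathfrak{n}+1}-S_{i,j}^{\mathfrak{n}+1})-m_i(s_i^{\mathfrak{n}+1}-S_{i,j}^{\mathfrak{n}+1})\big]\\
  &=\sum_{i,j}B_{i,j}^{\mathfrak{n}+1}\big(m_j s_j^{\mathfrak{n}+1}-m_i s_i^{\mathfrak{n}+1}\big)
  +\sum_{i,j}B_{i,j}^{\mathfrak{n}+1}S_{i,j}^{\mathfrak{n}+1}\big(m_i-m_j\big).
\end{split}
\end{equation}
The first double sum pairs the symmetric weight $B_{i,j}^{\mathfrak{n}+1}$ with the antisymmetric factor $m_j s_j^{\mathfrak{n}+1}-m_i s_i^{\mathfrak{n}+1}$, and the second pairs the symmetric weight $B_{i,j}^{\mathfrak{n}+1}S_{i,j}^{\mathfrak{n}+1}$ with the antisymmetric factor $m_i-m_j$; both vanish. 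Therefore $\sum_i\big(E_i^{\mathfrak{n}+1}-E_i^{\mathfrak{n}}\big)=0$, which is the second identity.

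The argument is essentially bookkeeping, so I do not anticipate a genuine obstacle; the only substantive points are the symmetry/antisymmetry pairings and the diagonal structure of $D$ and $F$, and the single place where mild care is needed is verifying that the $|\bfu_i|^2$ and $|\bfu_{i,j}|^2$ contributions in \eqref{eq:BE_temperature} cancel against one another. An equivalent and perhaps more transparent route is to integrate \eqref{eq:IMEX_BE} against $m_i$, $m_i\bfv$, and $\tfrac12 m_i|\bfv|^2$, sum over $i$, and use that the M-BGK collision term conserves species mass together with total momentum and total energy—properties that follow from \eqref{eq:MomentDefs} through the very same symmetry arguments.
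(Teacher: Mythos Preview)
Your proof is correct and follows essentially the same approach as the paper: sum \eqref{eq:BE_velocity} and \eqref{eq:BE_temperature} over $i$ and use the symmetry of $A^{\mathfrak{n}+1}$, $B^{\mathfrak{n}+1}$, and $S^{\mathfrak{n}+1}$ together with the diagonal structure of $D^{\mathfrak{n}+1}$ and $F^{\mathfrak{n}+1}$. The paper only sketches the momentum case and leaves the energy case implicit, whereas you spell out the antisymmetry cancellations for the $s$- and $S$-terms explicitly; this is additional detail rather than a different argument.
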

    
    \begin{proof}
Sum \eqref{eq:BE_velocity} and \eqref{eq:BE_temperature} over all species $i$, and use the symmetry of $A_{i,j}^{\mathfrak{n}+1}$ and $B_{i,j}^{\mathfrak{n}+1}$ to obtain the result.
For example, summing \eqref{eq:BE_velocity} on $i$, using the definition of $D_{i,j}^{\mathfrak{n}+1}$, and symmetry of $A_{i,j}^{\mathfrak{n}+1}$ gives $\frac{1}{\veps}\sum_{i,j}(A_{i,j}^{\mathfrak{n}+1}-D_{i,j}^{\mathfrak{n}+1})\bfu_j^{\mathfrak{n}+1} = \frac{1}{\veps}\sum_{i,j}A_{i,j}^{\mathfrak{n}+1}(\bfu_j^{\mathfrak{n}+1}-\bfu_i^{\mathfrak{n}+1}) = 0$.
    \end{proof}
    
    \begin{prop}
The quantities
    \begin{equation}
\bfu^\infty
  = \frac{\sum_i\rho_i\bfu_i^{\mathfrak{n}}}{\sum_i\rho_i}
\qquand
T^\infty
  = \frac{\sum_in_iT_i^{\mathfrak{n}}}{\sum_in_i}+\frac{\sum_i\rho_i\left(|\bfu_i^{\mathfrak{n}}|^2-|\bfu^\infty|^2\right)}{d\sum_in_i}
    \end{equation}
are independent of the time step index $\mathfrak{n}$.
    \end{prop}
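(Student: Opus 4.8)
The plan is to reduce both quantities to combinations of the invariants supplied by \Cref{prop:discreteConsLaws}. For $\bfu^\infty$ this is immediate: the numerator $\sum_i \rho_i \bfu_i^{\mathfrak{n}}$ is conserved by the momentum law in \eqref{eq:discreteConsLaws}, while the denominator $\sum_i \rho_i$ carries no time index (by the Remark following the Backward Euler proposition), so the ratio --- and hence also $|\bfu^\infty|^2$ --- is independent of $\mathfrak{n}$. I would record this observation first, since $|\bfu^\infty|^2$ reappears in the formula for $T^\infty$.

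For $T^\infty$ I would clear denominators and write
\begin{equation*}
\Big(\sum_i n_i\Big) T^\infty
  = \sum_i n_i T_i^{\mathfrak{n}}
    + \frac{1}{d}\sum_i \rho_i |\bfu_i^{\mathfrak{n}}|^2
    - \frac{1}{d}\,|\bfu^\infty|^2 \sum_i \rho_i ,
\end{equation*}
and then eliminate the species temperatures in favor of the total energy by means of $\sum_i n_i T_i^{\mathfrak{n}} = \frac{2}{d}\sum_i E_i^{\mathfrak{n}} - \frac{1}{d}\sum_i \rho_i |\bfu_i^{\mathfrak{n}}|^2$, which is nothing more than the definition of $E_i$ summed over $i$. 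The two copies of $\frac{1}{d}\sum_i \rho_i |\bfu_i^{\mathfrak{n}}|^2$ cancel, leaving
\begin{equation*}
\Big(\sum_i n_i\Big) T^\infty
  = \frac{2}{d}\sum_i E_i^{\mathfrak{n}}
    - \frac{1}{d}\,|\bfu^\infty|^2 \sum_i \rho_i .
\end{equation*}
Every term on the right is now manifestly independent of $\mathfrak{n}$: $\sum_i E_i^{\mathfrak{n}}$ by the energy law in \eqref{eq:discreteConsLaws}, $|\bfu^\infty|^2$ by the previous paragraph, and $\sum_i n_i$, $\sum_i \rho_i$ because $n_i$ and $\rho_i$ do not depend on $\mathfrak{n}$. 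Dividing through by the positive quantity $\sum_i n_i$ yields the claim.

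The argument is pure bookkeeping, so I do not anticipate a genuine obstacle. The one point worth flagging is the cancellation of the bulk kinetic-energy contributions: this is exactly what permits the (individually non-conserved) species temperatures and velocities to be repackaged into the conserved total energy, and it explains why the correction term involving $|\bfu_i^{\mathfrak{n}}|^2 - |\bfu^\infty|^2$ in the definition of $T^\infty$ is precisely the right one.
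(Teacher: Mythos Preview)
Your proof is correct and follows essentially the same route as the paper: both arguments show $\bfu^\infty$ is constant via momentum conservation, then rewrite $T^\infty$ as $\frac{2\sum_i E_i^{\mathfrak{n}} - \sum_i \rho_i |\bfu^\infty|^2}{d\sum_i n_i}$ and invoke energy conservation. You spell out the cancellation of the $\frac{1}{d}\sum_i \rho_i |\bfu_i^{\mathfrak{n}}|^2$ terms more explicitly than the paper does, but the substance is identical.
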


    \begin{proof}
The fact that $\bfu^\infty$ is independent of $\mathfrak{n}$ follows immediately from the first equation in \eqref{eq:discreteConsLaws}.
Meanwhile, 
    \begin{equation}
T^\infty
  =
    \frac{2\sum_iE_i^{\mathfrak{n}}-\sum_i\rho_i|\bfu^\infty|^2}{d\sum_in_i}
    \end{equation}
is independent of $\mathfrak{n}$ according to the second equation in \eqref{eq:discreteConsLaws} and the fact that $\bfu^\infty$ is also independent of $\mathfrak{n}$.
    \end{proof}

When  collision frequencies are constant or depend only on the number densities, $\lambda_{i,j}^{\mathfrak{n}+1} = \lambda_{i,j}^{\mathfrak{n}}$, and solving \eqref{eq:BEstep} is straightforward \cite{puppo2019kinetic}.
First, \eqref{eq:BE_velocity} requires a linear solve for $\bfu^{\mathfrak{n}+1}_i$.
The new velocity can then be used to evaluate implicit source terms in \eqref{eq:BE_temperature}, leaving another linear solve to find each $T_i$.
However, for more realistic frequencies that depend on $T_i$, this approach no longer works.
Two options remain: (i) lag $\lambda_{i,j}$ at the previous time step or (ii) solve \eqref{eq:BEstep} in a fully nonlinear fashion.  

Since the first option may lead to loss of accuracy or instability, we focus here on the second option.
In terms of cost, the linear solve is trivial compared to the rest of the kinetic simulation.
The challenge is to establish an iterative algorithm that is provably convergent, even when $\dt / \veps$ is large.

    \section{Iterative Solver}
    \label{section:solver}
    
    \subsection{Definition and Main Result}
    
We propose the following Gauss-Seidel type (GST) iterative solver to solve the backward Euler system in \eqref{eq:BEstep}.
We show below that, under a mild time step restriction which is independent of $\veps$, the GST iteration has a fixed point that yields the unique solution to \eqref{eq:BEstep}.
    
    \begin{defn}[GST method]
Let $\bfu_i^{\mathfrak{n}+1,0} = \bfu_i^{\mathfrak{n}}$ and $T_i^{\mathfrak{n}+1,0} = T_i^{\mathfrak{n}}$ be given.
For $\ell\in \{0,1,2,\cdots\}$, let $\bfu_i^{\mathfrak{n}+1,\ell+1}$ and $T_i^{\mathfrak{n}+1,\ell+1}$ be given by
    \begin{subequations}
    \label{eq:GSTstep}
    \begin{align}
    \label{eq:GSTstep_vel}
\rho_i\frac{\bfu_{i}^{\mathfrak{n}+1,\ell+1}-\bfu_i^{\mathfrak{n}}}{\dt}
  &= 
    \frac{1}{\veps}\sum_j
    \left(
      A_{i,j}^{\mathfrak{n}+1,\ell}
      -
      D_{i,j}^{\mathfrak{n}+1,\ell}
    \right)
    \bfu_{j}^{\mathfrak{n}+1,\ell+1}
,
    \\
    \label{eq:GSTstep_temp}
    \begin{split}
\rho_i\frac{s_i^{\mathfrak{n}+1,\ell+1}-s_i^{\mathfrak{n}}}{\dt}
+ dn_i \frac{T_i^{\mathfrak{n}+1,\ell+1}-T_i^{\mathfrak{n}}}{\dt}
  &= 
    \frac{d}{\veps}\sum_j
    \left(
      B_{i,j}^{\mathfrak{n}+1,\ell}
      -
      F_{i,j}^{\mathfrak{n}+1,\ell}
    \right)
    T_j^{\mathfrak{n}+1,\ell+1}
    \\
  & \hspace{-5cm}+ \frac{1}{\veps}\sum_jB_{i,j}^{\mathfrak{n}+1,\ell}
    \left[m_j
      \left(
        s_j^{\mathfrak{n}+1,\ell+1}-S_{i,j}^{\mathfrak{n}+1,\ell+1}
      \right)
      -m_i
      \left(
        s_i^{\mathfrak{n}+1,\ell+1}-S_{i,j}^{\mathfrak{n}+1,\ell+1}
      \right)
    \right]
.
    \end{split}
    \end{align}
    \end{subequations}
    \end{defn}

To simplify notation, we drop the superscript $\mathfrak{n}+1$, since it is clear that we are seeking the update at this time level.
Thus, instead of $\bfu_i^{\mathfrak{n}+1,\ell}$ we use $\bfu_i^{\ell}$, and, likewise, $T_i^{\mathfrak{n}+1,\ell}$ is replaced by $T_i^{\ell}$.
In particular, the initialization $\bfu_i^{\mathfrak{n}+1,0} \coloneqq \bfu_i^{\mathfrak{n}}$ is replaced by $\bfu_i^{\ell=0} \coloneqq \bfu_i^{\mathfrak{n}}$, and $T_i^{\mathfrak{n}+1,0} \coloneqq T_i^{\mathfrak{n}}$ is replaced by $T_i^{\ell=0} \coloneqq T_i^{\mathfrak{n}}$.
Furthermore, to facilitate the analysis of the GST method, we introduce the auxiliary variables
    \begin{equation}
\bfW
  =
    [\bfw_1,\cdots,\bfw_N]^ \top
  =
    P^{\sfrac{1}{2}}\bfU
\qquand
\bfeta
  =
    Q^{\sfrac{1}{2}}\bfT
,
    \end{equation}
where $P = \text{diag}\{\rho_k\} \in\bbR^{N\times N}$, $Q = \text{diag}\{n_k\}\in\bbR^{N\times N}$, $\bfU = [\bfu_1,\cdots,\bfu_N]^\top\in\bbR^{N\times d}$, and $\bfT = (T_1,\cdots,T_N)^\top\in\bbR^N$.

    \begin{prop}[GST Method for $\bfW$ and $\bfeta$]  
The iterates $\bfW^\ell = P^{\sfrac{1}{2}}\bfU^\ell$ and $\bfeta^\ell = Q^{\sfrac{1}{2}}\bfT^\ell$ satisfy
    \begin{subequations}
    \label{eq:GaussSeidelIterates}
    \begin{align}
    \label{eq:WUpdateIterative}
\left(
  I+\frac{\dt}{\veps}Z^\ell
\right)
\bfW^{\ell+1}
  &=
    \bfW^{\mathfrak{n}}
,
    \\
    \label{eq:ETAUpdateIterative}
\left(
  I+\frac{\dt}{\veps}\what{Z}^\ell
\right)
\bfeta^{\ell+1}
  &=
    \bfeta^{\mathfrak{n}}
    +
    \frac{1}{d}Q^{-\sfrac{1}{2}}\bfs^{(1),\ell+1}
    +
    \frac{\dt}{\veps}\frac{1}{d}Q^{-\sfrac{1}{2}}\bfs^{(2),\ell+1,\ell}
,
    \end{align}
    \end{subequations}
where the $N \times N$ matrices
    \begin{equation}
Z^\ell
  =
    P^{-\sfrac{1}{2}} (D^\ell-A^\ell) P^{-\sfrac{1}{2}}
\qquand
\what{Z}^\ell
  =
    Q^{-\sfrac{1}{2}} (F^\ell-B^\ell) Q^{-\sfrac{1}{2}}
    \end{equation}
are symmetric, and the source terms in \eqref{eq:ETAUpdateIterative} are given by
    \begin{subequations}
    \begin{align}
    \label{eq:source_term1}
\bfs^{(1),\ell+1}
  &=
    \bfs_{\bfW}^{\mathfrak{n}} - \bfs_{\bfW}^{\ell+1}
,\qquad \text{with }\qquad
\bfs_{\bfW}
  = 
    \left(
      |\bfw_1|^2,\cdots,|\bfw_N|^2
    \right)^\top
,
    \\
    \label{eq:source_term2}
\text{and }\qquad
\left[
  \bfs^{(2),\ell+1,\ell}
\right]_i
  &=
    \sum_jB_{i,j}^\ell
    \left[
      m_j(s_j^{\ell+1}-S_{i,j}^{\ell+1})
      -
      m_i(s_i^{\ell+1}-S_{i,j}^{\ell+1})
    \right]
.
    \end{align}
    \end{subequations}
    \end{prop}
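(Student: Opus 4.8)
The plan is to recognize that \eqref{eq:GaussSeidelIterates} is just the componentwise recursion \eqref{eq:GSTstep} rewritten in the conjugated variables $\bfW = P^{\sfrac{1}{2}}\bfU$ and $\bfeta = Q^{\sfrac{1}{2}}\bfT$, where $P = \mathrm{diag}\{\rho_k\}$ and $Q = \mathrm{diag}\{n_k\}$, so the proof is an exercise in organized linear algebra rather than anything deep. For \eqref{eq:WUpdateIterative} I would multiply \eqref{eq:GSTstep_vel} by $\dt$, move the implicit term to the left-hand side, and read off the matrix identity $P\bfU^{\ell+1} + \frac{\dt}{\veps}(D^\ell - A^\ell)\bfU^{\ell+1} = P\bfU^{\mfn}$. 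Left-multiplying by $P^{-\sfrac{1}{2}}$ and then replacing the remaining $\bfU^{\ell+1}$ in the middle term by $P^{-\sfrac{1}{2}}P^{\sfrac{1}{2}}\bfU^{\ell+1} = P^{-\sfrac{1}{2}}\bfW^{\ell+1}$ yields $(I + \frac{\dt}{\veps}Z^\ell)\bfW^{\ell+1} = \bfW^{\mfn}$ with $Z^\ell = P^{-\sfrac{1}{2}}(D^\ell - A^\ell)P^{-\sfrac{1}{2}}$.

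For the temperature update the steps are the same but the kinetic-energy terms require bookkeeping. I would multiply \eqref{eq:GSTstep_temp} by $\dt$ and divide by $d$, so that the left side reads $\frac{1}{d}\rho_i(s_i^{\ell+1} - s_i^{\mfn}) + n_i(T_i^{\ell+1} - T_i^{\mfn})$. The key observation is that $\bfw_i = \sqrt{\rho_i}\,\bfu_i$ gives $\rho_i s_i^{\ell} = \rho_i|\bfu_i^{\ell}|^2 = |\bfw_i^{\ell}|^2 = [\bfs_{\bfW}^{\ell}]_i$, hence $-\frac{1}{d}\rho_i(s_i^{\ell+1} - s_i^{\mfn}) = \frac{1}{d}\left([\bfs_{\bfW}^{\mfn}]_i - [\bfs_{\bfW}^{\ell+1}]_i\right) = \frac{1}{d}[\bfs^{(1),\ell+1}]_i$ by \eqref{eq:source_term1}, while the bracketed double sum on the right is, by definition \eqref{eq:source_term2}, equal to $\frac{1}{d}[\bfs^{(2),\ell+1,\ell}]_i$. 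Collecting the implicit temperature terms then gives $Q\bfT^{\ell+1} + \frac{\dt}{\veps}(F^\ell - B^\ell)\bfT^{\ell+1} = Q\bfT^{\mfn} + \frac{1}{d}\bfs^{(1),\ell+1} + \frac{\dt}{\veps}\frac{1}{d}\bfs^{(2),\ell+1,\ell}$, and left-multiplying by $Q^{-\sfrac{1}{2}}$ with the analogous insertion of $Q^{-\sfrac{1}{2}}Q^{\sfrac{1}{2}} = I$ produces \eqref{eq:ETAUpdateIterative}.

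Finally I would verify the symmetry claims: $A^\ell$ and $B^\ell$ are symmetric because, although $\lambda_{i,j} \neq \lambda_{j,i}$ in general, the numerators $\rho_i \rho_j \lambda_{i,j}\lambda_{j,i}$, $n_i n_j \lambda_{i,j}\lambda_{j,i}$ and the denominators $\rho_i\lambda_{i,j} + \rho_j\lambda_{j,i}$, $n_i\lambda_{i,j} + n_j\lambda_{j,i}$ are each invariant under $i \leftrightarrow j$; the matrices $D^\ell$ and $F^\ell$ are diagonal; and conjugation by the diagonal matrices $P^{-\sfrac{1}{2}}$, $Q^{-\sfrac{1}{2}}$ preserves symmetry. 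I do not expect a genuine obstacle, since the statement is an exact rewriting of \eqref{eq:GSTstep}; the only point that demands attention is keeping the iteration indices straight—$\ell$ on $A, B, D, F$ and on the frozen source $\bfs^{(2)}$, $\ell+1$ on the updated moments, and $\mfn$ on the data—and correctly matching the energy increments $\rho_i(s_i^{\ell+1} - s_i^{\mfn})$ to the defined quantity $\bfs^{(1),\ell+1}$.
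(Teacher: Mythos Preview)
Your proposal is correct and is exactly the routine linear-algebra rewriting that the proposition encodes; the paper in fact states this proposition without proof, treating \eqref{eq:GaussSeidelIterates} as a direct reformulation of \eqref{eq:GSTstep}, and your derivation fills in precisely those steps. The only minor remark is that your identification $\rho_i s_i = |\bfw_i|^2$ and the sign bookkeeping for $\bfs^{(1),\ell+1}$ are the one nontrivial point, and you handle them correctly.
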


    \begin{defn}
    \label{definition:A_ij_B_ij}
Let
    \begin{equation}
c_{i,j}^A
  =
    \frac{16}{3}\sqrt{\frac{\pi}{2}}
    \frac{m_im_j(d_i+d_j)^2}{(m_i+m_j)^3}\rho_i\rho_j
\qquand
c_{i,j}^B
  =
    \frac{8}{3}\sqrt{\frac{\pi}{2}}
    \frac{(d_i+d_j)^2}{(m_i+m_j)^2}\rho_i\rho_j
,
    \end{equation}
so that $A^{\mathfrak{n}}_{i,j} = c_{i,j}^A\Psi_{i,j}^{\mathfrak{n}}$ and $B^{\mathfrak{n}}_{i,j} = c_{i,j}^B\Psi_{i,j}^{\mathfrak{n}}$, with $\Psi$ as in \eqref{eq:Psi}.
Also define the bounds
    \begin{subequations}
    \begin{align}
A_{\min}^{\mathfrak{n}}
    &\coloneqq
\min_{i,j}A_{i,j}^{\mathfrak{n}}
    \leq
A_{i,j}^{\mathfrak{n}}
    \leq
\max_{i,j} A_{i,j}^{\mathfrak{n}}
    \eqqcolon
A_{\max}^{\mathfrak{n}}
,
    \\
B_{\min}^{\mathfrak{n}}
    &\coloneqq
\min_{i,j}B_{i,j}^{\mathfrak{n}}
    \leq
B_{i,j}^{\mathfrak{n}}
    \leq
\max_{i,j}B_{i,j}^{\mathfrak{n}}
    \eqqcolon
B_{\max}^{\mathfrak{n}}
.
    \end{align}
    \end{subequations}
Finally, define the following maximum and minimum values:
    \begin{subequations}
    \begin{gather}
m_{\min} \coloneqq \min\{m_k\}
,\qquad
n_{\min} \coloneqq \min\{n_k\}
,\qquad
\rho_{\min} \coloneqq \min\{\rho_k\}
,
    \\
m_{\max} \coloneqq \max\{m_k\}
,\qquad
n_{\max} \coloneqq \max\{n_k\}
,\qquad
\rho_{\max} \coloneqq \max\{\rho_k\}
.
    \end{gather}
    \end{subequations}
    \end{defn}
    
    \begin{prop}
    \label{prop:z_min_defs}
The eigenvalues of the matrices $Z^\ell=P^{-\sfrac{1}{2}}(D^\ell-A^\ell)P^{-\sfrac{1}{2}}$ and $\what{Z}^\ell=Q^{-\sfrac{1}{2}}(F^\ell-B^\ell)Q^{-\sfrac{1}{2}}$ satisfy
    \begin{subequations}
    \begin{align}
0 
  &=
    z_0
    < z_{\min}^{\mathfrak{n}}
    \leq z_1^\ell
    \leq \cdots
    \leq z_{N-1}^\ell
    \leq z_{\max}^\mathfrak{n}
,
    \\
0 
  &=
    \what{z}_0
    < \what{z}_{\min}^\mathfrak{n}
    \leq \what{z}_1^\ell
    \leq \cdots 
    \leq \what{z}_{N-1}^\ell
    \leq \what{z}_{\max}^\mathfrak{n}
,
    \end{align}    
    \end{subequations}
where
    \begin{equation}
z_{\min}^{\mathfrak{n}}
  = \frac{A_{\min}^{\mathfrak{n}}N}{\rho_{\max}}
    ,\qquad
z_{\max}^{\mathfrak{n}}
  = \frac{A_{\max}^{\mathfrak{n}}(N-1)}{\rho_{\min}} 
    ,\qquad
\what{z}_{\min}^{\mathfrak{n}}
  = \frac{B_{\min}^{\mathfrak{n}}N}{n_{\max}}
    ,\qquad
\what{z}_{\max}^{\mathfrak{n}}
  = \frac{B_{\max}^{\mathfrak{n}}(N-1)}{{n_{\min}}}
.
    \end{equation}
    \end{prop}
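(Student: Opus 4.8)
The plan rests on the weighted-graph-Laplacian structure of $D^\ell-A^\ell$ and $F^\ell-B^\ell$. Since $[D^\ell]_{i,i}=\sum_k A^\ell_{i,k}$, we get $[D^\ell-A^\ell]_{i,i}=\sum_{k\ne i}A^\ell_{i,k}$ and $[D^\ell-A^\ell]_{i,j}=-A^\ell_{i,j}$ for $i\ne j$, hence the identity $\bfy^\top(D^\ell-A^\ell)\bfy=\sum_{i<j}A^\ell_{i,j}(y_i-y_j)^2$ for all $\bfy\in\bbR^N$, and likewise with $B^\ell_{i,j}$ for $F^\ell-B^\ell$. In particular $(D^\ell-A^\ell)\mathbf{1}=0$, so $Z^\ell P^{\sfrac{1}{2}}\mathbf{1}=0$, and since every $A^\ell_{i,j}>0$ the weighted graph is connected, so $0$ is a simple eigenvalue of $D^\ell-A^\ell$. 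As $Z^\ell$ is symmetric (already noted) and congruent to $D^\ell-A^\ell$ via $P^{-\sfrac{1}{2}}$, Sylvester's law of inertia gives $Z^\ell\succeq 0$ with exactly one zero eigenvalue; this accounts for $z_0=0$ and its strict separation from $z_1^\ell$, and the analogous statements for $\what Z^\ell$ (kernel spanned by $Q^{\sfrac{1}{2}}\mathbf{1}$).

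For the quantitative bounds I would compare with the unweighted complete-graph Laplacian $L_{K_N}:=NI-\mathbf{1}\mathbf{1}^\top$, whose quadratic form is $\sum_{i<j}(y_i-y_j)^2$. Invoking the a priori bounds $A^{\mathfrak n}_{\min}\le A^\ell_{i,j}\le A^{\mathfrak n}_{\max}$ — which follow from monotonicity of $\Psi_{i,j}$ in each temperature together with the temperature iterates remaining within the range of $\{T^{\mathfrak n}_k\}$ — the identity above yields $A^{\mathfrak n}_{\min}L_{K_N}\preceq D^\ell-A^\ell\preceq A^{\mathfrak n}_{\max}L_{K_N}$, and conjugating by $P^{-\sfrac{1}{2}}$ gives $A^{\mathfrak n}_{\min}N_0\preceq Z^\ell\preceq A^{\mathfrak n}_{\max}N_0$ with the $\ell$-independent matrix $N_0:=P^{-\sfrac{1}{2}}L_{K_N}P^{-\sfrac{1}{2}}=NP^{-1}-(P^{-\sfrac{1}{2}}\mathbf{1})(P^{-\sfrac{1}{2}}\mathbf{1})^\top$. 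Weyl's monotonicity theorem then gives $A^{\mathfrak n}_{\min}\lambda_k(N_0)\le z_k^\ell\le A^{\mathfrak n}_{\max}\lambda_k(N_0)$ for every $k$, reducing everything to bounding the extreme nonzero eigenvalues of $N_0$. The largest follows at once from $N_0\preceq NP^{-1}\preceq(N/\rho_{\min})I$, giving $z_{N-1}^\ell\le N A^{\mathfrak n}_{\max}/\rho_{\min}$ (the equal-$\rho$ case shows $\lambda_{\max}(N_0)=N/\rho_{\min}$ is attained, so the precise constant in the stated upper bound is worth re-checking).

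The delicate step is the lower bound on $\lambda_1(N_0)$. Writing $\bfx=P^{\sfrac{1}{2}}\bfy$, the smallest nonzero eigenvalue of $Z^\ell$ equals the minimum of $\bfy^\top(D^\ell-A^\ell)\bfy/(\bfy^\top P\bfy)$ over $\bfy$ with $\sum_i\rho_i y_i=0$ (the $P$-orthogonal complement of $\mathbf{1}$), so by the comparison it suffices to show $\bfy^\top L_{K_N}\bfy\ge(N/\rho_{\max})\,\bfy^\top P\bfy$ there. A bound using only $\|\bfy\|^2$ loses a factor of $N$; instead, use the constraint to write $\mathbf{1}^\top\bfy=\rho_{\max}^{-1}\sum_i(\rho_{\max}-\rho_i)y_i$, then apply Cauchy--Schwarz, $(\mathbf{1}^\top\bfy)^2\le\rho_{\max}^{-2}\bigl(\sum_i(\rho_{\max}-\rho_i)\bigr)\bigl(\sum_i(\rho_{\max}-\rho_i)y_i^2\bigr)\le(N/\rho_{\max})\sum_i(\rho_{\max}-\rho_i)y_i^2$. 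Substituting into $\bfy^\top L_{K_N}\bfy=N\|\bfy\|^2-(\mathbf{1}^\top\bfy)^2$ collapses exactly to $\bfy^\top L_{K_N}\bfy\ge(N/\rho_{\max})\sum_i\rho_i y_i^2$, hence $z_1^\ell\ge N A^{\mathfrak n}_{\min}/\rho_{\max}=z^{\mathfrak n}_{\min}$.

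The statement for $\what Z^\ell$ is identical with $(A,P,\rho)$ replaced throughout by $(B,Q,n)$, using $\what Z^\ell=Q^{-\sfrac{1}{2}}(F^\ell-B^\ell)Q^{-\sfrac{1}{2}}$ and $B^{\mathfrak n}_{\min}\le B^\ell_{i,j}\le B^{\mathfrak n}_{\max}$. I expect the two real hurdles to be (i) establishing the uniform-in-$\ell$ a priori bounds on $A^\ell_{i,j}$ and $B^\ell_{i,j}$ — a discrete comparison principle for the coupled temperature update \eqref{eq:GSTstep_temp}, which is not transparent from its form — and (ii) the Cauchy--Schwarz step above, which is exactly what upgrades the naive bound $A^{\mathfrak n}_{\min}/\rho_{\max}$ to the sharp $N A^{\mathfrak n}_{\min}/\rho_{\max}$ by genuinely exploiting the $P$-orthogonality constraint.
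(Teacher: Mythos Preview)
Your graph-Laplacian comparison is exactly the right structure, and the paper's own proof is just a pointer to \cite[Appendix~B]{asymptoticRelaxation} together with the temperature lower bound \eqref{eq:tempBound}, so there is no detailed argument here to compare against. Your Cauchy--Schwarz step for the smallest nonzero eigenvalue is correct and clean: writing $\mathbf{1}^\top\bfy=\rho_{\max}^{-1}\sum_i(\rho_{\max}-\rho_i)y_i$ on the constraint set and applying Cauchy--Schwarz with weights $\rho_{\max}-\rho_i\ge 0$ does collapse $N\|\bfy\|_2^2-(\mathbf{1}^\top\bfy)^2$ exactly to $(N/\rho_{\max})\sum_i\rho_i y_i^2$, which is the inequality you need.

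One point to tighten. You invoke two-sided bounds ``temperature iterates remaining within the range of $\{T_k^{\mathfrak n}\}$'' to get $A_{\min}^{\mathfrak n}\le A_{i,j}^\ell\le A_{\max}^{\mathfrak n}$, but the paper only establishes the lower bound $T_i^{\ell}\ge T_{\min}^{\mathfrak n}$ (\Cref{prop:GST_properties}, \eqref{eq:tempBound}); no analogue of \eqref{eq:velocityBounds} holds for temperatures, since the sources in \eqref{eq:GSTstep_temp} are one-signed (\Cref{lemma:TempSourcesPositive}). An $\ell$-uniform upper bound on $T_i^\ell$ does follow, but from total-energy conservation (\Cref{prop:GST_conservation_laws}), not from a maximum principle, and it is not $\max_k T_k^{\mathfrak n}$. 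So your comparison $A_{\min}^{\mathfrak n}L_{K_N}\preceq D^\ell-A^\ell\preceq A_{\max}^{\mathfrak n}L_{K_N}$ stands as a strategy, but the specific constants $A_{\min}^{\mathfrak n},A_{\max}^{\mathfrak n}$ of \Cref{definition:A_ij_B_ij} are not the ones that the temperature bounds actually deliver; you should expect slightly different $\ell$-independent constants. This is harmless for the paper's purposes, since only $z_{\min}^{\mathfrak n}$ and $\what z_{\min}^{\mathfrak n}$ are ever used downstream (in \Cref{lemma:inverse_norms} and \Cref{lemma:MHat}). Your skepticism about the $(N-1)$ in $z_{\max}^{\mathfrak n}$ is also well placed: in the symmetric case $\rho_i\equiv\rho$, $A_{i,j}\equiv A$ one gets $z_{N-1}^\ell=NA/\rho>(N-1)A/\rho$, so the stated upper bound cannot hold as written; again this does not affect anything that follows.
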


    \begin{proof}
The proof of this result follows as in \cite[Appendix B]{asymptoticRelaxation}, and uses \eqref{eq:tempBound}.
    \end{proof}
    
    \begin{defn}
    \label{defn:CauchyDelta}
For any quantity $K$ that depends on $\ell$, define the difference operator $\Delp$ by $\Delp{K} \equiv \Delp{(K)} = K^{\ell+1} - K^\ell$.
    \end{defn}
The main result of this paper is the following theorem.

    \begin{theorem}
    \label{theorem:mainTheorem_Cauchy}
The differences $\Delp{\bfW}$ and $\Delp{\bfeta}$, defined according to \eqref{eq:GaussSeidelIterates}, satisfy
    \begin{subequations}
    \begin{align}
    \label{eq:main_theorem_W}
\froNorm{\Delp{\bfW}}
  &\leq
    C_{\bfW}^{\mathfrak{n}}\Gamma_{\bfW}^\mathfrak{n}\|\Delm{\bfeta}\|_2
,
    \\
    \label{eq:main_theorem_eta}
\|\Delp{\bfeta}\|_2
  &\leq
    C_0^{\mathfrak{n}}\Gamma_{\bfW}^\mathfrak{n}\|\Delm{\bfeta}\|_2 
    +
    C_1^{\mathfrak{n}}\Gamma_{\bfX}\|\Delm{\bfeta}\|_2
,
    \end{align}     
\end{subequations}
where the positive constants $C_{\bfW}^\mathfrak{n}$, $C_0^\mathfrak{n}$, and $C_1^\mathfrak{n}$ depend only on $\left\{N,m_k,d_k,n_k,\bfW^{\mathfrak{n}},\bfeta^{\mathfrak{n}}\right\}$;
    \begin{equation}
\Gamma_{\bfW}^\mathfrak{n}
  =
    \frac{\dt\veps}{(\veps+\dt z_{\min}^\mathfrak{n})^2}
,\qquad
\Gamma_{\bfeta}^\mathfrak{n}
  =
    \frac{\dt\veps}{(\veps+\dt\what{z}_{\min}^\mathfrak{n})^2}
,\qquand
\Gamma_{\bfX}^\mathfrak{n}
  =
    \max\{\Gamma_{\bfW}^\mathfrak{n},\Gamma_{\bfeta}^\mathfrak{n}\}
;
    \end{equation}
and $z_{\min}^\mathfrak{n}$ and $\what{z}_{\min}^\mathfrak{n}$ are defined in \Cref{prop:z_min_defs}.
    \end{theorem}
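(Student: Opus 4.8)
The plan is to establish the two inequalities separately.

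For \eqref{eq:main_theorem_W}: from \eqref{eq:WUpdateIterative}, $\bfW^{\ell+1} = (I+\tfrac{\dt}{\veps}Z^\ell)^{-1}\bfW^{\mathfrak{n}}$ and likewise for $\bfW^\ell$. First I would note that $P^{\sfrac{1}{2}}\mathbf{1}$ lies in the kernel of every $Z^\ell$ (the rows of $D^\ell-A^\ell$ sum to zero), so by \Cref{prop:z_min_defs} this kernel is exactly $\mathrm{span}(P^{\sfrac{1}{2}}\mathbf{1})$, independent of $\ell$. Decomposing $\bfW^{\mathfrak{n}} = \bfW^{\mathfrak{n}}_\parallel + \bfW^{\mathfrak{n}}_\perp$ along and orthogonal to $P^{\sfrac{1}{2}}\mathbf{1}$, the parallel part is fixed by each resolvent and cancels in $\Delp{\bfW}$, so that the resolvent identity gives
\[
\Delp{\bfW}
  = \big(I+\tfrac{\dt}{\veps}Z^\ell\big)^{-1}\,\tfrac{\dt}{\veps}\,(Z^{\ell-1}-Z^\ell)\,\big(I+\tfrac{\dt}{\veps}Z^{\ell-1}\big)^{-1}\bfW^{\mathfrak{n}}_\perp .
\]
Since $Z^{\ell-1}-Z^\ell$ is symmetric with $P^{\sfrac{1}{2}}\mathbf{1}$ in its kernel, both resolvents act on $\mathrm{span}(P^{\sfrac{1}{2}}\mathbf{1})^\perp$ with operator norm at most $\veps/(\veps+\dt z_{\min}^{\mathfrak{n}})$; together with the explicit $\dt/\veps$ this produces the prefactor $\Gamma_{\bfW}^{\mathfrak{n}}$. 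What remains is $\|Z^{\ell-1}-Z^\ell\|_{\mathrm{op}} \le C\|\Delm{\bfeta}\|_2$: each entry of $Z^\ell$ is a fixed rational function of $A^\ell_{i,j}=c^A_{i,j}\Psi^\ell_{i,j}$, the map $\Psi_{i,j}$ is Lipschitz in $(T_i,T_j)$ wherever the temperatures are bounded below — this lower bound is \eqref{eq:tempBound} — and $[\Delm{\bfeta}]_i = n_i^{\sfrac{1}{2}}\Delm{T_i}$. Absorbing $\froNorm{\bfW^{\mathfrak{n}}_\perp}$ and the Lipschitz constants into $C_{\bfW}^{\mathfrak{n}}$ finishes \eqref{eq:main_theorem_W}.

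For \eqref{eq:main_theorem_eta}: subtracting \eqref{eq:ETAUpdateIterative} at step $\ell-1\!\to\!\ell$ from the same at step $\ell\!\to\!\ell+1$ gives
\[
\big(I+\tfrac{\dt}{\veps}\what{Z}^\ell\big)\Delp{\bfeta}
  = -\tfrac{\dt}{\veps}(\what{Z}^\ell-\what{Z}^{\ell-1})\bfeta^\ell
    + \tfrac1d Q^{-\sfrac{1}{2}}\big(\bfs^{(1),\ell+1}-\bfs^{(1),\ell}\big)
    + \tfrac{\dt}{\veps}\tfrac1d Q^{-\sfrac{1}{2}}\big(\bfs^{(2),\ell+1,\ell}-\bfs^{(2),\ell,\ell-1}\big),
\]
and $\Delp{\bfeta}$ is obtained by applying $(I+\tfrac{\dt}{\veps}\what{Z}^\ell)^{-1}$. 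The middle term is Lipschitz in $\bfW$ only (note $\froNorm{\bfW^\ell}\le\froNorm{\bfW^{\mathfrak{n}}}$ since the velocity resolvent is a contraction), hence $\mathcal{O}(\froNorm{\Delp{\bfW}})$, and by \eqref{eq:main_theorem_W} it contributes the $C_0^{\mathfrak{n}}\Gamma_{\bfW}^{\mathfrak{n}}$ term; the part of the last term coming from the change $\bfW^{\ell+1}\!\to\!\bfW^\ell$ inside $\bfs^{(2)}$ is handled the same way. The remaining pieces — $-\tfrac{\dt}{\veps}(\what{Z}^\ell-\what{Z}^{\ell-1})\bfeta^\ell$ and the $B^{\ell-1}\!\to\!B^\ell$ part of $\bfs^{(2),\ell+1,\ell}-\bfs^{(2),\ell,\ell-1}$ — must be treated jointly. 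Here I would use that $Q^{-\sfrac{1}{2}}\bfs^{(2)}$ is orthogonal to $Q^{\sfrac{1}{2}}\mathbf{1}$ (the symmetries of $B$ and $S$ force $\sum_i[\bfs^{(2)}]_i=0$) and that $(\what{Z}^\ell-\what{Z}^{\ell-1})\bfeta^\ell$ is as well, so the outer resolvent acts on both with norm $\le \veps/(\veps+\dt\what{z}_{\min}^{\mathfrak{n}})$; combined with uniform bounds on $\bfeta^\ell$, $\bfeta^\ell_\perp$, and $\bfs^{(2)}$ (again from the contraction of the velocity resolvent and the $Q^{\sfrac{1}{2}}\mathbf{1}$-orthogonality of $\bfs^{(2)}$), and with the algebraic structure behind the derivation of \eqref{eq:BEstep} that ties the heating combination $m_j(s_j-S_{i,j})-m_i(s_i-S_{i,j})$ to the mixture temperatures, the $\dt/\veps$-stiffness in these two pieces reorganizes so that their sum is $\mathcal{O}(\Gamma_{\bfX}^{\mathfrak{n}}\|\Delm{\bfeta}\|_2)$ rather than merely $\mathcal{O}(\|\Delm{\bfeta}\|_2)$. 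This gives the $C_1^{\mathfrak{n}}\Gamma_{\bfX}^{\mathfrak{n}}$ term; all constants depend only on $N$, the fixed $m_k,d_k,n_k$, and on $\bfW^{\mathfrak{n}},\bfeta^{\mathfrak{n}}$ through the iterate bounds and the temperature floor.

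The main obstacle is precisely this last step. Estimated naively, $-\tfrac{\dt}{\veps}(\what{Z}^\ell-\what{Z}^{\ell-1})\bfeta^\ell$ and the $B$-variation of $\bfs^{(2)}$ each yield a bound with an $\mathcal{O}(1)$ — not $\mathcal{O}(\Gamma_{\bfX}^{\mathfrak{n}})$ — coefficient on $\|\Delm{\bfeta}\|_2$, which would fail to be summable in the stiff regime $\dt/\veps\gg1$; recovering the extra factor $\veps/(\veps+\dt\what{z}_{\min}^{\mathfrak{n}})$ requires combining the two pieces via the structural identity above together with their common orthogonality to $Q^{\sfrac{1}{2}}\mathbf{1}$, so that the stiff factor is absorbed by resolvent smoothing exactly one more time than a term-by-term count suggests. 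Propagating a positive, $\ell$-uniform lower bound on the temperatures — so that $\Psi_{i,j}$ remains Lipschitz, as invoked through \eqref{eq:tempBound} — is the other point that needs care.
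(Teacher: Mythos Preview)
Your treatment of \eqref{eq:main_theorem_W} is essentially the paper's: split along and orthogonal to the one-dimensional kernel $\mathrm{span}(P^{\sfrac{1}{2}}\mathbf{1})$, apply the resolvent identity, and control $\|\Delm{Z}\|_2$ by $\|\Delm{\bfeta}\|_2$ through the Lipschitz estimate on $\Psi_{i,j}$ (valid thanks to the temperature floor \eqref{eq:tempBound}).  That part is fine.

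The gap is in \eqref{eq:main_theorem_eta}.  You correctly isolate the two stiff pieces --- the $\what{Z}$-variation $-\tfrac{\dt}{\veps}(\what{Z}^\ell-\what{Z}^{\ell-1})\bfeta^\ell$ and the $B$-variation inside $\bfs^{(2)}$ --- and correctly see that a crude bound on each loses the required factor.  But your proposed fix, combining them through an unspecified ``structural identity'' tying the heating term to mixture temperatures, is not what closes the argument, and it is not clear such a cancellation exists.  The paper handles each piece \emph{separately}, and in both cases the missing smallness comes from the \emph{velocity} resolvent:
\begin{itemize}
\item For the $\what{Z}$-variation, write the range-space iterate $\bfb^{\ell+1}=\what{M}^\ell\what{V}^\top[\bfeta^{\mathfrak{n}}+\cdots]$ and take differences at that level, so that the relevant term is $\Delm{\what{M}}$ applied to \emph{fixed} data.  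The identity $\Delm{\what{M}}=-\what{M}^\ell\tfrac{\dt}{\veps}\what{V}^\top(\Delm{\what{Z}})\what{V}\what{M}^{\ell-1}$ then supplies \emph{two} factors of $\what{\gamma}^{\mathfrak{n}}$, hence $\Gamma_{\bfeta}^{\mathfrak{n}}$.  In your formulation this is equivalent to noting that $(\what{Z}^\ell-\what{Z}^{\ell-1})\bfeta^\ell=(\what{Z}^\ell-\what{Z}^{\ell-1})\bfeta^\ell_\perp$ and that $\|\bfeta^\ell_\perp\|_2=\mathcal{O}(\what{\gamma}^{\mathfrak{n}})$ --- but to see the latter you need that the $\tfrac{\dt}{\veps}\bfs^{(2)}$ contribution inside the bracket is bounded, i.e.\ $\|\bfs^{(2),\ell,\ell-1}\|_2=\mathcal{O}(\gamma^{\mathfrak{n}})$.
\item For the $B$-variation of $\bfs^{(2)}$, the point you are missing is that the factors $\xi_{i,j}^\ell=m_j(s_j^\ell-S_{i,j}^\ell)-m_i(s_i^\ell-S_{i,j}^\ell)$ are themselves $\mathcal{O}(\gamma^{\mathfrak{n}})$, because $|s_i^\ell-S_{i,j}^\ell|$ is controlled by $\|\bfu_i^\ell-\bfu^\infty\|_2\le \gamma^{\mathfrak{n}}\froNorm{\bfW^{\mathfrak{n}}}/\rho_{\min}^{\sfrac{1}{2}}$, the range-space velocity bound.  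Thus $\sum_j|\Delm{(B_{i,j})}|\,|\xi_{i,j}^\ell|$ already carries a factor $\gamma^{\mathfrak{n}}\|\Delm{\bfeta}\|_2$, and after multiplication by $\tfrac{\dt}{\veps}\what{\gamma}^{\mathfrak{n}}$ one obtains $(\Gamma_{\bfW}^{\mathfrak{n}}\Gamma_{\bfeta}^{\mathfrak{n}})^{\sfrac{1}{2}}\le\Gamma_{\bfX}^{\mathfrak{n}}$.
\end{itemize}
So the extra factor of $\veps/(\veps+\dt z)$ is not recovered by combining the two temperature pieces; it is supplied, term by term, by the smallness of $\|\bfu_i^\ell-\bfu^\infty\|_2$ and hence of $\bfs^{(2)}$.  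Your outline invokes only ``uniform bounds'' on $\bfeta^\ell_\perp$ and $\bfs^{(2)}$ rather than their $\mathcal{O}(\gamma^{\mathfrak{n}})$ size, and that is precisely the lemma that makes the temperature estimate close.
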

As a corollary, we obtain the following sufficient condition on the time step to ensure the method is convergent.
Importantly, this condition does not depend on $\veps$.

    \begin{corollary}
    \label{theorem:mainTheorem_TimeStep}
Given $\dt_0>0$, there exists $\dt_1$ such that (i) $C\dt_0\leq\dt_1\leq\dt_0$ for a constant $C \in (0,1]$ that is independent of $\veps$, and (ii) for this $\dt_1$,
    \begin{equation}
C_{\bfW}^\mathfrak{n}\Gamma_{\bfW}^\mathfrak{n}
  + C_0^\mathfrak{n}\Gamma_{\bfW}^\mathfrak{n}
  + C_1^\mathfrak{n}\Gamma_{\bfX}^\mathfrak{n}
  <1
,
    \end{equation}
thus ensuring convergence of the GST method.
    \end{corollary}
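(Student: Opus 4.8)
The plan is to reduce the corollary to an elementary statement about scalar unimodal functions, the key observation being that each gain factor depends on $\dt$ and $\veps$ only through the ratio $x \coloneqq \dt/\veps$. Writing $z \coloneqq z_{\min}^{\mathfrak{n}}$, one has $\Gamma_{\bfW}^{\mathfrak{n}}(\dt) = \dt\veps/(\veps+\dt z)^2 = x/(1+xz)^2 =: g_{\bfW}(x)$, and likewise $\Gamma_{\bfeta}^{\mathfrak{n}}(\dt) = g_{\bfeta}(x)$ with $g_{\bfeta}(x) = x/(1+x\what z_{\min}^{\mathfrak{n}})^2$. A one-line computation (equivalently, $(\veps+\dt z)^2 \ge 4\veps\dt z$) shows $g_{\bfW}$ is continuous, vanishes at $x=0$, increases on $[0,1/z]$, decreases on $[1/z,\infty)$, and has maximum $g_{\bfW}(1/z)=1/(4z)=1/(4z_{\min}^{\mathfrak{n}})$; the same holds for $g_{\bfeta}$ with $z$ replaced by $\what z_{\min}^{\mathfrak{n}}$. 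Consequently $\Gamma_{\bfX}^{\mathfrak{n}}(\dt)=\max\{g_{\bfW},g_{\bfeta}\}(\dt/\veps)$ is continuous in $\dt$, vanishes at $\dt=0$, is bounded by $\max\{1/(4z_{\min}^{\mathfrak{n}}),1/(4\what z_{\min}^{\mathfrak{n}})\}$, and for any threshold $\delta>0$ the set $\{x:\max\{g_{\bfW},g_{\bfeta}\}(x)\ge\delta\}$ is a union of at most two closed bounded intervals whose endpoints depend only on $\delta$, $z_{\min}^{\mathfrak{n}}$, and $\what z_{\min}^{\mathfrak{n}}$ — and in particular not on $\veps$. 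This $\veps$-independence of the ``bad'' range of $\dt/\veps$ is the heart of the matter.

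Next I would set $M^{\mathfrak{n}} \coloneqq C_{\bfW}^{\mathfrak{n}}+C_0^{\mathfrak{n}}+C_1^{\mathfrak{n}}$ and $\delta^{\mathfrak{n}} \coloneqq 1/M^{\mathfrak{n}}$. Since $\Gamma_{\bfW}^{\mathfrak{n}}\le\Gamma_{\bfX}^{\mathfrak{n}}$, the quantity in the corollary is at most $M^{\mathfrak{n}}\Gamma_{\bfX}^{\mathfrak{n}}$, so it suffices to produce $\dt_1\in(0,\dt_0]$ with $\Gamma_{\bfX}^{\mathfrak{n}}(\dt_1)<\delta^{\mathfrak{n}}$. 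If $\Gamma_{\bfX}^{\mathfrak{n}}(\dt_0)<\delta^{\mathfrak{n}}$ — which is automatic when $\delta^{\mathfrak{n}}>\max\{1/(4z_{\min}^{\mathfrak{n}}),1/(4\what z_{\min}^{\mathfrak{n}})\}$ — then $\dt_1=\dt_0$ and $C=1$ work. Otherwise the set $\mathcal{B}\coloneqq\{\dt>0:\Gamma_{\bfX}^{\mathfrak{n}}(\dt)\ge\delta^{\mathfrak{n}}\}$ is nonempty, and by the previous paragraph its span in the variable $\dt/\veps$ is a finite interval $[x^{\ast},x^{\ast\ast}]$ with $0<x^{\ast}\le x^{\ast\ast}<\infty$ depending only on $\delta^{\mathfrak{n}},z_{\min}^{\mathfrak{n}},\what z_{\min}^{\mathfrak{n}}$, hence only on $\{N,m_k,d_k,n_k,\bfW^{\mathfrak{n}},\bfeta^{\mathfrak{n}}\}$. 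Since $\dt_0\in\mathcal B$ in this case, $\veps x^{\ast}\le\dt_0\le\veps x^{\ast\ast}$.

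I would then take $\dt_1 \coloneqq \tfrac12\veps x^{\ast}$. Because $\dt_1/\veps<x^{\ast}$ lies strictly below the span of $\mathcal B$, both $g_{\bfW}(\dt_1/\veps)<\delta^{\mathfrak{n}}$ and $g_{\bfeta}(\dt_1/\veps)<\delta^{\mathfrak{n}}$, so $\Gamma_{\bfX}^{\mathfrak{n}}(\dt_1)<\delta^{\mathfrak{n}}$ and the quantity in the corollary is $<M^{\mathfrak{n}}\delta^{\mathfrak{n}}=1$. Moreover $\dt_1=\tfrac12\veps x^{\ast}<\veps x^{\ast}\le\dt_0$, and $\dt_1/\dt_0\ge(\tfrac12\veps x^{\ast})/(\veps x^{\ast\ast})=x^{\ast}/(2x^{\ast\ast})=:C\in(0,\tfrac12]$, a constant independent of $\veps$, so $C\dt_0\le\dt_1\le\dt_0$. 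Finally, with this $\dt_1$, \Cref{theorem:mainTheorem_Cauchy} gives $\|\Delp{\bfeta}\|_2\le\kappa\,\|\Delm{\bfeta}\|_2$ with $\kappa=C_0^{\mathfrak{n}}\Gamma_{\bfW}^{\mathfrak{n}}+C_1^{\mathfrak{n}}\Gamma_{\bfX}^{\mathfrak{n}}<1$, so $\{\bfeta^{\ell}\}$ is Cauchy; substituting this into \eqref{eq:main_theorem_W} makes $\froNorm{\Delp{\bfW}}$ summable, so $\{\bfW^{\ell}\}$ is Cauchy as well. Passing to the limit in \eqref{eq:GaussSeidelIterates} identifies the common limit with the solution of \eqref{eq:BEstep}, giving convergence of the GST method.

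The only step that is not routine bookkeeping is the first paragraph: recognizing the change of variables $x=\dt/\veps$ under which all three gain factors become fixed unimodal functions, so that the thresholds bounding the ``bad'' interval of $\dt/\veps$ are $\veps$-independent. Everything afterward is elementary; a small amount of care is needed in the degenerate sub-cases where $\delta^{\mathfrak{n}}$ exceeds the maximum of $g_{\bfW}$ or of $g_{\bfeta}$ (so that one of the two constituent intervals of $\mathcal B$ is empty), but these are harmless — either $\mathcal B$ is then governed by the other function alone, or $\mathcal B=\emptyset$ and the trivial case $\dt_1=\dt_0$ applies.
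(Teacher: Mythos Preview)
Your argument is correct and mirrors the paper's: both reduce to the observation that $\Gamma_{\bfX}^{\mathfrak{n}}$ depends on $(\dt,\veps)$ only through $x=\dt/\veps$, so the ``bad'' range of $\dt$ is an $\veps$-proportional interval whose endpoint ratio is $\veps$-independent (the paper expresses this via the roots $\dt_\pm$ of the quadratic $Q_r$ in \eqref{eq:Q_rDefinition} and takes $\dt_1=\dt_-$, $C=\dt_-/\dt_+$). Note that your two-interval caution is unnecessary: since $z\mapsto x/(1+xz)^2$ is decreasing in $z$ for $x>0$, one has $\max\{g_{\bfW},g_{\bfeta}\}(x) = x/(1+x\min\{z_{\min}^{\mathfrak{n}},\what z_{\min}^{\mathfrak{n}}\})^2$, a single unimodal function, which is exactly the $\Gamma_{\bfX}^{\mathfrak{n}}$ the paper uses in \Cref{theorem:CauchyErrorsX}.
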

Most of the remainder of the paper is dedicated to the proof of \Cref{theorem:mainTheorem_Cauchy}.
In the next subsection, we collect some useful properties of the method.
\Cref{section:velocityProof} is dedicated to proving the bound in \eqref{eq:main_theorem_W}, and \Cref{section:temperatureProof} is dedicated to proving the bound in \eqref{eq:main_theorem_eta}; see \Cref{theorem:VelocityError,theorem:TemperatureError}, respectively.
The proof of \Cref{theorem:mainTheorem_TimeStep} is contained in \Cref{section:timeStepSelection}.

    \subsection{Basic properties of the GST method}

    \begin{prop}
    \label{prop:GST_conservation_laws}
The iterates of the GST method satisfy the discrete version of the conservation of total momentum and total energy:
    \begin{equation}
\sum_i\rho_i\bfu_i^{\ell+1}
  =
    \sum_i\rho_i\bfu_i^{\mathfrak{n}}
\qquand
\sum_iE_i^{\ell+1}
  = 
    \sum_iE_i^{\mathfrak{n}}
.
    \end{equation}
    \end{prop}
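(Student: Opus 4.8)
The plan is to mimic the proof of \Cref{prop:discreteConsLaws} for the backward Euler system; the only genuinely new point is to check that the symmetries exploited there survive the lagging of the collision frequencies in the GST iteration. Throughout I will use that $A_{i,j}^{\ell}$ and $B_{i,j}^{\ell}$ are symmetric in $i \leftrightarrow j$ (the formulas in \Cref{definition:A_ij_B_ij} apply the lag identically to $\lambda_{i,j}$ and $\lambda_{j,i}$, so no asymmetry is introduced), that $D^{\ell}$ and $F^{\ell}$ are the corresponding row-sum diagonal matrices, and that $S_{i,j}^{\ell+1} = |\bfu_{i,j}^{\ell+1}|^2$ is symmetric as well, since $\bfu_{i,j}$ in \eqref{eq:MomentDefs} is symmetric in $i$ and $j$ no matter which iterate the frequencies are frozen at.

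For momentum, I would sum the velocity update \eqref{eq:GSTstep_vel} over all species $i$. The left side collapses to $\sum_i \rho_i(\bfu_i^{\ell+1}-\bfu_i^{\mathfrak{n}})/\dt$, so it suffices to show the right side vanishes. Using the definition of $D^{\ell}$ and relabeling, $\sum_{i,j} D_{i,j}^{\ell}\bfu_j^{\ell+1} = \sum_{i,j} A_{i,j}^{\ell}\bfu_i^{\ell+1}$, hence $\sum_{i,j}(A_{i,j}^{\ell}-D_{i,j}^{\ell})\bfu_j^{\ell+1} = \sum_{i,j} A_{i,j}^{\ell}(\bfu_j^{\ell+1}-\bfu_i^{\ell+1})$, which equals its own negative after swapping $i \leftrightarrow j$ and invoking $A_{i,j}^{\ell}=A_{j,i}^{\ell}$, and is therefore zero. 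This gives $\sum_i \rho_i \bfu_i^{\ell+1} = \sum_i \rho_i\bfu_i^{\mathfrak{n}}$ for every $\ell$ directly, with no induction required.

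For energy, recall $E_i = \tfrac12\rho_i s_i + \tfrac{d}{2}n_i T_i$ with $s_i = |\bfu_i|^2$, so summing the temperature update \eqref{eq:GSTstep_temp} over $i$ produces exactly $\tfrac{2}{\dt}\sum_i(E_i^{\ell+1}-E_i^{\mathfrak{n}})$ on the left. On the right, the term $\tfrac{d}{\veps}\sum_{i,j}(B_{i,j}^{\ell}-F_{i,j}^{\ell})T_j^{\ell+1}$ vanishes by the argument just used, with $B^{\ell}$ symmetric and $F^{\ell}$ its row-sum diagonal. For the remaining term $\tfrac{1}{\veps}\sum_{i,j}B_{i,j}^{\ell}\big[m_j(s_j^{\ell+1}-S_{i,j}^{\ell+1})-m_i(s_i^{\ell+1}-S_{i,j}^{\ell+1})\big]$, I would observe that the bracketed factor is antisymmetric under $i \leftrightarrow j$ while $B_{i,j}^{\ell}$ and $S_{i,j}^{\ell+1}$ are symmetric; hence the summand is antisymmetric and the double sum is zero. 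It follows that $\sum_i E_i^{\ell+1} = \sum_i E_i^{\mathfrak{n}}$.

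I do not anticipate any real obstacle: the computation is essentially the one already carried out for \Cref{prop:discreteConsLaws}. The only points needing care are confirming that $A^{\ell}$, $B^{\ell}$, and $S^{\ell+1}$ stay symmetric once $\lambda_{i,j}$ is evaluated at a GST iterate rather than the exact new time level, and that $S_{i,j}^{\ell+1}$ indeed drops out of the antisymmetric combination — both immediate from the explicit formulas. (One could alternatively argue by induction starting from $\bfu_i^{0}=\bfu_i^{\mathfrak{n}}$, $T_i^{0}=T_i^{\mathfrak{n}}$, but the direct argument already delivers the stated identities at every iterate $\ell$.)
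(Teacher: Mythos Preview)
Your proposal is correct and follows exactly the route the paper takes: the paper's own proof is a one-line reference back to \Cref{prop:discreteConsLaws}, noting that the argument carries over with $\mathfrak{n}+1$ replaced by the iterate index $\ell$. Your more detailed write-up simply unpacks that reference, and the symmetry checks you flag (for $A^{\ell}$, $B^{\ell}$, and $S^{\ell+1}$) are precisely the points that make the transfer go through.
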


    \begin{proof}
The proof follows as in \Cref{prop:discreteConsLaws}, with  $\mathfrak{n}+1$ replaced by $\ell$.
    \end{proof}
    
    \begin{prop}
    \label{prop:GST_properties}
The velocity and temperature iterates satisfy the bounds
    \begin{align}
    \label{eq:velocityBounds}
&\min_j\{u_{j,k}^{\mathfrak{n}}\}
  \leq
    \min_j\{u_{j,k}^{\ell+1}\}
  \leq
    \max_j\{u_{j,k}^{\ell+1}\}
  \leq
    \max_j\{u_{j,k}^{\mathfrak{n}}\}
,\qquad k\in\{1,\cdots,d\}
,
    \\
    \label{eq:tempBound}
&T_i^{\ell+1}
  \geq
    \min_j\{T_j^{\mathfrak{n}}\}
  \eqqcolon T_{\min}^{\mathfrak{n}}
,\qquad i\in\{1,\cdots,N\}
,
    \\
    \label{eq:u_max_def}
&\|\bfu_i^{\ell+1}\|_2
  \leq
    \|\bfu_{\max}^{\mathfrak{n}}\|_2
  \eqqcolon 
    u_{\max}^{\mathfrak{n}}
,\qquad i\in\{1,\cdots,N\}
,
    \end{align}
where $\left[\bfu_{\max}^{\mathfrak{n}}\right]_k = \max_j\{|u_{j,k}^{\mathfrak{n}}|\}$, for $k\in\{1,\cdots,d\}$.
    \end{prop}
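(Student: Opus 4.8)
The plan is to prove all three bounds via discrete extremum principles for the two linear solves in \eqref{eq:GSTstep}, doing the velocity bounds first (the $\ell^2$ bound then follows at once) and the temperature bound last, since the latter must borrow information from the velocity update. Throughout I would proceed by induction on $\ell$: if $T_i^\ell \geq T_{\min}^{\mathfrak{n}}$ — automatic at $\ell=0$ since $T_i^0 = T_i^{\mathfrak{n}}$ — then by \eqref{eq:Psi} and \Cref{definition:A_ij_B_ij} the entries $A_{i,j}^\ell$ and $B_{i,j}^\ell$ are nonnegative and symmetric, so both systems are well posed (the velocity system because $D^\ell-A^\ell$ is a symmetric graph Laplacian, so $I+\frac{\dt}{\veps}Z^\ell$ is positive definite; the temperature system because its coefficient matrix is strictly diagonally dominant).

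\emph{Velocity bounds.} Using $D_{i,j}^\ell = \big(\sum_k A_{i,k}^\ell\big)\delta_{i,j}$ to absorb the diagonal, \eqref{eq:GSTstep_vel} reads
\begin{equation*}
\Big(\rho_i + \frac{\dt}{\veps}\sum_j A_{i,j}^\ell\Big)\bfu_i^{\ell+1} = \rho_i\bfu_i^{\mathfrak{n}} + \frac{\dt}{\veps}\sum_j A_{i,j}^\ell\,\bfu_j^{\ell+1}.
\end{equation*}
Fix a component $k$ and let $i^\star$ attain $\max_i u_{i,k}^{\ell+1}$; since $A_{i^\star,j}^\ell\geq 0$ and $u_{j,k}^{\ell+1}\leq u_{i^\star,k}^{\ell+1}$, the $k$th component of the identity above gives $\rho_{i^\star}u_{i^\star,k}^{\ell+1}\leq\rho_{i^\star}u_{i^\star,k}^{\mathfrak{n}}$, hence $\max_j u_{j,k}^{\ell+1} = u_{i^\star,k}^{\ell+1}\leq u_{i^\star,k}^{\mathfrak{n}}\leq\max_j u_{j,k}^{\mathfrak{n}}$; taking $i^\star$ to attain the minimum gives the other inequality, which is \eqref{eq:velocityBounds}. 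Then $|u_{i,k}^{\ell+1}|\leq\max_j|u_{j,k}^{\mathfrak{n}}| = [\bfu_{\max}^{\mathfrak{n}}]_k$ for each $k$, so $\|\bfu_i^{\ell+1}\|_2\leq\|\bfu_{\max}^{\mathfrak{n}}\|_2 = u_{\max}^{\mathfrak{n}}$, which is \eqref{eq:u_max_def}.

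\emph{Temperature bound.} Using $F_{i,j}^\ell = \big(\sum_k B_{i,k}^\ell\big)\delta_{i,j}$ and collecting the $T_i^{\ell+1}$ terms, \eqref{eq:GSTstep_temp} becomes
\begin{equation*}
\Big(dn_i + \frac{d\dt}{\veps}\sum_j B_{i,j}^\ell\Big)T_i^{\ell+1} = dn_i\,T_i^{\mathfrak{n}} + \frac{d\dt}{\veps}\sum_j B_{i,j}^\ell\,T_j^{\ell+1} + R_i^\ell,
\end{equation*}
where $R_i^\ell := -\rho_i(s_i^{\ell+1}-s_i^{\mathfrak{n}}) + \frac{\dt}{\veps}\sum_j B_{i,j}^\ell\big[m_j(s_j^{\ell+1}-S_{i,j}^{\ell+1}) - m_i(s_i^{\ell+1}-S_{i,j}^{\ell+1})\big]$. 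If $R_i^\ell\geq 0$, then choosing $i^\star$ to attain $\min_i T_i^{\ell+1}$ and using $\sum_j B_{i^\star,j}^\ell T_j^{\ell+1}\geq\big(\sum_j B_{i^\star,j}^\ell\big)T_{i^\star}^{\ell+1}$ yields $dn_{i^\star}T_{i^\star}^{\ell+1}\geq dn_{i^\star}T_{i^\star}^{\mathfrak{n}} + R_{i^\star}^\ell\geq dn_{i^\star}T_{\min}^{\mathfrak{n}}$, so $T_i^{\ell+1}\geq T_{\min}^{\mathfrak{n}}$ for all $i$ and the induction closes. So everything reduces to showing $R_i^\ell\geq 0$, for which I would use three structural facts: (i) the symmetry of $c_{i,j}$ in \eqref{eq:collisionFrequencies} and of $\Psi_{i,j}$ in \eqref{eq:Psi} collapses the weights in \eqref{eq:MomentDefs} to mass weights, giving $\bfu_{i,j}^{\ell+1} = (m_i\bfu_i^{\ell+1}+m_j\bfu_j^{\ell+1})/(m_i+m_j)$ and hence, by the variance identity, $S_{i,j}^{\ell+1} = \frac{m_i s_i^{\ell+1}+m_j s_j^{\ell+1}}{m_i+m_j} - \frac{m_i m_j}{(m_i+m_j)^2}|\bfu_i^{\ell+1}-\bfu_j^{\ell+1}|^2$; (ii) \Cref{definition:A_ij_B_ij} gives $A_{i,j}^\ell = \frac{2m_i m_j}{m_i+m_j}B_{i,j}^\ell$; and (iii) dotting the velocity update $\rho_i(\bfu_i^{\ell+1}-\bfu_i^{\mathfrak{n}}) = \frac{\dt}{\veps}\sum_j A_{i,j}^\ell(\bfu_j^{\ell+1}-\bfu_i^{\ell+1})$ with $\bfu_i^{\ell+1}$ and using $|\bfa|^2-|\bfb|^2 = 2\bfa\cdot(\bfa-\bfb) - |\bfa-\bfb|^2$ gives
\begin{equation*}
-\rho_i(s_i^{\ell+1}-s_i^{\mathfrak{n}}) = \rho_i|\bfu_i^{\ell+1}-\bfu_i^{\mathfrak{n}}|^2 + \frac{\dt}{\veps}\sum_j A_{i,j}^\ell\big[(s_i^{\ell+1}-s_j^{\ell+1}) + |\bfu_i^{\ell+1}-\bfu_j^{\ell+1}|^2\big].
\end{equation*}
Inserting (i)--(iii) into $R_i^\ell$, the $s$- and $S$-dependent terms recombine into a multiple of $|\bfu_i^{\ell+1}-\bfu_j^{\ell+1}|^2$, and one is left with
\begin{equation*}
R_i^\ell = \rho_i|\bfu_i^{\ell+1}-\bfu_i^{\mathfrak{n}}|^2 + \frac{\dt}{\veps}\sum_j B_{i,j}^\ell\,\frac{m_i m_j(m_i+3m_j)}{(m_i+m_j)^2}\,|\bfu_i^{\ell+1}-\bfu_j^{\ell+1}|^2 \geq 0.
\end{equation*}

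\emph{Main obstacle.} The only non-routine step is this last one. The bare kinetic-energy transfer term $\frac{1}{\veps}\sum_j B_{i,j}^\ell[m_j(s_j-S_{i,j}) - m_i(s_i-S_{i,j})]$ is not sign-definite on its own; it is precisely its combination with $-\rho_i(s_i^{\ell+1}-s_i^{\mathfrak{n}})$ — re-expressed through the velocity update and the mass-centroid form of $\bfu_{i,j}$, with the exact proportionality $A^\ell = \frac{2m_i m_j}{m_i+m_j}B^\ell$ making the cancellation close — that produces the nonnegative form above. Without exploiting this structure one would at best obtain a temperature bound contingent on the size of $\dt/\veps$, whereas the claimed bound must be unconditional, as it is used in the proof of \Cref{prop:z_min_defs}.
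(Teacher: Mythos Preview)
Your proposal is correct and follows essentially the same route as the paper: a discrete extremum principle for each linear solve, with the temperature bound reduced to the nonnegativity of the velocity-dependent source $R_i^\ell$, which is rewritten as a sum of squares by combining the velocity update identity with the structure of $S_{i,j}$. The paper packages the positivity computation in appendix lemmas (\Cref{lemma:s_i_diffs}, \Cref{lemma:technical_identity}, \Cref{lemma:TempSourcesPositive}) using the abstract weights $\alpha_{i,j},\beta_{i,j}$ --- arriving at the coefficient $\alpha_{j,i}(\alpha_{j,i}+\beta_{i,j})$ --- whereas you substitute the hard-sphere simplifications $\alpha_{i,j}=m_i/(m_i+m_j)$, $\beta_{i,j}=\tfrac12$ at the outset; unwinding the notation, your expression $B_{i,j}^\ell\,\frac{m_im_j(m_i+3m_j)}{(m_i+m_j)^2}$ is exactly the paper's $m_i\lambda_{i,j}^\ell\alpha_{j,i}(\alpha_{j,i}+\beta_{i,j})$ multiplied by $n_i$, so the two computations are identical.
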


    \begin{proof}
We first verify the lower bound in \eqref{eq:velocityBounds}.
The velocity update \eqref{eq:GSTstep_vel} is equivalent to
    \begin{equation}
u_{i,k}^{\ell+1}
  = u_{i,k}^{\mathfrak{n}}+ \frac{\dt}{\veps}\sum_j\lambda_{i,j}^{\ell}\alpha_{j,i}
    \left(
      u_{j,k}^{\ell+1}-u_{i,k}^{\ell+1}
    \right)
,
    \end{equation}
where $\alpha_{j,i} = \frac{\rho_j\lambda_{j,i}^\ell}{\rho_i\lambda_{i,j}^\ell+\rho_j\lambda_{j,i}^\ell}$ is independent of $\ell$ (see \Cref{lemma:alpha_beta_diffs}).
Write $\underline{u}_k^{\ell} = \min_j\{u_{j,k}^{\ell}\}$.
Then, with $c_i^{\ell} = \sum_j\lambda_{i,j}^{\ell}\alpha_{j,i}$,
    \begin{subequations}
    \begin{align}
    \label{eq:vel_bound_1}
u_{i,k}^{\ell+1}
  &\geq
    \underline{u}_k^{\mathfrak{n}} + \frac{\dt}{\veps}\sum_j\lambda_{i,j}^{\ell}\alpha_{j,i}
    \left(
      \underline{u}_k^{\ell+1}\!-u_{i,k}^{\ell+1}
    \right)
  =
    \underline{u}_k^{\mathfrak{n}}
    +
    \frac{\dt}{\veps}c_i^{\ell}\underline{u}_k^{\ell+1}
    -
    \frac{\dt}{\veps}c_i^{\ell}u_{i,k}^{\ell+1}
    \\
    \label{eq:vel_bound_1a}
\Longrightarrow\quad
u_{i,k}^{\ell+1}
  &\geq
    \frac{\veps}{\veps+\dt c_i^{\ell}}\underline{u}_k^{\mathfrak{n}}
    +
    \frac{\dt c_i^{\ell}}{\veps+\dt c_i^{\ell}}\underline{u}_k^{\ell+1}
.
    \end{align}
    \end{subequations}
Choose $i_{\star}\in\{1,\cdots,N\}$ such that $u_{i_{\star},k}^{\ell+1}=\underline{u}_k^{\ell+1}$, and let $c_\star^{\ell} = c_{i_\star}^{\ell}$.
Then after some simple algebra, \eqref{eq:vel_bound_1a} implies that
    \begin{equation}
\underline{u}_k^{\ell+1}
  \geq
    \frac{\veps}{\veps+\dt c_\star^{\ell}}\underline{u}_k^{\mathfrak{n}}
    +
    \frac{\dt c_\star^{\ell}}{\veps+\dt c_\star^{\ell}}\underline{u}_k^{\ell+1}
\qquad\Longrightarrow\qquad
\underline{u}_k^{\ell+1}
  \geq
    \underline{u}_k^{\mathfrak{n}}
.
    \end{equation}
The proof of the upper bound in \eqref{eq:velocityBounds} is similar, and
\eqref{eq:u_max_def} follows easily from \eqref{eq:velocityBounds}.

We next show \eqref{eq:tempBound}.
Rearranging \eqref{eq:GSTstep_temp} for the temperature gives
    \begin{align}
    \nonumber
T_i^{\ell+1}
  &= T_i^{\mathfrak{n}}
    + \frac{\dt}{\veps}\sum_j\lambda_{i,j}^{\ell}\beta_{j,i}
      \left(
        T_j^{\ell+1}-T_i^{\ell+1}
      \right)
    + \frac{m_i}{d}(s_i^{\mathfrak{n}}-s_i^{\ell+1})
    + \frac{\dt}{\veps d}
      \frac{1}{n_i}
      \left[
        \bfs^{(2),\ell+1,\ell}
      \right]_i
    \\
    \label{eq:temp_bound_1}
    &\geq T_i^{\mathfrak{n}}
    + \frac{\dt}{\veps}\sum_j\lambda_{i,j}^{\ell}\beta_{j,i}
    \left(
      T_j^{\ell+1}-T_i^{\ell+1}
    \right)
,
    \end{align}
where the inequality in \eqref{eq:temp_bound_1} follows from \Cref{lemma:TempSourcesPositive}.
The rest of the proof of \eqref{eq:tempBound} proceeds in the same way as the velocity lower bound, starting at \eqref{eq:vel_bound_1}. 
    \end{proof}

    \begin{remark}
The results of \Cref{prop:GST_properties} can be verified for the backward Euler step directly.
The proof is a slight modification of the one given above.
    \end{remark}

    \section{Analysis of Velocity Iterates}
    \label{section:velocityProof}

Let $\bfW_\mathcal{N}^{\ell}$ and $\bfW_\mathcal{R}^{\ell}$ be the null and range space components, respectively, of $\bfW^{\ell}$ with respect to $Z^\ell$.
Since $Z^\ell$ is symmetric, its null space and range space are orthogonal, and $\bfW^\ell=\bfW_\mathcal{N}^\ell + \bfW_\mathcal{R}^\ell$.
The following property will be used throughout the section.

    \begin{lemma}[\cite{asymptoticRelaxation}, Section 4]
    \label{lem:mullSpaceInvariance}
The null space $\mathcal{N}$ and range space $\mathcal{R}$ of the matrix $Z^\ell$ are independent of $\ell$.
The null space has dimension one and is spanned by the vector
$P^{\sfrac{1}{2}}\mathbf{1}$.
    \end{lemma}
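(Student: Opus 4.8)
The plan is to reduce the statement to a fact about the unweighted matrix $L^\ell := D^\ell - A^\ell$, exploiting that $Z^\ell = P^{-\sfrac{1}{2}} L^\ell P^{-\sfrac{1}{2}}$ is a congruence transformation of $L^\ell$ by the invertible matrix $P^{-\sfrac{1}{2}}$ (invertible since $\rho_k = m_k n_k > 0$). First I would note that $L^\ell$ is a weighted graph Laplacian: by \Cref{definition:A_ij_B_ij}, $A^\ell$ is symmetric because $c_{i,j}^A = c_{j,i}^A$ and $\Psi_{i,j} = \Psi_{j,i}$, while $D^\ell$ is the diagonal of row sums of $A^\ell$, so each row of $L^\ell$ sums to zero. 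Hence $L^\ell$ is symmetric with $L^\ell \mathbf{1} = 0$, and therefore $Z^\ell\!\left(P^{\sfrac{1}{2}}\mathbf{1}\right) = P^{-\sfrac{1}{2}} L^\ell \mathbf{1} = 0$. This already shows $P^{\sfrac{1}{2}}\mathbf{1} \in \mathcal{N}(Z^\ell)$ for every $\ell$, with the containment manifestly independent of $\ell$.

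Next I would prove the matching bound $\dim\mathcal{N}(L^\ell) = 1$. The one genuinely non-formal ingredient enters here: I need every off-diagonal entry of $-L^\ell$ to be strictly positive, i.e. $A_{i,j}^\ell = c_{i,j}^A\Psi_{i,j}^\ell > 0$. This holds because $c_{i,j}^A > 0$ and, by the temperature bound \eqref{eq:tempBound} of \Cref{prop:GST_properties}, $T_i^\ell \geq T_{\min}^{\mathfrak{n}} > 0$, so $\Psi_{i,j}^\ell = \sqrt{T_i^\ell/m_i + T_j^\ell/m_j} > 0$. With all weights positive, the identity $\bfx^\top L^\ell \bfx = \tfrac{1}{2}\sum_{i,j} A_{i,j}^\ell (x_i - x_j)^2$ shows $L^\ell$ is positive semidefinite and that $\bfx^\top L^\ell \bfx = 0$ forces $\bfx$ to be a constant vector; hence $\mathcal{N}(L^\ell) = \operatorname{span}\{\mathbf{1}\}$ (this is the connected-graph-Laplacian fact and can alternatively be cited from \cite[Section 4]{asymptoticRelaxation}). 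Since invertibility of $P^{-\sfrac{1}{2}}$ gives $\mathcal{N}(Z^\ell) = P^{\sfrac{1}{2}}\,\mathcal{N}(L^\ell)$, we conclude $\mathcal{N}(Z^\ell) = \operatorname{span}\{P^{\sfrac{1}{2}}\mathbf{1}\}$, a one-dimensional subspace independent of $\ell$.

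Finally, for the range space I would use that $Z^\ell$ is symmetric (as already recorded when $Z^\ell$ was introduced), so $\bbR^N$ decomposes orthogonally as $\mathcal{N}(Z^\ell) \oplus \mathcal{R}(Z^\ell)$ and thus $\mathcal{R}(Z^\ell) = \mathcal{N}(Z^\ell)^\perp$. Because the null space is the fixed subspace $\operatorname{span}\{P^{\sfrac{1}{2}}\mathbf{1}\}$, its orthogonal complement $\mathcal{R}$ is $\ell$-independent as well. I do not anticipate a real obstacle: the whole argument is linear algebra once $L^\ell$ is recognized as the Laplacian of a complete (hence connected) weighted graph, and the only place care is required is in invoking the a priori lower bound on the temperature iterates to guarantee the weights $A_{i,j}^\ell$ are strictly positive.
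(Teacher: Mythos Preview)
Your argument is correct: recognizing $L^\ell = D^\ell - A^\ell$ as the Laplacian of a complete weighted graph with strictly positive weights, and using the congruence $Z^\ell = P^{-\sfrac{1}{2}} L^\ell P^{-\sfrac{1}{2}}$, is exactly the right mechanism, and the symmetry of $Z^\ell$ then pins down $\mathcal{R}$ as the ($\ell$-independent) orthogonal complement of $\operatorname{span}\{P^{\sfrac{1}{2}}\mathbf{1}\}$.

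The paper does not supply its own proof of this lemma; it simply cites \cite[Section~4]{asymptoticRelaxation}. Your write-up is therefore a self-contained substitute for that citation, and it follows the same graph-Laplacian reasoning one would find there. The only remark worth making is about the appeal to \eqref{eq:tempBound}: that bound is stated for $T_i^{\ell+1}$, so strictly speaking you should note separately that the base case $T_i^{0} = T_i^{\mathfrak{n}} \geq T_{\min}^{\mathfrak{n}}$ is immediate, after which \eqref{eq:tempBound} covers all $\ell \geq 1$. This is a one-line addition and does not affect the substance of your argument.
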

    
It will be convenient for the analysis of velocity terms to use the Frobenius norm
    \begin{equation}
\froNorm{\bfW}^2
  = \sum_{i=1}^N\sum_{j=1}^dw_{i,j}^2
  = \sum_{i=1}^N\|\bfw_i\|_2^2
  = \sum_{j=1}^d
    \left\|
      \bfW_{(\cdot),j}
    \right\|_2^2
.
    \end{equation}

    \subsection{Velocity Iterates on Null Space}

    \begin{lemma}
    \label{lemma:nullSpaceIter}
The null space iterates are $\bfW^\ell_\mathcal{N} = P^{\sfrac{1}{2}}\bfU^\infty$, where $\bfU^\infty = \mathbf{1} (\bfu^\infty )^\top$.
In particular, they are independent of $\ell$.
As a result, $\froNorm{\Delp{\bfW}} = \froNorm{\Delp{(\bfW_\mathcal{R})}}$.
    \end{lemma}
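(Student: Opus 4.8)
The plan is to combine two facts already available: \Cref{lem:mullSpaceInvariance}, which says the null space $\mathcal{N}$ of $Z^\ell$ is one-dimensional, spanned by $P^{\sfrac{1}{2}}\mathbf{1}$, and independent of $\ell$; and \Cref{prop:GST_conservation_laws}, the discrete conservation of total momentum along the GST iterates. Since $Z^\ell$ is symmetric, $\mathcal{R}$ is the orthogonal complement of $\mathcal{N}$, and the decomposition $\bfW^\ell = \bfW^\ell_\mathcal{N} + \bfW^\ell_\mathcal{R}$ is the orthogonal one. Because $\mathcal{N}$ is spanned by the single vector $P^{\sfrac{1}{2}}\mathbf{1}$, the projection onto $\mathcal{N}$ acts columnwise on $\bfW^\ell \in \bbR^{N\times d}$, so there is a vector $\bfc^\ell \in \bbR^d$ with $\bfW^\ell_\mathcal{N} = P^{\sfrac{1}{2}}\mathbf{1}\,(\bfc^\ell)^\top$, and it remains only to identify $\bfc^\ell$ and show it is independent of $\ell$.

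To pin down $\bfc^\ell$, I would pair the identity $\bfW^\ell = \bfW^\ell_\mathcal{N} + \bfW^\ell_\mathcal{R}$ with $P^{\sfrac{1}{2}}\mathbf{1}$ on the left. Orthogonality of $\bfW^\ell_\mathcal{R}$ to $\mathcal{N}$ kills the range term, so $\mathbf{1}^\top P^{\sfrac{1}{2}}\bfW^\ell = \big(\mathbf{1}^\top P\mathbf{1}\big)(\bfc^\ell)^\top = \big(\sum_i\rho_i\big)(\bfc^\ell)^\top$; on the other hand $\mathbf{1}^\top P^{\sfrac{1}{2}}\bfW^\ell = \mathbf{1}^\top P\bfU^\ell = \sum_i\rho_i(\bfu_i^\ell)^\top$. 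By \Cref{prop:GST_conservation_laws}, $\sum_i\rho_i\bfu_i^\ell = \sum_i\rho_i\bfu_i^{\mathfrak{n}} = \big(\sum_i\rho_i\big)\bfu^\infty$ for every $\ell$, whence $\bfc^\ell = \bfu^\infty$ and $\bfW^\ell_\mathcal{N} = P^{\sfrac{1}{2}}\mathbf{1}\,(\bfu^\infty)^\top = P^{\sfrac{1}{2}}\bfU^\infty$, independent of $\ell$ as claimed.

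Finally, writing $\Delp{\bfW} = \bfW^{\ell+1} - \bfW^\ell = \big(\bfW^{\ell+1}_\mathcal{N} - \bfW^\ell_\mathcal{N}\big) + \big(\bfW^{\ell+1}_\mathcal{R} - \bfW^\ell_\mathcal{R}\big)$ and noting that the first parenthesis vanishes since both null-space iterates equal $P^{\sfrac{1}{2}}\bfU^\infty$, we get $\Delp{\bfW} = \Delp{(\bfW_\mathcal{R})}$ and hence $\froNorm{\Delp{\bfW}} = \froNorm{\Delp{(\bfW_\mathcal{R})}}$. I do not expect a genuine obstacle here; the only point requiring a little care is the bookkeeping of the matrix structure — treating $\bfW^\ell$ columnwise so that the rank-one projector onto the one-dimensional $\mathcal{N}$ is applied to each of the $d$ columns — and checking that the initialization $\bfW^{\mathfrak{n}} = P^{\sfrac{1}{2}}\bfU^{\mathfrak{n}}$ is consistent with the conservation law at $\ell = 0$.
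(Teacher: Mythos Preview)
Your proposal is correct and follows essentially the same route as the paper: write $\bfW^\ell_\mathcal{N} = P^{\sfrac{1}{2}}\mathbf{1}(\bfc^\ell)^\top$ via \Cref{lem:mullSpaceInvariance}, pair with $P^{\sfrac{1}{2}}\mathbf{1}$ and use orthogonality to identify $\bfc^\ell = \bfu^\infty$ via \Cref{prop:GST_conservation_laws}. The only cosmetic difference is in the last line: the paper invokes the Pythagorean identity $\froNorm{\Delp{\bfW}}^2 = \froNorm{\Delp{(\bfW_\mathcal{R})}}^2 + \froNorm{\Delp{(\bfW_\mathcal{N})}}^2$ and then sets the null-space term to zero, whereas you observe directly that $\Delp{\bfW} = \Delp{(\bfW_\mathcal{R})}$ as matrices.
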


    \begin{proof}
According to \Cref{lem:mullSpaceInvariance}, $\bfW_\mathcal{N}^\ell = P^{\sfrac{1}{2}}\mathbf{1}(\bfc^\ell)^\top$, for some $\bfc^\ell\in\bbR^{d}$.
Since $\bfW^\ell_\mathcal{N} \perp \bfW^\ell_\mathcal{R}$, it follows that $\mathbf{1}^\top P^{\sfrac{1}{2}} \bfW^\ell = \mathbf{1}^\top P^{\sfrac{1}{2}} \bfW^\ell_\mathcal{N} = \mathbf{1}^\top P \mathbf{1}(\bfc^\ell)^\top.$
Hence,
    \begin{equation}
\bfc^\ell
  = \frac{(\bfW^\ell)^\top P^{\sfrac{1}{2}} \mathbf{1}}{\mathbf{1}^\top P\mathbf{1}}
  = \frac{(\bfU^\ell)^\top P\mathbf{1}}{\mathbf{1}^\top P\mathbf{1}}
  = \frac{\sum_i\rho_i\bfu_i^\ell}{\sum_i\rho_i}
  = \bfu^\infty
,
    \end{equation}
which is independent of $\ell$ by \Cref{prop:GST_conservation_laws}.
Thus, $\bfW^{\ell}_\mathcal{N} = P^{\sfrac{1}{2}}\mathbf{1}(\bfu^\infty)^\top = P^{\sfrac{1}{2}} \bfU^{\infty}$ is independent of $\ell$.
As a consequence,
    \begin{equation}
\froNorm{\Delp{\bfW}}^2
  =
    \froNorm{\Delp{(\bfW_\mathcal{R}})}^2
    +
    \froNorm{\Delp{(\bfW_\mathcal{N}})}^2
  = 
    \froNorm{\Delp{(\bfW_\mathcal{R}})}^2
.
    \end{equation}
    \end{proof}

    \subsection{Velocity Iterates on Range Space and Initial Norm Bounds}
    
    \begin{defn}
    \label{definition:aIteratesVMatrix}
Let the columns of $V=[\bfv_1,\cdots,\bfv_{N-1}]\in\bbR^{N\times(N-1)}$ form an orthonormal basis of $\mathcal{R}$.
For each $\ell$, let $\bfA^\ell = [\bfa^\ell_1,\cdots,\bfa^\ell_{d}]\in\bbR^{(N-1)\times d}$ be the unique matrix of coefficients such that $\bfW^\ell_\mathcal{R}=V\bfA^\ell$.
     \end{defn}

    \begin{lemma}
    \label{lemma:simple_norms}
Given $V$ as in \Cref{definition:aIteratesVMatrix} and any $\bfY\in\bbR^{N\times d}$,
    \begin{equation}
    \label{eq:Vbounds}
\|V\|_2
  =
    1
,\qquad
\|V^\top\|_2
  \leq
    1
,\qquad
\froNorm{V^\top\bfY}
  \leq
    \froNorm{\bfY}
,
    \end{equation}
where $\|\cdot\|_2$ denotes the induced (operator) 2-norm when the argument is an operator.
In particular, setting $\bfY = \bfA$ and $\bfY =\Delp{\bfA}$, respectively, gives
    \begin{subequations}
    \label{eq:W}
    \begin{gather}
    \label{eq:W_bound}
\froNorm{\bfW^\ell_\mathcal{R}}
  =
    \froNorm{V\bfA^\ell}
  =
    \froNorm{\bfA^\ell}
,
    \\
    \label{eq:W_diff}
\froNorm{\Delp{\bfW}}
  =
    \froNorm{\Delp{(\bfW_\mathcal{R}})}
  =
    \froNorm{V\Delp(\bfA)}
  =
    \froNorm{\Delp{\bfA}}
.
    \end{gather}
    \end{subequations}
    \end{lemma}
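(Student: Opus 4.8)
The plan is to verify the three operator/Frobenius-norm estimates in \eqref{eq:Vbounds} directly from the fact that the columns of $V$ are orthonormal, and then specialize. First I would note that $V^\top V = I_{N-1}$ because $\{\bfv_1,\dots,\bfv_{N-1}\}$ is an orthonormal set. For any $\bfx\in\bbR^{N-1}$ this gives $\|V\bfx\|_2^2 = \bfx^\top V^\top V\bfx = \|\bfx\|_2^2$, so $\|V\|_2 = 1$. For $\|V^\top\|_2$, observe that for $\bfy\in\bbR^N$ we have $\|V^\top\bfy\|_2^2 = \bfy^\top V V^\top\bfy$, and $VV^\top$ is the orthogonal projector onto $\mathcal{R}$, hence $\|V^\top\bfy\|_2^2 = \|\mathrm{proj}_{\mathcal{R}}\bfy\|_2^2 \le \|\bfy\|_2^2$, giving $\|V^\top\|_2 \le 1$ (with equality unless $\mathcal{R}=\{0\}$).

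Next I would upgrade the operator bound on $V^\top$ to the Frobenius bound on matrices. For $\bfY = [\bfy^{(1)},\dots,\bfy^{(d)}]\in\bbR^{N\times d}$ with columns $\bfy^{(j)}$, the matrix $V^\top\bfY$ has columns $V^\top\bfy^{(j)}$, so
\begin{equation}
\froNorm{V^\top\bfY}^2
  = \sum_{j=1}^d \|V^\top\bfy^{(j)}\|_2^2
  \leq \sum_{j=1}^d \|\bfy^{(j)}\|_2^2
  = \froNorm{\bfY}^2,
\end{equation}
using the $\|V^\top\|_2\le 1$ bound column-by-column. This establishes \eqref{eq:Vbounds}.

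For the specialized identities \eqref{eq:W_bound} and \eqref{eq:W_diff}, the key observation is that $\froNorm{V\bfX} = \froNorm{\bfX}$ for any $\bfX\in\bbR^{(N-1)\times d}$: applying the column argument again together with $\|V\bfx\|_2 = \|\bfx\|_2$ for vectors gives $\froNorm{V\bfX}^2 = \sum_{j=1}^d \|V\bfX_{(\cdot),j}\|_2^2 = \sum_{j=1}^d \|\bfX_{(\cdot),j}\|_2^2 = \froNorm{\bfX}^2$. Taking $\bfX = \bfA^\ell$ and recalling $\bfW^\ell_\mathcal{R} = V\bfA^\ell$ from \Cref{definition:aIteratesVMatrix} yields \eqref{eq:W_bound}. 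For \eqref{eq:W_diff}, note that $\Delp{(\bfW_\mathcal{R})} = \bfW_\mathcal{R}^{\ell+1} - \bfW_\mathcal{R}^\ell = V(\bfA^{\ell+1} - \bfA^\ell) = V\,\Delp{\bfA}$ since $V$ is independent of $\ell$ (as $\mathcal{R}$ is, by \Cref{lem:mullSpaceInvariance}); applying the isometry identity with $\bfX = \Delp{\bfA}$ gives $\froNorm{\Delp{(\bfW_\mathcal{R})}} = \froNorm{\Delp{\bfA}}$, and the remaining equality $\froNorm{\Delp{\bfW}} = \froNorm{\Delp{(\bfW_\mathcal{R})}}$ is exactly \Cref{lemma:nullSpaceIter}. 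There is no real obstacle here; the only point requiring a little care is keeping straight that $V$ acts as an isometry on column-vectors and that this passes to the Frobenius norm on matrices by summing over columns — the estimates for $V^\top$ are one-sided (projection, not isometry) precisely because $\mathcal{R}$ is a proper subspace of $\bbR^N$.
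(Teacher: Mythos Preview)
Your proof is correct and complete. The paper states this lemma without proof, treating the claims as elementary consequences of the orthonormality of the columns of $V$; your argument fills in exactly the details one would expect, including the careful observation that the equalities in \eqref{eq:W} come from the isometry $\|V\bfx\|_2=\|\bfx\|_2$ applied columnwise (rather than from the one-sided bound on $V^\top$) and that the first equality in \eqref{eq:W_diff} is supplied by \Cref{lemma:nullSpaceIter}.
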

Based on \eqref{eq:W}, we seek to bound $\froNorm{\bfA^\ell}$, and the difference $\froNorm{\Delp{\bfA}}$.

    \begin{lemma}
    \label{lemma:W_RangeSpaceIterates}
The matrix iterates $\bfA^{\ell+1}$ satisfy
    \begin{equation}
    \label{eq:W_RangeSpaceIterates}
\bfA^{\ell+1}
  =
    M^{\ell} V^\top\bfW^{\mathfrak{n}} 
\qquand
\Delp{\bfA}
  =
    \Delm{(M)} V^\top\bfW^{\mathfrak{n}}
,
    \end{equation}
where $M^{\ell} = \left(I+\frac{\dt}{\veps}V^\top Z^\ell V\right)^{-1}$.
    \end{lemma}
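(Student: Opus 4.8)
The plan is to start from the $\bfW$-update in \eqref{eq:WUpdateIterative}, namely $\left(I+\frac{\dt}{\veps}Z^\ell\right)\bfW^{\ell+1} = \bfW^{\mathfrak{n}}$, and project it onto the range space $\mathcal{R}$ of $Z^\ell$, which by \Cref{lem:mullSpaceInvariance} is independent of $\ell$. First I would split $\bfW^{\ell+1} = \bfW_\mathcal{N}^{\ell+1} + \bfW_\mathcal{R}^{\ell+1} = \bfW_\mathcal{N}^{\ell+1} + V\bfA^{\ell+1}$ using \Cref{definition:aIteratesVMatrix}. Since $Z^\ell$ annihilates $\bfW_\mathcal{N}^{\ell+1}$ (the null space component lies in $\mathcal{N}$), we have $\left(I+\frac{\dt}{\veps}Z^\ell\right)\bfW^{\ell+1} = \bfW_\mathcal{N}^{\ell+1} + \left(I+\frac{\dt}{\veps}Z^\ell\right)V\bfA^{\ell+1}$. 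Left-multiplying the whole identity by $V^\top$ and using $V^\top V = I_{N-1}$ together with $V^\top \bfW_\mathcal{N}^{\ell+1} = 0$ (orthogonality of $\mathcal{N}$ and $\mathcal{R}$), the null-space piece drops out and we obtain $\left(I + \frac{\dt}{\veps}V^\top Z^\ell V\right)\bfA^{\ell+1} = V^\top \bfW^{\mathfrak{n}}$.

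The next step is to check that $M^\ell \coloneqq \left(I+\frac{\dt}{\veps}V^\top Z^\ell V\right)^{-1}$ is well-defined, i.e. that the $(N-1)\times(N-1)$ matrix being inverted is nonsingular. This follows because $V^\top Z^\ell V$ is symmetric (as $Z^\ell$ is symmetric, by the proposition preceding \Cref{definition:A_ij_B_ij}) and positive definite: its eigenvalues are exactly the nonzero eigenvalues $z_1^\ell \le \cdots \le z_{N-1}^\ell$ of $Z^\ell$, which are bounded below by $z_{\min}^\mathfrak{n} > 0$ according to \Cref{prop:z_min_defs}. Hence $I + \frac{\dt}{\veps}V^\top Z^\ell V$ has all eigenvalues at least $1 + \frac{\dt}{\veps}z_{\min}^\mathfrak{n} > 1$, so it is invertible and $\bfA^{\ell+1} = M^\ell V^\top \bfW^{\mathfrak{n}}$, which is the first identity in \eqref{eq:W_RangeSpaceIterates}.

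For the second identity, I would simply subtract consecutive iterates: $\Delp{\bfA} = \bfA^{\ell+1} - \bfA^\ell = M^\ell V^\top \bfW^{\mathfrak{n}} - M^{\ell-1} V^\top \bfW^{\mathfrak{n}} = \left(M^\ell - M^{\ell-1}\right) V^\top \bfW^{\mathfrak{n}} = \Delm{(M)} V^\top \bfW^{\mathfrak{n}}$, using the difference-operator notation of \Cref{defn:CauchyDelta} applied to the matrix sequence $M$. Note that the right-hand side $\bfW^{\mathfrak{n}}$ is fixed across iterations (it is the data, not an iterate), which is what makes the telescoping clean.

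There is no serious obstacle here; the only point requiring a little care is keeping the bookkeeping of the null/range decomposition straight — specifically, noting that $V^\top Z^\ell V$ captures precisely the restriction of $Z^\ell$ to $\mathcal{R}$ and that the null-space component of $\bfW^{\ell+1}$, which is nonzero, is correctly eliminated by the projection $V^\top$ rather than being ignored. The substantive analytic work (bounding $\froNorm{\Delm{(M)}}$ in terms of $\Gamma_{\bfW}^\mathfrak{n}$ and $\|\Delm{\bfeta}\|_2$) is deferred to the subsequent lemmas that establish \eqref{eq:main_theorem_W}; this lemma is just the algebraic reduction that sets up that estimate.
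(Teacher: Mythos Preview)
Your proposal is correct and follows essentially the same approach as the paper: decompose $\bfW^{\ell+1}$ into its null- and range-space parts, left-multiply \eqref{eq:WUpdateIterative} by $V^\top$, and use $Z^\ell\bfW_\mathcal{N}^{\ell+1}=\mathbf{0}$ and $V^\top\bfW_\mathcal{N}^{\ell+1}=\mathbf{0}$ to isolate $\bfA^{\ell+1}$. Your added justification that $M^\ell$ is well-defined and the explicit subtraction for $\Delp{\bfA}$ are fine elaborations that the paper leaves implicit.
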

    
    \begin{proof}
Write $\bfW^{\ell+1}=\bfW_\mathcal{R}^{\ell+1}+\bfW_\mathcal{N}^{\ell+1} = V\bfA^{\ell+1}+\bfW_\mathcal{N}^{\ell+1}$ in \eqref{eq:WUpdateIterative} 
and multiply on the left by $V^\top$: 
    \begin{equation}
V^\top\left(I+\frac{\dt}{\veps}Z^\ell\right)
\left(V\bfA^{\ell+1}+\bfW_\mathcal{N}^{\ell+1}\right)
  = V^\top\bfW^{\mathfrak{n}}
.
    \end{equation}
Since $Z^\ell \bfW_\mathcal{N}^{\ell+1}=\mathbf{0}\in\bbR^{N\times d}$ and $V^\top\bfW_\mathcal{N}^{\ell+1}=\mathbf{0}\in\bbR^{(N-1)\times d}$, the result follows.
    \end{proof}

    \begin{lemma}
    \label{lemma:inverse_norms}
Let $\gamma^\mathfrak{n} = \frac{\veps}{\veps + \dt z_{\min}^\mathfrak{n}}$, where $z_{\min}^\mathfrak{n}>0$ is a lower bound on the positive eigenvalues of $Z^\mathfrak{n}$, defined in \Cref{prop:z_min_defs}.
Then
    \begin{equation}
    \label{eq:inverse_norms}
\|M^\ell\|_2
  \leq
    \gamma^\mathfrak{n}
\qquand
\|\Delm{M}\|_2
  \leq
    \Gamma_{\bfW}^\mathfrak{n}\|\Delm{{Z}}\|_2
,
    \end{equation}
where $\Gamma_{\bfW}^\mathfrak{n}=\frac{\dt}{\veps}(\gamma^\mathfrak{n})^2$.
    \end{lemma}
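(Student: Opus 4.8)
The plan is to treat the two bounds separately, both reducing to statements about the symmetric matrix $V^\top Z^\ell V$. For the first bound, I would observe that $V^\top Z^\ell V \in \bbR^{(N-1)\times(N-1)}$ is symmetric, and its eigenvalues are exactly the nonzero eigenvalues of $Z^\ell$ (since $V$ is an orthonormal basis for the range space $\mathcal{R}$ of $Z^\ell$, which by \Cref{lem:mullSpaceInvariance} is $\ell$-independent, and $Z^\ell$ acts as an isomorphism on $\mathcal{R}$). By \Cref{prop:z_min_defs}, all these eigenvalues lie in $[z_{\min}^\mathfrak{n}, z_{\max}^\ell]$ with $z_{\min}^\mathfrak{n} > 0$. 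Hence $I + \frac{\dt}{\veps}V^\top Z^\ell V$ has all eigenvalues in $[1 + \frac{\dt}{\veps}z_{\min}^\mathfrak{n}, \,1 + \frac{\dt}{\veps}z_{\max}^\ell]$, so it is invertible and
\begin{equation}
\|M^\ell\|_2 = \left\|\left(I+\tfrac{\dt}{\veps}V^\top Z^\ell V\right)^{-1}\right\|_2 = \frac{1}{1 + \frac{\dt}{\veps}z_{\min}^\mathfrak{n}} = \frac{\veps}{\veps + \dt z_{\min}^\mathfrak{n}} = \gamma^\mathfrak{n}.
\end{equation}

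For the second bound, I would use the standard resolvent-difference identity: for invertible $M^{\ell}$ and $M^{\ell-1}$,
\begin{equation}
\Delm{M} = M^\ell - M^{\ell-1} = M^{\ell-1}\left(\left(M^{\ell-1}\right)^{-1} - \left(M^\ell\right)^{-1}\right)M^\ell = -\frac{\dt}{\veps}M^{\ell-1}\,V^\top\Delm{Z}\,V\,M^\ell.
\end{equation}
Taking operator norms, using submultiplicativity, the bounds $\|V\|_2 = \|V^\top\|_2 \le 1$ from \Cref{lemma:simple_norms}, and the first part $\|M^\ell\|_2, \|M^{\ell-1}\|_2 \le \gamma^\mathfrak{n}$, yields
\begin{equation}
\|\Delm{M}\|_2 \le \frac{\dt}{\veps}(\gamma^\mathfrak{n})^2\,\|\Delm{Z}\|_2 = \Gamma_{\bfW}^\mathfrak{n}\,\|\Delm{Z}\|_2,
\end{equation}
which is the claim, after noting $\Gamma_{\bfW}^\mathfrak{n} = \frac{\dt\veps}{(\veps+\dt z_{\min}^\mathfrak{n})^2} = \frac{\dt}{\veps}\cdot\frac{\veps^2}{(\veps+\dt z_{\min}^\mathfrak{n})^2} = \frac{\dt}{\veps}(\gamma^\mathfrak{n})^2$, consistent with the definition in \Cref{theorem:mainTheorem_Cauchy}.

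The only genuinely delicate point is the spectral claim in the first step: one must be sure that $V^\top Z^\ell V$ inherits the \emph{lower} bound $z_{\min}^\mathfrak{n}$ and not merely positivity. This is where \Cref{prop:z_min_defs} (and the invariance of $\mathcal{R}$ from \Cref{lem:mullSpaceInvariance}, together with the temperature bound \eqref{eq:tempBound} that keeps the collision frequencies, hence $A^\ell$, bounded below) does the work: since $V$'s columns span $\mathcal{R}$ and are orthonormal, for any unit $\bfx \in \bbR^{N-1}$ the vector $V\bfx \in \mathcal{R}$ is also a unit vector, so $\bfx^\top V^\top Z^\ell V\bfx = (V\bfx)^\top Z^\ell (V\bfx) \ge z_{\min}^\mathfrak{n}$ by the Rayleigh quotient characterization restricted to $\mathcal{R}$. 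Everything else is routine operator-norm bookkeeping.
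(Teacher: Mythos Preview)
Your proposal is correct and follows essentially the same route as the paper: the first bound is the spectral estimate proved in the appendix (\Cref{appendix:lemma:InverseMatrixNorm}) via the Rayleigh quotient on $\mathcal{R}$, and the second bound uses exactly the resolvent identity $X^{-1}-Y^{-1}=X^{-1}(Y-X)Y^{-1}$ followed by submultiplicativity and $\|V\|_2=\|V^\top\|_2\le1$. One small slip: in your display for $\|M^\ell\|_2$ the equality should be an inequality, since $z_{\min}^\mathfrak{n}$ is only a lower bound on the positive eigenvalues of $Z^\ell$, not the smallest eigenvalue itself.
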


    \begin{proof}
The first bound in \eqref{eq:inverse_norms} is verified in \Cref{appendix:lemma:InverseMatrixNorm}.
For the second, the identity $X^{-1}-Y^{-1}=X^{-1}(Y-X)Y^{-1}$ with $X^{-1} = M^\ell$ and $Y^{-1} = M^{\ell-1}$ implies
    \begin{equation}
\Delm{M} 
  =
    - \frac{\dt}{\veps} M^\ell V^\top \Delm{(Z)} V M^{\ell-1}
.
    \end{equation}
The result follows after taking operator norms.
    \end{proof}
    
    \begin{lemma}
The matrix iterates $\bfA^\ell$ satisfy
    \begin{subequations}
    \begin{align}
    \label{eq:A_iterate_bound}
\froNorm{\bfA^{\ell+1}}
  &\leq
    \gamma^n \froNorm{\bfW^n} 
,
    \\
    \label{eq:A_difference_bound}
\froNorm{\Delta^\ell \bfA}
  &\leq
    \Gamma_{\bfW}^\mathfrak{n}
    \froNorm{\bfW^{\mathfrak{n}}}
    \|\Delm Z\|_2
,
    \end{align}
    \end{subequations}
where $\Gamma_{\bfW}^\mathfrak{n} = \frac{\dt}{\veps}(\gamma^\mathfrak{n})^2$.
    \end{lemma}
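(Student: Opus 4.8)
The plan is to combine the explicit representations from \Cref{lemma:W_RangeSpaceIterates} with the norm estimates already established. From that lemma, $\bfA^{\ell+1} = M^\ell V^\top\bfW^{\mathfrak{n}}$ and $\Delp{\bfA} = \Delm{(M)} V^\top\bfW^{\mathfrak{n}}$, so each quantity is a matrix acting on the fixed object $V^\top\bfW^{\mathfrak{n}}\in\bbR^{(N-1)\times d}$. The one ingredient not yet stated explicitly is the mixed submultiplicativity inequality $\froNorm{X\bfY}\leq\|X\|_2\froNorm{\bfY}$ for $X\in\bbR^{p\times q}$ and $\bfY\in\bbR^{q\times d}$, which I would record via the column-by-column estimate $\froNorm{X\bfY}^2 = \sum_{k=1}^d\|X\bfY_{(\cdot),k}\|_2^2 \leq \|X\|_2^2\sum_{k=1}^d\|\bfY_{(\cdot),k}\|_2^2 = \|X\|_2^2\froNorm{\bfY}^2$.

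For \eqref{eq:A_iterate_bound} I would then write $\froNorm{\bfA^{\ell+1}} = \froNorm{M^\ell V^\top\bfW^{\mathfrak{n}}} \leq \|M^\ell\|_2\,\froNorm{V^\top\bfW^{\mathfrak{n}}} \leq \gamma^{\mathfrak{n}}\froNorm{\bfW^{\mathfrak{n}}}$, where the first step is the submultiplicativity above, $\|M^\ell\|_2\leq\gamma^{\mathfrak{n}}$ is the first estimate of \Cref{lemma:inverse_norms}, and $\froNorm{V^\top\bfW^{\mathfrak{n}}}\leq\froNorm{\bfW^{\mathfrak{n}}}$ is the third estimate of \Cref{lemma:simple_norms}. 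For \eqref{eq:A_difference_bound} the identical chain with $M^\ell$ replaced by $\Delm{M}$ gives $\froNorm{\Delp{\bfA}} = \froNorm{\Delm{(M)}V^\top\bfW^{\mathfrak{n}}} \leq \|\Delm{M}\|_2\froNorm{\bfW^{\mathfrak{n}}} \leq \Gamma_{\bfW}^{\mathfrak{n}}\|\Delm{Z}\|_2\froNorm{\bfW^{\mathfrak{n}}}$, the last step being the second estimate of \Cref{lemma:inverse_norms}. These are exactly the two claimed bounds.

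There is essentially no obstacle: the argument is a short chaining of previously established facts. The only point warranting care is the edge case $\ell=0$ in the difference bound, since the representation $\Delp{\bfA}=\Delm{(M)}V^\top\bfW^{\mathfrak{n}}$ in \Cref{lemma:W_RangeSpaceIterates} implicitly assumes $\ell\geq1$. For $\ell=0$ one checks directly that $\bfA^0 = V^\top\bfW^{\mathfrak{n}}$ (from $\bfW^0=\bfW^{\mathfrak{n}}$ and $\bfW_\mathcal{R}^0 = VV^\top\bfW^{\mathfrak{n}}$), so that $\Delta^0\bfA = (M^0-I)V^\top\bfW^{\mathfrak{n}}$; with the convention $Z^{-1}=0$, i.e. $M^{-1}=I$, this is consistent with the stated formula and the estimate carries through unchanged.
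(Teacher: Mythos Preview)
Your proof is correct and follows exactly the same approach as the paper, which simply states that the result follows from \Cref{lemma:simple_norms,lemma:W_RangeSpaceIterates,lemma:inverse_norms}. Your explicit recording of the mixed submultiplicativity inequality $\froNorm{X\bfY}\leq\|X\|_2\froNorm{\bfY}$ and the remark about the $\ell=0$ edge case are careful additions but not required, since the paper's convention (via \Cref{lemma:W_RangeSpaceIterates}) tacitly restricts to $\ell\geq1$.
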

    
    \begin{proof}
The result follows using \Cref{lemma:simple_norms,lemma:W_RangeSpaceIterates,lemma:inverse_norms}.
    \end{proof}

    \begin{lemma}
The difference of matrices $Z^\ell$ satisfy
    \begin{equation}
    \label{eq:Z_diff_bound}
\|\Delm{Z}\|_2
  \leq
    C_Z^\mathfrak{n}\|\Delm{\bfeta}\|_2
,\qqwhere
C_Z^\mathfrak{n}
  =
    \frac
        {2(N+N^{\sfrac{1}{2}})C_A^\mathfrak{n}}
        {\rho_{\min}n_{\min}^{\sfrac{1}{2}}}
.
    \end{equation}
    \end{lemma}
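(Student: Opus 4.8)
The plan is to expand $\Delm{Z}=P^{-\sfrac{1}{2}}(\Delm{D}-\Delm{A})P^{-\sfrac{1}{2}}$, pull the fixed diagonal scalings out of the operator norm, bound the ``core'' difference $\|\Delm{D}-\Delm{A}\|_2$ entrywise through the Lipschitz constant $C_A^\mathfrak{n}$ of the entries of $A$, and finally convert the temperature increment $\|\Delm{\bfT}\|_2$ into $\|\Delm{\bfeta}\|_2$. First I would note that, since $Z^\ell=P^{-\sfrac{1}{2}}(D^\ell-A^\ell)P^{-\sfrac{1}{2}}$ and $P=\textnormal{diag}\{\rho_k\}$ is independent of $\ell$, we have $\Delm{Z}=P^{-\sfrac{1}{2}}(\Delm{D}-\Delm{A})P^{-\sfrac{1}{2}}$; submultiplicativity of the operator norm together with $\|P^{-\sfrac{1}{2}}\|_2=\rho_{\min}^{-\sfrac{1}{2}}$ then yields $\|\Delm{Z}\|_2\leq\rho_{\min}^{-1}(\|\Delm{D}\|_2+\|\Delm{A}\|_2)$.

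Next I would estimate the two pieces entrywise. Recall $A_{i,j}=c_{i,j}^A\Psi_{i,j}$ with $\Psi_{i,j}=\sqrt{T_i/m_i+T_j/m_j}$. The a priori lower bound \eqref{eq:tempBound} confines all iterates to the region $T_i,T_j\geq T_{\min}^\mathfrak{n}>0$, on which $\sqrt{\cdot}$ is smooth; this is precisely how the non-Lipschitz behavior of $\Psi$ at zero temperature is sidestepped, and it is what makes the constant depend on $\mathfrak{n}$ (through $T_{\min}^\mathfrak{n}$). The mean value theorem then gives an entrywise bound $|\Delm{A_{i,j}}|\leq C_A^\mathfrak{n}(|\Delm{T_i}|+|\Delm{T_j}|)$ (or its $\ell_2$ analogue, depending on how $C_A^\mathfrak{n}$ is normalized). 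Since $\Delm{D}$ is diagonal with $i$-th entry $\sum_k\Delm{A_{i,k}}$, summing this bound over $k$ and using $\|\cdot\|_1\leq N^{\sfrac{1}{2}}\|\cdot\|_2$ together with $\|\cdot\|_\infty\leq\|\cdot\|_2$ gives $\|\Delm{D}\|_2\leq(N+N^{\sfrac{1}{2}})C_A^\mathfrak{n}\|\Delm{\bfT}\|_2$. For $\Delm{A}$, I would use $\|\Delm{A}\|_2\leq\froNorm{\Delm{A}}$ and sum the entrywise bound over the $N^2$ index pairs; since $\sum_{i,j}((\Delm{T_i})^2+(\Delm{T_j})^2)=2N\|\Delm{\bfT}\|_2^2$, this collapses to $\froNorm{\Delm{A}}\leq 2N^{\sfrac{1}{2}}C_A^\mathfrak{n}\|\Delm{\bfT}\|_2$. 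Combining the two bounds and absorbing leftover absolute factors via $N^{\sfrac{1}{2}}\leq N$ produces $\|\Delm{D}-\Delm{A}\|_2\leq 2(N+N^{\sfrac{1}{2}})C_A^\mathfrak{n}\|\Delm{\bfT}\|_2$.

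Finally, since $\bfeta=Q^{\sfrac{1}{2}}\bfT$ with $Q=\textnormal{diag}\{n_k\}$ independent of $\ell$, we have $\Delm{\bfT}=Q^{-\sfrac{1}{2}}\Delm{\bfeta}$, hence $\|\Delm{\bfT}\|_2\leq\|Q^{-\sfrac{1}{2}}\|_2\|\Delm{\bfeta}\|_2=n_{\min}^{-\sfrac{1}{2}}\|\Delm{\bfeta}\|_2$. Substituting into the chain above gives $\|\Delm{Z}\|_2\leq\frac{2(N+N^{\sfrac{1}{2}})C_A^\mathfrak{n}}{\rho_{\min}n_{\min}^{\sfrac{1}{2}}}\|\Delm{\bfeta}\|_2=C_Z^\mathfrak{n}\|\Delm{\bfeta}\|_2$, which is the claim. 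The only genuinely delicate part of the argument is keeping track of the dimensional constants: the factor $N$ in $C_Z^\mathfrak{n}$ arises because each diagonal entry of $D$ is a sum of $N$ off-diagonal entries of $A$, while the factor $N^{\sfrac{1}{2}}$ arises from bounding $\|\Delm{A}\|_2$ by its Frobenius norm and exploiting the pairwise structure of the entrywise Lipschitz estimate; everything else — submultiplicativity and pulling the constant diagonal matrices $P^{-\sfrac{1}{2}}$ and $Q^{-\sfrac{1}{2}}$ through the norms — is routine.
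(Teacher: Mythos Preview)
Your argument is correct and reaches the exact constant stated in the lemma. The route differs from the paper's in one step: after writing $\|\Delm{Z}\|_2\leq\rho_{\min}^{-1}\|\Delm{(D-A)}\|_2$, the paper does not split by the triangle inequality but instead argues that $\|\Delm{A}\|_2\leq\|\Delm{D}\|_2$ via a spectral comparison based on the positive semidefiniteness of the Laplacian $D-A$, so that $\|\Delm{(D-A)}\|_2\leq 2\|\Delm{D}\|_2$ directly; the bound on $\|\Delm{D}\|_2$ and the conversion $\|\Delm{\bfT}\|_2\leq n_{\min}^{-\sfrac{1}{2}}\|\Delm{\bfeta}\|_2$ are then identical to yours. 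Your alternative, bounding $\|\Delm{A}\|_2$ independently through the Frobenius norm and the entrywise Lipschitz estimate and then absorbing $N+3N^{\sfrac{1}{2}}\leq 2(N+N^{\sfrac{1}{2}})$, is more elementary and sidesteps the spectral step entirely; since that step in the paper asserts $0\leq\bfx^\top\Delm{Z}\bfx$ from the semidefiniteness of $Z$ itself (which does not in general transfer to the \emph{difference} $\Delm{Z}$), your route is arguably the more robust of the two.
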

    
    \begin{proof}
The proof is contained in \Cref{appendix:Z_and_ZHat_diffs}.
    \end{proof}
    
    \subsection{Summary of Velocity Term Analysis}
    
    \begin{theorem}
    \label{theorem:VelocityError}
The iterations of the velocity terms satisfy the following bound:
    \begin{equation}
\froNorm{\Delp{\bfW}}
  =
    \froNorm{\Delp{\bfA}}
  \leq
    C_{\bfW}^\mathfrak{n}\Gamma_{\bfW}^\mathfrak{n}
    \|\Delm{\bfeta}\|_2
,
    \end{equation}
where $C_{\bfW}^\mathfrak{n} = C_Z^\mathfrak{n}\froNorm{\bfW^{\mathfrak{n}}}$ and $\Gamma_{\bfW}^\mathfrak{n} = \frac{\dt\veps}{(\veps+\dt z_{\min}^\mathfrak{n})^2}$, with $C_Z^\mathfrak{n}$ as in \eqref{eq:Z_diff_bound} and $z_{\min}^\mathfrak{n}>0$ defined in \Cref{prop:z_min_defs}.
    \end{theorem}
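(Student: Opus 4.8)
The plan is to obtain this estimate by assembling the chain of lemmas already established in this section, since the analytical content has all been isolated upstream. First I would use \Cref{lemma:nullSpaceIter} to reduce to the range-space component: the null-space part $\bfW_\mathcal{N}^\ell$ equals $P^{\sfrac{1}{2}}\bfU^\infty$ and is independent of $\ell$, so $\froNorm{\Delp{\bfW}} = \froNorm{\Delp{(\bfW_\mathcal{R})}}$; then, writing $\bfW_\mathcal{R}^\ell = V\bfA^\ell$ with $V$ orthonormal-columned, \Cref{lemma:simple_norms} gives $\froNorm{\Delp{(\bfW_\mathcal{R})}} = \froNorm{\Delp{\bfA}}$. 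This is the equality asserted in the statement, so it remains to bound $\froNorm{\Delp{\bfA}}$.

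Next I would plug in the representation $\Delp{\bfA} = \Delm{(M)}V^\top\bfW^{\mathfrak{n}}$ from \Cref{lemma:W_RangeSpaceIterates} and estimate using the mixed norm inequality from \eqref{eq:Vbounds}, namely $\froNorm{\Delm{(M)}V^\top\bfW^{\mathfrak{n}}} \leq \|\Delm{M}\|_2 \froNorm{V^\top\bfW^{\mathfrak{n}}} \leq \|\Delm{M}\|_2 \froNorm{\bfW^{\mathfrak{n}}}$, followed by the resolvent-perturbation bound $\|\Delm{M}\|_2 \leq \Gamma_{\bfW}^\mathfrak{n}\|\Delm{Z}\|_2$ from \Cref{lemma:inverse_norms}; this is exactly \eqref{eq:A_difference_bound}. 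Finally I would feed in the Lipschitz-type estimate \eqref{eq:Z_diff_bound}, $\|\Delm{Z}\|_2 \leq C_Z^\mathfrak{n}\|\Delm{\bfeta}\|_2$, which transfers the control from the matrix $Z^\ell$ back to the temperature iterates through the dependence of the collision frequencies on $\Psi_{i,j}$. Chaining the three inequalities yields $\froNorm{\Delp{\bfW}} \leq C_Z^\mathfrak{n}\froNorm{\bfW^{\mathfrak{n}}}\,\Gamma_{\bfW}^\mathfrak{n}\|\Delm{\bfeta}\|_2$, and setting $C_{\bfW}^\mathfrak{n} = C_Z^\mathfrak{n}\froNorm{\bfW^{\mathfrak{n}}}$ gives the claimed form; the stated expression $\Gamma_{\bfW}^\mathfrak{n} = \frac{\dt\veps}{(\veps+\dt z_{\min}^\mathfrak{n})^2}$ is merely the expansion of $\frac{\dt}{\veps}(\gamma^\mathfrak{n})^2$ with $\gamma^\mathfrak{n} = \frac{\veps}{\veps+\dt z_{\min}^\mathfrak{n}}$.

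Since every ingredient is in place, there is no genuine obstacle at this step: the real work lives in the appendix, in the resolvent bound for $\|\Delm{M}\|_2$ (which exploits the uniform spectral gap $z_{\min}^\mathfrak{n}>0$ from \Cref{prop:z_min_defs}) and in the constant $C_Z^\mathfrak{n}$ (which comes from differentiating the square-root collision frequency and is where the non-Lipschitz behavior at zero temperature is avoided via the positive lower bound \eqref{eq:tempBound}). The only point requiring care is bookkeeping of norms — keeping the Frobenius norm on the $N\times d$ quantities $\bfW$ and $\bfA$, the operator $2$-norm on the $N\times N$ matrices $Z$ and $M$, and invoking the compatibility $\froNorm{MY}\leq\|M\|_2\froNorm{Y}$ of \Cref{lemma:simple_norms} in the correct direction.
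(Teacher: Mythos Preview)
Your proposal is correct and follows exactly the paper's approach: the paper's proof is a one-line citation of \eqref{eq:W_diff}, \eqref{eq:A_difference_bound}, and \eqref{eq:Z_diff_bound}, and you have simply unpacked that chain (with the additional explicit mention of \Cref{lemma:nullSpaceIter}, \Cref{lemma:simple_norms}, \Cref{lemma:W_RangeSpaceIterates}, and \Cref{lemma:inverse_norms} that feed into those equations). There is nothing to add or correct.
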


    \begin{proof}
The result follows using \eqref{eq:W_diff}, \eqref{eq:A_difference_bound}, and \eqref{eq:Z_diff_bound}.
    \end{proof}

    \section{Analysis of  Temperature Iterations}
    \label{section:temperatureProof}
    
In this section, we analyze the temperature iterations; the results are summarized by \Cref{theorem:TemperatureError}.
The basic strategy is similar to that of the previous section, but there are important differences.
Denote the null space and range space of $\what{Z}^\ell$ by $\what{\mathcal{N}}$ and $\what{\mathcal{R}}$, respectively, and let $\bfeta^\ell=\bfeta_\mathcal{N}^\ell+\bfeta_\mathcal{R}^\ell$, where $\bfeta_\mathcal{N}^\ell \in \what{\mathcal{N}}$ and $\bfeta_\mathcal{R}^\ell \in \what{\mathcal{R}}$.
As before, the fact that $\what{\mathcal{R}} \perp \what{\mathcal{N}}$ implies that $\|\Delp{\bfeta}\|_2^2 = \|\Delp{\bfeta_\mathcal{N}}\|_2^2 + \|\Delp{\bfeta_\mathcal{R}}\|_2^2$. 
However, unlike $\bfW^\ell_\mathcal{N}$, the iterates $\bfeta^\ell_\mathcal{N}$ depend on $\ell$, and thus $\|\bfeta^{\ell+1}_\mathcal{N}-\bfeta^\ell_\mathcal{N}\|_2$ must be bounded.
In addition, there are source terms in the update for $\bfeta^\ell$ that depend on velocity iterates and require technical bounds.

    \subsection{Temperature Iterates on Null Space}

    \begin{lemma}[\cite{asymptoticRelaxation}, Sec. 4]
The null space $\what{\mathcal{N}}$ and range space $\what{\mathcal{R}}$ of $\what{Z}^\ell$ are independent of $\ell$.
The null space has dimension one and is spanned by the vector $Q^{\sfrac{1}{2}}\mathbf{1}$.
    \end{lemma}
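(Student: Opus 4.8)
The plan is to mirror the argument behind \Cref{lem:mullSpaceInvariance}, exploiting the fact that $F^\ell - B^\ell$ is a weighted graph Laplacian. First I would record that $\what{Z}^\ell$ is symmetric (already noted above), so its null space and range space are mutually orthogonal; it therefore suffices to pin down $\what{\mathcal{N}}$, and the statement for $\what{\mathcal{R}} = \what{\mathcal{N}}^\perp$ follows by taking orthogonal complements.

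Next I would observe that $F^\ell - B^\ell$ has zero row sums: since $[F]_{i,i}^\ell = \sum_k B_{i,k}^\ell$, one has $(F^\ell - B^\ell)\mathbf{1} = \mathbf{0}$. Because $Q^{-\sfrac{1}{2}}\bigl(Q^{\sfrac{1}{2}}\mathbf{1}\bigr) = \mathbf{1}$, this yields $\what{Z}^\ell\bigl(Q^{\sfrac{1}{2}}\mathbf{1}\bigr) = Q^{-\sfrac{1}{2}}(F^\ell - B^\ell)\mathbf{1} = \mathbf{0}$, so $Q^{\sfrac{1}{2}}\mathbf{1} \in \what{\mathcal{N}}$ for every $\ell$.

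For the reverse inclusion and the dimension count, I would use that the off-diagonal weights $B_{i,j}^\ell = c_{i,j}^B \Psi_{i,j}^\ell$ (for $i \neq j$) are strictly positive: by \eqref{eq:tempBound} in \Cref{prop:GST_properties} the iterates satisfy $T_i^\ell \geq T_{\min}^{\mathfrak{n}} > 0$, hence $\Psi_{i,j}^\ell = \sqrt{T_i^\ell/m_i + T_j^\ell/m_j} > 0$, while $c_{i,j}^B > 0$ by \Cref{definition:A_ij_B_ij}. The standard connectedness argument then applies: if $(F^\ell - B^\ell)\bfx = \mathbf{0}$, pick $i$ with $x_i = \max_k x_k$; the $i$-th equation reads $\sum_{j \neq i} B_{i,j}^\ell (x_i - x_j) = 0$, a sum of nonnegative terms with strictly positive coefficients, which forces $x_j = x_i$ for all $j$. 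Thus $\ker(F^\ell - B^\ell) = \mathrm{span}\{\mathbf{1}\}$, and since $Q^{\sfrac{1}{2}}$ is invertible, $\what{\mathcal{N}} = \ker \what{Z}^\ell = \mathrm{span}\{Q^{\sfrac{1}{2}}\mathbf{1}\}$, which is one-dimensional and independent of $\ell$; by symmetry, so is $\what{\mathcal{R}}$.

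The only point requiring care — and the closest thing to an obstacle — is the strict positivity of the edge weights $B_{i,j}^\ell$. This is precisely where the a priori bound \eqref{eq:tempBound} on the temperature iterates is essential: without a uniform positive lower bound on the $T_i^\ell$, the factor $\Psi_{i,j}^\ell$ could degenerate, the underlying weighted graph could disconnect, and the kernel dimension could jump. Everything else is bookkeeping with the invertible change of variables $Q^{\sfrac{1}{2}}$ and the symmetry of $\what{Z}^\ell$.
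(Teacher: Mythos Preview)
Your argument is correct: the graph-Laplacian structure of $F^\ell - B^\ell$ with strictly positive off-diagonal weights forces a one-dimensional kernel spanned by $\mathbf{1}$, and the conjugation by $Q^{\sfrac{1}{2}}$ transfers this to $\what{Z}^\ell$ exactly as you describe. Note, however, that the paper does not prove this lemma in-line; it is quoted from \cite{asymptoticRelaxation}, Section~4, and \Cref{lem:mullSpaceInvariance} (its $Z^\ell$ analog) is likewise cited rather than proved. Your write-up is therefore a self-contained substitute for the external reference rather than a reproduction of anything in the present paper, and the connectedness/maximum-entry argument you give is precisely the standard route for this kind of result---almost certainly what the cited source does as well. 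Your identification of the temperature lower bound \eqref{eq:tempBound} as the crucial non-degeneracy input is spot on and worth emphasizing, since the paper's own statement of \Cref{prop:z_min_defs} explicitly invokes \eqref{eq:tempBound} for the same reason.
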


    \begin{lemma}
    \label{lemma:TemperatureNullError}
The null space iterations $\bfeta^\ell_\mathcal{N}$ satisfy
    \begin{gather}
\left\|
  \Delp{(\bfeta_\mathcal{N})}
\right\|_2
  \leq
    C_0^\mathfrak{n}\Gamma_{\bfW}^\mathfrak{n}
    \left\|
      \Delm{\bfeta}
    \right\|_2,
\qqwhere
    \\
C_0^\mathfrak{n}
  =
    \frac{\sqrt{2}b}{d}\rho_{\max}^{\sfrac{1}{2}}u_{\max}^\mathfrak{n}
    N^{\sfrac{1}{2}}C_{\bfW}^\mathfrak{n}
,\qquad
b 
  =
    \|Q^{\sfrac{1}{2}}\mathbf{1}\|_2^{-1}
,\qquad
\Gamma_{\bfW}^\mathfrak{n}
  =
    \frac{\dt\veps}{(\veps+\dt z_{\min}^\mathfrak{n})^2}
.
    \end{gather}
    \end{lemma}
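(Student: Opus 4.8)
The plan is to mimic the null-space computation in Lemma~\ref{lemma:nullSpaceIter}, but keeping track of the fact that the source term $\frac{1}{d}Q^{-\sfrac{1}{2}}\bfs^{(1),\ell+1}$ in \eqref{eq:ETAUpdateIterative} no longer projects trivially onto $\what{\mathcal{N}}$. Since $\what{\mathcal{N}}$ is spanned by the unit vector $b\,Q^{\sfrac{1}{2}}\mathbf{1}$ (with $b = \|Q^{\sfrac{1}{2}}\mathbf{1}\|_2^{-1}$), the null-space component is $\bfeta_\mathcal{N}^{\ell+1} = b^2 (\mathbf{1}^\top Q^{\sfrac{1}{2}} \bfeta^{\ell+1}) Q^{\sfrac{1}{2}} \mathbf{1}$. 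I would apply $\mathbf{1}^\top Q^{\sfrac{1}{2}}$ to \eqref{eq:ETAUpdateIterative}: the matrix term dies because $\mathbf{1}^\top Q^{\sfrac{1}{2}} \what{Z}^\ell = \mathbf{0}^\top$ (as $Q^{\sfrac{1}{2}}\mathbf{1}$ spans $\what{\mathcal{N}}$ and $\what{Z}^\ell$ is symmetric), and the $\bfs^{(2)}$ term dies because, by discrete energy conservation (Proposition~\ref{prop:GST_conservation_laws}) combined with the argument used in Proposition~\ref{prop:discreteConsLaws}, $\sum_i \left[\bfs^{(2),\ell+1,\ell}\right]_i = 0$ — indeed that sum equals $\sum_{i,j} B_{i,j}^\ell(m_j - m_i)(\,\cdots\,)$ type expression that cancels under $i\leftrightarrow j$ symmetry of $B$. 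So only the $\bfeta^{\mathfrak{n}}$ and $\bfs^{(1),\ell+1}$ contributions survive, giving
\begin{equation}
\mathbf{1}^\top Q^{\sfrac{1}{2}} \bfeta^{\ell+1}
  = \mathbf{1}^\top Q^{\sfrac{1}{2}} \bfeta^{\mathfrak{n}}
  + \frac{1}{d}\mathbf{1}^\top \bfs^{(1),\ell+1}
.
\end{equation}
Subtracting the analogous identity at level $\ell$ and multiplying by $b^2 Q^{\sfrac{1}{2}}\mathbf{1}$ yields $\Delp{(\bfeta_\mathcal{N})} = \frac{b^2}{d}\bigl(\mathbf{1}^\top \Delp{(\bfs^{(1)})}\bigr) Q^{\sfrac{1}{2}}\mathbf{1}$, so that $\|\Delp{(\bfeta_\mathcal{N})}\|_2 = \frac{b}{d}\,\bigl|\mathbf{1}^\top \Delp{(\bfs^{(1)})}\bigr| \le \frac{b}{d}\sqrt{N}\,\|\Delp{(\bfs^{(1)})}\|_2$.

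It then remains to bound $\|\Delp{(\bfs^{(1)})}\|_2$ in terms of $\froNorm{\Delp{\bfW}}$. From \eqref{eq:source_term1}, $\bfs^{(1),\ell+1} = \bfs_{\bfW}^{\mathfrak{n}} - \bfs_{\bfW}^{\ell+1}$, so $\Delp{(\bfs^{(1)})} = -(\bfs_{\bfW}^{\ell+1} - \bfs_{\bfW}^{\ell}) = -\Delp{(\bfs_{\bfW})}$, whose $i$-th component is $|\bfw_i^{\ell+1}|^2 - |\bfw_i^{\ell}|^2 = (\bfw_i^{\ell+1} - \bfw_i^{\ell})\cdot(\bfw_i^{\ell+1} + \bfw_i^{\ell})$. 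Applying Cauchy–Schwarz componentwise and then summing, $\|\Delp{(\bfs_{\bfW})}\|_2 \le \max_i \|\bfw_i^{\ell+1} + \bfw_i^{\ell}\|_2 \cdot \froNorm{\Delp{\bfW}} \le 2 \max_i \sup_\ell \|\bfw_i^\ell\|_2 \cdot \froNorm{\Delp{\bfW}}$. Since $\bfw_i = \sqrt{\rho_i}\,\bfu_i$, the bound \eqref{eq:u_max_def} gives $\|\bfw_i^\ell\|_2 \le \rho_{\max}^{\sfrac{1}{2}} u_{\max}^{\mathfrak{n}}$, so $\|\Delp{(\bfs_{\bfW})}\|_2 \le 2\rho_{\max}^{\sfrac{1}{2}} u_{\max}^{\mathfrak{n}}\,\froNorm{\Delp{\bfW}}$. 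Chaining this with the previous display and with Theorem~\ref{theorem:VelocityError}, $\froNorm{\Delp{\bfW}} \le C_{\bfW}^{\mathfrak{n}} \Gamma_{\bfW}^{\mathfrak{n}} \|\Delm{\bfeta}\|_2$, produces
\begin{equation}
\|\Delp{(\bfeta_\mathcal{N})}\|_2
  \le \frac{b}{d}\sqrt{N}\cdot 2\rho_{\max}^{\sfrac{1}{2}} u_{\max}^{\mathfrak{n}} \cdot C_{\bfW}^{\mathfrak{n}} \Gamma_{\bfW}^{\mathfrak{n}} \|\Delm{\bfeta}\|_2
  = C_0^{\mathfrak{n}} \Gamma_{\bfW}^{\mathfrak{n}} \|\Delm{\bfeta}\|_2
,
\end{equation}
which is the claimed inequality once the constant $\sqrt{2}$ in the stated $C_0^{\mathfrak{n}}$ is reconciled with the $2$ here — I would expect the paper either absorbs a factor from a slightly different Cauchy–Schwarz split ($\|\bfw_i^{\ell+1}+\bfw_i^{\ell}\|_2^2 \le 2\|\bfw_i^{\ell+1}\|_2^2 + 2\|\bfw_i^{\ell}\|_2^2$) or tightens the bound on $|\mathbf{1}^\top\Delp{(\bfs^{(1)})}|$ differently; either way the structure is identical and only the numerical constant shifts.

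The main obstacle is the vanishing of $\sum_i [\bfs^{(2),\ell+1,\ell}]_i$: this is the one place where one must invoke the discrete energy conservation of the GST iterates (Proposition~\ref{prop:GST_conservation_laws}) rather than a purely algebraic manipulation, and it requires expanding the definition \eqref{eq:source_term2} and re-deriving the cancellation $\sum_{i,j} B_{i,j}^\ell \bigl[m_j(s_j^{\ell+1} - S_{i,j}^{\ell+1}) - m_i(s_i^{\ell+1} - S_{i,j}^{\ell+1})\bigr] = 0$ by swapping $i \leftrightarrow j$ and using $B_{i,j}^\ell = B_{j,i}^\ell$ and $S_{i,j}^{\ell+1} = S_{j,i}^{\ell+1}$. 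Everything else is a direct combination of already-proved norm bounds.
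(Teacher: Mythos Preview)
Your proof is correct and essentially identical to the paper's: project \eqref{eq:ETAUpdateIterative} onto $\what{\bfv}_0 = bQ^{\sfrac{1}{2}}\mathbf{1}$, use the $i\leftrightarrow j$ symmetry of $B$ to kill the $\bfs^{(2)}$ term (this is indeed purely algebraic, not an appeal to energy conservation), and then bound $|\mathbf{1}^\top\Delp{(\bfs_{\bfW})}|$ via Cauchy--Schwarz and \Cref{theorem:VelocityError}. Your constant $2$ matches the paper's derivation exactly --- the $\sqrt{2}$ in the stated $C_0^{\mathfrak{n}}$ appears to be a typo, since the paper's own proof also produces $\frac{2b}{d}\rho_{\max}^{\sfrac{1}{2}}u_{\max}^{\mathfrak{n}}N^{\sfrac{1}{2}}C_{\bfW}^{\mathfrak{n}}$.
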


    \begin{proof}
Write $\bfeta_\mathcal{N}^{\ell+1}=b_0^{\ell+1}\what{\bfv}_0,$ where $ \what{\bfv}_0 = b Q^{\sfrac{1}{2}}\mathbf{1}$ satisfies $\|\what{\bfv}_0\|_2=1$.
Then
    \begin{equation}
\left\|
  \Delp{(\bfeta_\mathcal{N})}
\right\|_2
  =
    \left\|
      \Delp{(b_0)}\what{\bfv}_0
    \right\|_2
  =
    \left|
      \Delp{(b_0)}
    \right|
.
    \end{equation}
Now multiply \eqref{eq:ETAUpdateIterative} by $\what{\bfv}_0^\top$.
Since $\what{\mathcal{R}} \perp \what{\mathcal{N}}$, the left-hand side gives
    \begin{equation}
    \label{eq:b0_lhs}
\what{\bfv}_0^\top
  \left(
    I+\frac{\dt}{\veps}\what{Z}^\ell
  \right)
  (b_0^{\ell+1}\what{\bfv}_0+\bfeta_\mathcal{R}^{\ell+1}) 
  =
    b_0^{\ell+1} \what{\bfv}_0^\top \what{\bfv}_0 
  =
    b_0^{\ell+1}
.
    \end{equation}
Meanwhile, the right-hand side gives
    \begin{multline}
    \label{eq:b0_rhs}
\left(
  \what{\bfv}_0,\bfeta^{\mathfrak{n}}
\right)_2
  +
    \frac{1}{d}
    \left(
      \what{\bfv}_0,Q^{-\sfrac{1}{2}}
      \left(
        \bfs_{\bfW}^{\mathfrak{n}}-\bfs_{\bfW}^{\ell+1}
      \right)
    \right)_2
  +
    \frac{\dt}{\veps}\frac{1}{d}
    \left(
      \what{\bfv}_0,Q^{-\sfrac{1}{2}}\bfs^{(2),\ell+1,\ell}
    \right)_2
    \\
  =
    \left(
      \what{\bfv}_0,\bfeta^{\mathfrak{n}}
    \right)_2
  + 
    \frac{1}{d}
    \left(
      \what{\bfv}_0,Q^{-\sfrac{1}{2}}
      \left(
        \bfs_{\bfW}^{\mathfrak{n}}-\bfs_{\bfW}^{\ell+1}
      \right)
    \right)_2
,
    \end{multline}
where the second source term vanishes due to the symmetry of $B$.
Specifically, using \eqref{eq:source_term2} and the definition of $\what{\bfv}_0$,
    \begin{equation}
\left(
  \what{\bfv}_0,Q^{-\sfrac{1}{2}}\bfs^{(2),\ell+1,\ell}
\right)_2
  =
    b\sum_{i,j} B_{i,j}^\ell
      \left[
        m_j(s_j^{\ell+1}-S_{i,j}^{\ell+1}) - m_i(s_i^{\ell+1}-S_{i,j}^{\ell+1})
      \right]
  = 0
.
   \end{equation}
Combining \eqref{eq:b0_lhs} and \eqref{eq:b0_rhs} gives
    \begin{equation}
b_0^{\ell+1}
  = 
    \left(
      \what{\bfv}_0,\bfeta^{\mathfrak{n}}
    \right)_2
  + 
    \frac{b}{d}
    \left(
      \mathbf{1},\bfs_{\bfW}^{\mathfrak{n}}
    \right)_2
  - 
    \frac{b}{d}
    \left(
      \mathbf{1},\bfs_{\bfW}^{\ell+1}
    \right)_2
.
    \end{equation}
Therefore, with the definition of $\bfs_{\bfW}$ in \eqref{eq:source_term1}, 
    \begin{equation}
    \begin{split}
\left|
  \Delp{(b_0)}
\right| 
  &=
    \frac{b}{d}
    \left|
      \left(
        \mathbf{1},\Delp{(\bfs_{\bfW})}
      \right)_2
    \right|
  \leq
    \frac{b}{d}\sum_{i=1}^N
    \left|
      \;
      \left\|
        \bfw_i^{\ell+1}
      \right\|_2^2
      -
      \left\|
        \bfw_i^\ell
      \right\|_2^2
      \;
    \right|
    \\
  &\leq
    \frac{b}{d}\sum_{i=1}^N
    \left\|
      \Delp{(\bfw_i)}
    \right\|_2
    \left\|
      \bfw_i^{\ell+1}+\bfw_i^\ell
    \right\|_2 
  \leq
    \frac{2b}{d}\rho_{\max}^{\sfrac{1}{2}}u_{\max}^\mathfrak{n}
    \sum_{i=1}^N
    \left\|
      \Delp{(\bfw_i)}
    \right\|_2
    \\
  &\leq
    \frac{2b}{d}\rho_{\max}^{\sfrac{1}{2}}u_{\max}^\mathfrak{n}
    \sqrt{
      N\sum_{i=1}^N
      \left\|
        \Delp{(\bfw_i)}
      \right\|^2_2
    }
  =
    \frac{2b}{d}\rho_{\max}^{\sfrac{1}{2}}u_{\max}^\mathfrak{n}N^{\sfrac{1}{2}} 
    \froNorm{\Delp{\bfW}}
    \\
  &\leq
    \frac{2b}{d}\rho_{\max}^{\sfrac{1}{2}}u_{\max}^\mathfrak{n}N^{\sfrac{1}{2}}
    C_{\bfW}^\mathfrak{n}\Gamma_{\bfW}^\mathfrak{n}
    \left\|
      \Delm{\bfeta}
    \right\|_2
,
    \end{split}
    \end{equation}
where the last line follows from \Cref{theorem:VelocityError}.
    \end{proof}
    
    \subsection{Bounding Source Terms in the Temperature Equation}
    
    \begin{lemma}
    \label{lemma:s_W_Diffs}
The vector $\bfs_{\bfW}^\ell = \left(|\bfw_1^\ell|^2,\cdots,|\bfw_N^\ell|^2\right)^\top$ satisfies the bounds
    \begin{equation}
\|\bfs_{\bfW}^\ell\|_2
  \leq
    (u_{\max}^\mathfrak{n})^2\|P\mathbf{1}\|_2
\qquand
\|\Delp{(\bfs_{\bfW})}\|_2
  \leq
    C_{\bfs}^\mathfrak{n}\Gamma_{\bfW}^\mathfrak{n}
    \|\Delm{\bfeta}\|_2
,
    \end{equation}
where $C_{\bfs}^\mathfrak{n} = 2\rho_{\max}^{\sfrac{1}{2}}u_{\max}^\mathfrak{n}C_{\bfW}^\mathfrak{n}$ and $\Gamma_{\bfW}^\mathfrak{n} = \frac{\dt\veps}{(\veps+\dt z_{\min}^\mathfrak{n})^2}$.
    \end{lemma}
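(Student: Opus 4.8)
The plan is to treat the two bounds separately, both via elementary manipulations of $\bfs_{\bfW}$. For the first bound, write $\|\bfs_{\bfW}^\ell\|_2^2 = \sum_i |\bfw_i^\ell|^4$. By definition $\bfw_i^\ell = \sqrt{\rho_i}\,\bfu_i^\ell$, so $|\bfw_i^\ell|^2 = \rho_i \|\bfu_i^\ell\|_2^2 \leq \rho_i (u_{\max}^\mathfrak{n})^2$ by \eqref{eq:u_max_def} in \Cref{prop:GST_properties}. Hence $|\bfw_i^\ell|^4 \leq \rho_i^2 (u_{\max}^\mathfrak{n})^4$, and summing gives $\|\bfs_{\bfW}^\ell\|_2^2 \leq (u_{\max}^\mathfrak{n})^4 \sum_i \rho_i^2 = (u_{\max}^\mathfrak{n})^4 \|P\mathbf{1}\|_2^2$, which is the claimed bound after taking square roots.

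For the difference bound, the $i$-th component of $\Delp{(\bfs_{\bfW})}$ is $\|\bfw_i^{\ell+1}\|_2^2 - \|\bfw_i^\ell\|_2^2$, and the standard factorization gives $\big|\|\bfw_i^{\ell+1}\|_2^2 - \|\bfw_i^\ell\|_2^2\big| \leq \|\Delp{(\bfw_i)}\|_2 \, \|\bfw_i^{\ell+1} + \bfw_i^\ell\|_2 \leq 2\rho_{\max}^{\sfrac{1}{2}} u_{\max}^\mathfrak{n} \|\Delp{(\bfw_i)}\|_2$, using again the velocity bound together with $\|\bfw_i\|_2 \leq \rho_{\max}^{\sfrac{1}{2}} u_{\max}^\mathfrak{n}$. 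Therefore
\begin{equation}
\|\Delp{(\bfs_{\bfW})}\|_2^2
  =
    \sum_i \big( \|\bfw_i^{\ell+1}\|_2^2 - \|\bfw_i^\ell\|_2^2 \big)^2
  \leq
    4\rho_{\max}(u_{\max}^\mathfrak{n})^2 \sum_i \|\Delp{(\bfw_i)}\|_2^2
  =
    4\rho_{\max}(u_{\max}^\mathfrak{n})^2 \froNorm{\Delp{\bfW}}^2 .
\end{equation}
Taking square roots yields $\|\Delp{(\bfs_{\bfW})}\|_2 \leq 2\rho_{\max}^{\sfrac{1}{2}} u_{\max}^\mathfrak{n} \froNorm{\Delp{\bfW}}$, and then applying \Cref{theorem:VelocityError}, namely $\froNorm{\Delp{\bfW}} \leq C_{\bfW}^\mathfrak{n} \Gamma_{\bfW}^\mathfrak{n} \|\Delm{\bfeta}\|_2$, gives the result with $C_{\bfs}^\mathfrak{n} = 2\rho_{\max}^{\sfrac{1}{2}} u_{\max}^\mathfrak{n} C_{\bfW}^\mathfrak{n}$.

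This proof is essentially bookkeeping: every ingredient is already available. The only mild subtlety is being careful with the relation $\bfw_i = \sqrt{\rho_i}\,\bfu_i$ versus the definition $\bfW = P^{1/2}\bfU$ when passing between $\ell$-dependent quantities, and making sure the crude bound $\|\bfw_i\|_2 \leq \rho_{\max}^{1/2} u_{\max}^\mathfrak{n}$ is used consistently for both factors in the difference estimate. There is no real obstacle; the main point is simply to invoke \Cref{theorem:VelocityError} at the very end to convert the Frobenius-norm velocity increment into a bound in terms of $\|\Delm{\bfeta}\|_2$.
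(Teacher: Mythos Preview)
Your proof is correct and follows essentially the same approach as the paper's: both bounds are obtained by expanding $\|\bfs_{\bfW}^\ell\|_2^2$ componentwise, using the velocity bound \eqref{eq:u_max_def} for the first, factoring the difference of squares and bounding $\|\bfw_i^{\ell+1}+\bfw_i^\ell\|_2$ for the second, and then invoking \Cref{theorem:VelocityError}. The only cosmetic difference is that the paper writes the $i$-th component of $\Delp{(\bfs_{\bfW})}$ as the inner product $(\Delp{(\bfw_i)},\bfw_i^{\ell+1}+\bfw_i^\ell)_2$ before applying Cauchy--Schwarz, which is equivalent to your factorization.
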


    \begin{proof}
The first inequality follows from the bound on the velocity terms \eqref{eq:u_max_def} in \Cref{prop:GST_properties}: 
    \begin{equation}
\|\bfs_{\bfW}^\ell\|_2^2
  =
    \sum_i[\bfs_{\bfW}]_i^2
  =
    \sum_i\rho_i^2
      \|\bfu_i^\ell\|_2^4
  \leq
    (u_{\max}^\mathfrak{n})^4\sum_i\rho_i^2
.
    \end{equation}
For the second inequality,
    \begin{equation}
    \begin{split}
\|\Delp{(\bfs_{\bfW})}\|_2^2
  &=
    \sum_i
    \left|
      \left(
        \Delp{(\bfw_i)},\bfw_i^{\ell+1}+\bfw_i^\ell
      \right)_2
    \right|^2
  \leq
    \sum_i
    \|\Delp{(\bfw_i)}\|_2^2
\;
    \|\bfw_i^{\ell+1}+\bfw_i^\ell\|_2^2
    \\
  &\leq
    4 \rho_{\max}(u_{\max}^\mathfrak{n})^2\froNorm{\Delp{\bfW}}^2
.
    \end{split}
    \end{equation}
Applying the bound in \Cref{theorem:VelocityError} and taking square roots gives the result.
    \end{proof}

    \begin{lemma}
    \label{lemma:Si_Sij_diffs}
    \begin{subequations}
    \begin{align}
    \label{eq:u_i_minus_u_infty}
\left\|
  \bfu_i^{\ell+1}-\bfu_\infty
\right\|_2
  &\leq
    \frac{\froNorm{\bfW^{\mathfrak{n}}}}{\rho_{\min}^{\sfrac{1}{2}}}
    \;\gamma^\mathfrak{n}
,
    \\
    \label{eq:norm_si_minus_s_ij}
\left|
  s_i^{\ell+1}-S_{i,j}^{\ell+1}
\right|
  &\leq
    \frac
        {4\alpha_{j,i}u_{\max}^\mathfrak{n}\froNorm{\bfW^{\mathfrak{n}}}}
        {\rho_{\min}^{\sfrac{1}{2}}}
    \;\gamma^\mathfrak{n}
,
    \end{align}
    \end{subequations}
where $\gamma^\mathfrak{n}$ is defined in \Cref{lemma:inverse_norms}. 
    \end{lemma}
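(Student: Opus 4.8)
The two bounds are both consequences of the uniform estimate $\froNorm{\bfW^{\ell+1}_\mathcal{R}} = \froNorm{\bfA^{\ell+1}} \leq \gamma^\mathfrak{n}\froNorm{\bfW^\mathfrak{n}}$ from \eqref{eq:A_iterate_bound}, combined with the observation that $\bfu^\infty$ is exactly the null-space part of the velocity iterate in the $P$-weighted inner product. First I would prove \eqref{eq:u_i_minus_u_infty}. By \Cref{lemma:nullSpaceIter}, $\bfW^{\ell+1}_\mathcal{N} = P^{\sfrac{1}{2}}\bfU^\infty$, so $P^{\sfrac{1}{2}}(\bfU^{\ell+1} - \bfU^\infty) = \bfW^{\ell+1} - \bfW^{\ell+1}_\mathcal{N} = \bfW^{\ell+1}_\mathcal{R}$. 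Reading off the $i$-th row and using $\|\bfu_i^{\ell+1} - \bfu^\infty\|_2 = \rho_i^{-\sfrac{1}{2}}\|\bfw_{i,\mathcal{R}}^{\ell+1}\|_2 \leq \rho_{\min}^{-\sfrac{1}{2}}\froNorm{\bfW^{\ell+1}_\mathcal{R}}$, then applying \eqref{eq:A_iterate_bound} and \eqref{eq:W_bound}, gives the claimed bound $\froNorm{\bfW^\mathfrak{n}}\,\gamma^\mathfrak{n}/\rho_{\min}^{\sfrac{1}{2}}$.

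For \eqref{eq:norm_si_minus_s_ij} I would first express $s_i^{\ell+1} - S_{i,j}^{\ell+1} = |\bfu_i^{\ell+1}|^2 - |\bfu_{i,j}^{\ell+1}|^2$ and use the convex-combination structure of $\bfu_{i,j}$ from \eqref{eq:MomentDefs}: since $\alpha_{j,i}$ and $\alpha_{i,j}$ are nonnegative and sum to one (see \Cref{lemma:alpha_beta_diffs}), one has $\bfu_{i,j}^{\ell+1} = \alpha_{i,j}\bfu_i^{\ell+1} + \alpha_{j,i}\bfu_j^{\ell+1}$, hence $\bfu_i^{\ell+1} - \bfu_{i,j}^{\ell+1} = \alpha_{j,i}(\bfu_i^{\ell+1} - \bfu_j^{\ell+1})$. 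Then
\begin{equation*}
\left|s_i^{\ell+1} - S_{i,j}^{\ell+1}\right|
  = \left|\left(\bfu_i^{\ell+1} - \bfu_{i,j}^{\ell+1},\, \bfu_i^{\ell+1} + \bfu_{i,j}^{\ell+1}\right)_2\right|
  \leq \alpha_{j,i}\,\|\bfu_i^{\ell+1} - \bfu_j^{\ell+1}\|_2\,\|\bfu_i^{\ell+1} + \bfu_{i,j}^{\ell+1}\|_2.
\end{equation*}
The second factor is at most $2u_{\max}^\mathfrak{n}$ by \eqref{eq:u_max_def} (noting $\bfu_{i,j}^{\ell+1}$ is a convex combination of the $\bfu_k^{\ell+1}$, so it also obeys the bound). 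For the first factor I would write $\bfu_i^{\ell+1} - \bfu_j^{\ell+1} = (\bfu_i^{\ell+1} - \bfu^\infty) - (\bfu_j^{\ell+1} - \bfu^\infty)$ and invoke \eqref{eq:u_i_minus_u_infty} twice to get $\|\bfu_i^{\ell+1} - \bfu_j^{\ell+1}\|_2 \leq 2\froNorm{\bfW^\mathfrak{n}}\gamma^\mathfrak{n}/\rho_{\min}^{\sfrac{1}{2}}$. Multiplying the three pieces together yields the factor $4\alpha_{j,i}u_{\max}^\mathfrak{n}\froNorm{\bfW^\mathfrak{n}}\gamma^\mathfrak{n}/\rho_{\min}^{\sfrac{1}{2}}$, as claimed.

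The only genuine subtlety — and thus the step to handle carefully rather than routinely — is confirming that $\bfu_{i,j}^{\ell+1}$ really is a genuine convex combination with $\ell$-independent weights $\alpha_{i,j}, \alpha_{j,i}$; this requires that the $\lambda$-dependent ratios in \eqref{eq:MomentDefs} simplify in the same way they do in the proof of \Cref{prop:GST_properties}, i.e. that the collision-frequency factors $\Psi_{i,j}^\ell$ cancel between numerator and denominator. This is exactly the content cited as \Cref{lemma:alpha_beta_diffs}, so I would state the reduction explicitly and cite that lemma; everything else is Cauchy–Schwarz and the triangle inequality applied to bounds already in hand.
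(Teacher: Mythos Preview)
Your proposal is correct and follows essentially the same route as the paper: use \Cref{lemma:nullSpaceIter} to identify $\bfW^{\ell+1}_\mathcal{R}=P^{\sfrac{1}{2}}(\bfU^{\ell+1}-\bfU^\infty)$, invoke \eqref{eq:A_iterate_bound} for the first bound, then factor $s_i^{\ell+1}-S_{i,j}^{\ell+1}$ via $\bfu_i^{\ell+1}-\bfu_{i,j}^{\ell+1}=\alpha_{j,i}(\bfu_i^{\ell+1}-\bfu_j^{\ell+1})$, bound $\|\bfu_i^{\ell+1}+\bfu_{i,j}^{\ell+1}\|_2\le 2u_{\max}^\mathfrak{n}$, and insert $\pm\bfu^\infty$ to apply \eqref{eq:u_i_minus_u_infty} twice. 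A small remark: the paper's written proof relaxes $\alpha_{j,i}$ to $\max\{\alpha_{i,j}\}$ in its final display, whereas you (correctly, and in line with the stated inequality) retain the sharper factor $\alpha_{j,i}$; also, the ``subtlety'' you flag really only needs $\alpha_{i,j}+\alpha_{j,i}=1$, not the full $\ell$-independence of \Cref{lemma:alpha_beta_diffs}.
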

    
    \begin{proof}
We first prove \eqref{eq:u_i_minus_u_infty}.
From \eqref{eq:W_bound} and the bound in \eqref{eq:A_iterate_bound},
    \begin{equation}
    \label{eq:velocity_norm_0}
\froNorm{\bfW_\mathcal{R}^{\ell+1}}
  =
    \froNorm{\bfA^{\ell+1}}
  \leq
    \gamma^\mathfrak{n}\froNorm{\bfW^{\mathfrak{n}}}
.
    \end{equation}
Then using \Cref{lemma:nullSpaceIter} and the fact that $\bfW_\mathcal{R}^{\ell+1}=\bfW^{\ell+1}-\bfW_\mathcal{N}$,
    \begin{equation}
\froNorm{\bfU^{\ell+1}-\bfU_\infty}^2
  = 
    \froNorm{
        P^{-\sfrac{1}{2}}
        (\bfW^{\ell+1}-\bfW_\mathcal{N})
    }^2
  \leq
    \frac{1}{\rho_{\min}}\froNorm{\bfW_\mathcal{R}^{\ell+1}}^2
  \overset{\eqref{eq:velocity_norm_0}}{\leq}
    \frac{1}{\rho_{\min}}
    (\gamma^\mathfrak{n})^2\froNorm{\bfW^{\mathfrak{n}}}^2
.
      \end{equation}
Therefore,
    \begin{equation}
\|\bfu_i^{\ell+1}-\bfu_\infty\|_2
  \leq
    \froNorm{\bfU^{\ell+1}-\bfU^\infty}
  \leq
    \gamma^\mathfrak{n}
    \frac
        {\froNorm{\bfW^{\mathfrak{n}}}}
        {\rho_{\min}^{\sfrac{1}{2}}}
.
    \end{equation}
We next prove \eqref{eq:norm_si_minus_s_ij}.
Since $\alpha_{i,j}+\alpha_{j,i}=1$,
    \begin{equation}
s_i^{\ell+1}-S_{i,j}^{\ell+1}
  =
    \alpha_{j,i}
    \left(
      \bfu_i^{\ell+1}+\bfu_{i,j}^{\ell+1},\bfu_i^{\ell+1} - \bfu_j^{\ell+1}
    \right)_2
.
    \end{equation}
Thus, using \eqref{eq:u_i_minus_u_infty},
    \begin{equation}
    \begin{split}
\left|s_i^{\ell+1}-S_{i,j}^{\ell+1}\right|
  &\leq
    \max\{\alpha_{i,j}\}
    \left[
      \left\|
        \bfu_i^{\ell+1}
      \right\|_2
      +
      \left\|
        \bfu_{i,j}^{\ell+1}
      \right\|_2
    \right]
    \left\|
      \bfu_i^{\ell+1} - \bfu^{\infty}+\bfu^{\infty} - \bfu_j^{\ell+1}
    \right\|_2
    \\
  &\leq 4\max\{\alpha_{i,j}\}u_{\max}^\mathfrak{n}
    \left(
      \frac{\froNorm{\bfW^{\mathfrak{n}}}}{\rho_{\min}^{\sfrac{1}{2}}}
    \right)
    \gamma^\mathfrak{n}
.
    \end{split}
    \end{equation}
    \end{proof}
    
    \begin{lemma} 
    \label{lemma:s2_Norm}
For all $\ell \geq 1$,
    \begin{align}
\|\bfs^{(2),\ell,\ell-1}\|_2
  \leq C_{\bfs^{(2)}_0}^\mathfrak{n}
    \gamma^\mathfrak{n},
\qwhere
C_{\bfs^{(2)}_0}^\mathfrak{n}
  =
    \frac
        {8\max\{\alpha_{i,j}\}B_{\max}u_{\max}^\mathfrak{n}m_{\max}N^{\sfrac{1}{2}}(N-1)\froNorm{\bfW^{\mathfrak{n}}}}
        {\rho_{\min}^{\sfrac{1}{2}}}
.
    \end{align}
    \end{lemma}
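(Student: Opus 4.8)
The plan is to bound $\|\bfs^{(2),\ell,\ell-1}\|_2$ by a componentwise estimate that feeds \Cref{lemma:Si_Sij_diffs} into the definition \eqref{eq:source_term2}. Fix $\ell\geq1$, so that $B_{i,j}^{\ell-1}$, $s_j^\ell$, and $S_{i,j}^\ell$ are all defined and \eqref{eq:norm_si_minus_s_ij} applies at superscript $\ell$. First I would apply the triangle inequality to \eqref{eq:source_term2} to split the sum over $j$ and the two bracketed terms:
\[
\bigl| [\bfs^{(2),\ell,\ell-1}]_i \bigr|
  \;\leq\; \sum_j B_{i,j}^{\ell-1}\bigl( m_j\, |s_j^\ell - S_{i,j}^\ell| + m_i\, |s_i^\ell - S_{i,j}^\ell| \bigr).
\]
Then I would bound $B_{i,j}^{\ell-1}\leq B_{\max}$ (the coefficient bound, uniform in $\ell$), $m_i,m_j\leq m_{\max}$, and each of $|s_i^\ell - S_{i,j}^\ell|$, $|s_j^\ell - S_{i,j}^\ell|$ using \eqref{eq:norm_si_minus_s_ij}.

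Two small observations then close the estimate. First, the diagonal term $j=i$ in \eqref{eq:source_term2} vanishes: since $\bfu_{i,i}=\bfu_i$ one has $S_{i,i}=s_i$, so the bracket equals $m_i(s_i^\ell - s_i^\ell) - m_i(s_i^\ell - s_i^\ell)=0$; hence only $N-1$ terms survive, which is where the factor $(N-1)$ comes from. Second, although \Cref{lemma:Si_Sij_diffs} is stated for $|s_i^{\ell+1}-S_{i,j}^{\ell+1}|$, the symmetry $\bfu_{i,j}=\bfu_{j,i}$ (hence $S_{i,j}=S_{j,i}$) lets me apply it with $i$ and $j$ interchanged to control $|s_j^\ell - S_{i,j}^\ell|$ as well; in either application the factor $\alpha_{j,i}$ or $\alpha_{i,j}$ is absorbed into $\max\{\alpha_{i,j}\}$. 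Combining, $|[\bfs^{(2),\ell,\ell-1}]_i| \leq 2(N-1)B_{\max}m_{\max}\bigl(4\max\{\alpha_{i,j}\}u_{\max}^{\mathfrak{n}}\froNorm{\bfW^{\mathfrak{n}}}/\rho_{\min}^{\sfrac{1}{2}}\bigr)\gamma^{\mathfrak{n}}$, a bound independent of $i$. Finally, the elementary inequality $\|\bfx\|_2\leq N^{\sfrac{1}{2}}\max_i|x_i|$ for $\bfx\in\bbR^N$ gives $\|\bfs^{(2),\ell,\ell-1}\|_2\leq N^{\sfrac{1}{2}}\max_i|[\bfs^{(2),\ell,\ell-1}]_i|$, and collecting constants yields exactly $C_{\bfs^{(2)}_0}^{\mathfrak{n}}\gamma^{\mathfrak{n}}$.

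I do not expect a serious analytical obstacle here: once \Cref{lemma:Si_Sij_diffs} is available, the argument is essentially bookkeeping. The two points that need a little care are (i) justifying the uniform-in-$\ell$ coefficient bound $B_{i,j}^{\ell-1}\leq B_{\max}$, which rests on an upper bound for the temperature iterates (from the discrete energy conservation of \Cref{prop:GST_conservation_laws}, since $\tfrac{d}{2}n_iT_i^\ell\leq E_i^\ell\leq\sum_k E_k^{\mathfrak{n}}$) together with the monotonicity of $\Psi_{i,j}$ in the temperatures, complementing the lower bound \eqref{eq:tempBound}; and (ii) reducing $|s_j^\ell - S_{i,j}^\ell|$ to the estimate of \Cref{lemma:Si_Sij_diffs} via the symmetry of $\bfu_{i,j}$, since that lemma directly controls only $s_i - S_{i,j}$. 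Everything else is the triangle inequality and the finite-dimensional norm comparison above.
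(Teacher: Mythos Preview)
Your proposal is correct and follows essentially the same approach as the paper: both arguments feed the bound \eqref{eq:norm_si_minus_s_ij} from \Cref{lemma:Si_Sij_diffs} into the definition \eqref{eq:source_term2}, drop the diagonal $j=i$, bound $B_{i,j}\leq B_{\max}$ and $m_i,m_j\leq m_{\max}$, and collect $N^{\sfrac{1}{2}}(N-1)$ from the counting. The only cosmetic difference is that you package the last step via $\|\bfx\|_2\leq N^{\sfrac{1}{2}}\max_i|x_i|$, whereas the paper expands $\|\bfs^{(2),\ell,\ell-1}\|_2^2$ as a double sum over $j,k\neq i$ and bounds the four cross terms explicitly; both yield exactly the same constant. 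Your remark that the uniform-in-$\ell$ bound $B_{i,j}^{\ell-1}\leq B_{\max}$ needs an upper temperature bound (via \Cref{prop:GST_conservation_laws}) is in fact more careful than the paper, which simply writes $B_{\max}$ without comment.
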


    \begin{proof}
    \begin{equation}
    \begin{split}
\|&\bfs^{(2),\ell,\ell-1}\|_2^2
  =
    \sum_i
    \bigg|
      \sum_{j\neq i}B_{i,j}^\ell
      \big[
        m_j(s_j^{\ell+1}-S_{i,j}^{\ell+1})
        -
        m_i(s_i^{\ell+1}-S_{i,j}^{\ell+1})
      \big]
    \bigg|^2
    \\
  &\leq \sum_{i,j\neq i}\sum_{k\neq i}B_{i,j}B_{i,k}
    \bigg[
      m_i^2
        \big|s_i^{\ell+1}-S_{i,j}^{\ell+1}\big|
        \big|s_i^{\ell+1}-S_{i,k}^{\ell+1}\big| 
      + m_im_j
        \big|s_i^{\ell+1}-S_{i,k}^{\ell+1}\big|
        \big|s_j^{\ell+1}-S_{i,j}^{\ell+1}\big|
    \\
  &\qquad+
      m_im_k
        \big|s_i^{\ell+1}-S_{i,j}^{\ell+1}\big|
        \big|s_k^{\ell+1}-S_{i,k}^{\ell+1}\big| 
      + m_jm_k
        \big|s_j^{\ell+1}-S_{i,j}^{\ell+1}\big|
        \big|s_k^{\ell+1}-S_{i,k}^{\ell+1}\big|
    \bigg]
    \\
  &\leq B_{\max}^2m_{\max}^2 \; 4 \;
    \left(
      \frac
          {4\max\{\alpha_{i,j}\}u_{\max}^\mathfrak{n}\froNorm{\bfW^{\mathfrak{n}}}}
          {\rho_{\min}^{\sfrac{1}{2}}}
      \gamma^\mathfrak{n}
    \right)^2
  N(N-1)^2
.
    \end{split}
    \end{equation}
Combining constants and taking the square root gives the result.
    \end{proof}

    \begin{lemma}
    \label{lemma:s_ell_diffs}
For any $i,j,k\in\{1,\cdots,N\}$,
    \begin{subequations}
    \begin{align}
    \label{eq:sk_diff}
\left|
  s_{k}^{\ell+1}-s_{k}^\ell
\right|
  &\leq
    \frac{2u_{\max}^\mathfrak{n}C_{\bfW}^\mathfrak{n}}{\rho_{\min}^{\sfrac{1}{2}}}\Gamma_{\bfW}^\mathfrak{n}
    \left\|
      \Delm{\bfeta}
    \right\|_2 
,
    \\
\text{and}\qquad
\left|
  S_{i,j}^{\ell+1}-S_{i,j}^\ell
\right|
  &\leq \frac{2u_{\max}^\mathfrak{n}C_{\bfW}^\mathfrak{n}}{\rho_{\min}^{\sfrac{1}{2}}}\Gamma_{\bfW}^\mathfrak{n}
    \left\|
      \Delm{\bfeta}
    \right\|_2
.
    \end{align}
    \end{subequations}
    \end{lemma}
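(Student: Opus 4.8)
The plan is to reduce both estimates to the already-established velocity bound in \Cref{theorem:VelocityError}, by writing each difference of squares as an inner product, controlling one factor by an $\mathcal{O}(u_{\max}^\mathfrak{n})$ term and the other by $\froNorm{\Delp{\bfW}}$.

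For $|s_k^{\ell+1}-s_k^\ell|$, I would write $s_k^{\ell+1}-s_k^\ell = \|\bfu_k^{\ell+1}\|_2^2-\|\bfu_k^\ell\|_2^2 = \left(\Delp{(\bfu_k)},\bfu_k^{\ell+1}+\bfu_k^\ell\right)_2$, so that Cauchy-Schwarz and \eqref{eq:u_max_def} give $|s_k^{\ell+1}-s_k^\ell|\le 2u_{\max}^\mathfrak{n}\,\|\Delp{(\bfu_k)}\|_2$. Since $\bfu_k=\rho_k^{-\sfrac{1}{2}}\bfw_k$ and $\|\bfw_k\|_2\le\froNorm{\bfW}$ (from the Frobenius norm identity), we have $\|\Delp{(\bfu_k)}\|_2\le\rho_{\min}^{-\sfrac{1}{2}}\froNorm{\Delp{\bfW}}$, and then \Cref{theorem:VelocityError} bounds $\froNorm{\Delp{\bfW}}$ by $C_{\bfW}^\mathfrak{n}\Gamma_{\bfW}^\mathfrak{n}\|\Delm{\bfeta}\|_2$; combining the three estimates yields \eqref{eq:sk_diff}.

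For $|S_{i,j}^{\ell+1}-S_{i,j}^\ell|$ I would use the same device applied to $\bfu_{i,j}$. Recall from \eqref{eq:MomentDefs} that $\bfu_{i,j}^\ell=\alpha_{i,j}\bfu_i^\ell+\alpha_{j,i}\bfu_j^\ell$ with $\alpha_{i,j}+\alpha_{j,i}=1$, and that the weights $\alpha_{i,j}$ are independent of $\ell$ by \Cref{lemma:alpha_beta_diffs} (the temperature factor $\Psi_{i,j}$ cancels in the ratio defining $\alpha_{i,j}$). Hence $\Delp{(\bfu_{i,j})}=\alpha_{i,j}\Delp{(\bfu_i)}+\alpha_{j,i}\Delp{(\bfu_j)}$ is again a convex combination, so $\|\Delp{(\bfu_{i,j})}\|_2\le\max\{\|\Delp{(\bfu_i)}\|_2,\|\Delp{(\bfu_j)}\|_2\}\le\rho_{\min}^{-\sfrac{1}{2}}\froNorm{\Delp{\bfW}}$, and likewise $\|\bfu_{i,j}^{\ell+1}+\bfu_{i,j}^\ell\|_2\le 2u_{\max}^\mathfrak{n}$ by convexity together with \eqref{eq:u_max_def}. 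Writing $S_{i,j}^{\ell+1}-S_{i,j}^\ell=\left(\Delp{(\bfu_{i,j})},\bfu_{i,j}^{\ell+1}+\bfu_{i,j}^\ell\right)_2$ and applying \Cref{theorem:VelocityError} as above yields the second bound.

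The only real subtlety is the $\ell$-independence of the convex weights $\alpha_{i,j}$, which is what makes $\Delp{(\bfu_{i,j})}$ a clean convex combination of $\Delp{(\bfu_i)}$ and $\Delp{(\bfu_j)}$ with no extra contribution from the variation of the collision frequencies; this is exactly the content of \Cref{lemma:alpha_beta_diffs}. Everything else is Cauchy-Schwarz combined with the already-proven velocity estimate, so no genuinely new difficulty arises.
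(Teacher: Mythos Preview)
Your proposal is correct and follows essentially the same route as the paper: both proofs write the differences of squares as inner products, bound one factor by $2u_{\max}^{\mathfrak{n}}$ via \eqref{eq:u_max_def} (and its convex extension to $\bfu_{i,j}$), bound the other by $\rho_{\min}^{-\sfrac{1}{2}}\froNorm{\Delp{\bfW}}$, and then invoke \Cref{theorem:VelocityError}. The paper uses the triangle inequality with $\alpha_{i,j}+\alpha_{j,i}=1$ where you use the convex-combination maximum, but this is cosmetic; your identification of \Cref{lemma:alpha_beta_diffs} as the key ingredient for the $S_{i,j}$ bound matches the paper exactly.
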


    \begin{proof}
We first prove \eqref{eq:sk_diff}.
As a simple corollary to \Cref{theorem:VelocityError},
    \begin{equation}
    \label{eq:u_k_ell1_ell_diff}
    \begin{split}
\|\bfu_k^{\ell+1}-\bfu_k^\ell\|_2^2
  &\leq
    \sum_{i=1}^N \|\bfu_i^{\ell+1}-\bfu_i^\ell\|_2^2
  =
    \froNorm{\bfU^{\ell+1}-\bfU^\ell}^2
  =
    \froNorm{P^{-\sfrac{1}{2}}\left(\Delp{\bfW}\right)}^2
    \\
  &\overset{\ref{theorem:VelocityError}}{\leq}
    \frac{1}{\rho_{\min}}
    (C_{\bfW}^\mathfrak{n})^2
    (\Gamma_{\bfW}^\mathfrak{n})^2\left\|\Delm{\bfeta}\right\|_2^2
.
    \end{split}
    \end{equation}
Thus,
    \begin{equation}
\left|
  s_k^{\ell+1}-s_k^\ell
\right|
  = 
    \left|
      \left(
        \bfu_k^{\ell+1}-\bfu_k^\ell
        ,
        \bfu_k^{\ell+1}+\bfu_k^\ell
      \right)_2
    \right|
  \leq
    2u_{\max}^\mathfrak{n}
    \|\bfu_k^{\ell+1}-\bfu_k^\ell\|_2
  \leq
    \frac
        {2u_{\max}^\mathfrak{n}C_{\bfW}^\mathfrak{n}}
        {\rho_{\min}^{\sfrac{1}{2}}}
    \Gamma_{\bfW}^\mathfrak{n}
    \|\Delm{\bfeta}\|_2
,
    \end{equation}
as desired.
Next, note that
    \begin{equation}
    \begin{split}
\left|S_{i,j}^{\ell+1}-S_{i,j}^\ell\right|
  &\;\;=\;\;
    \left|
    \left(
        \bfu_{i,j}^{\ell+1}-\bfu_{i,j}^\ell
        ,
        \bfu_{i,j}^{\ell+1}+\bfu_{i,j}^\ell
    \right)_2
    \right|
  \leq
    2u_{\max}^\mathfrak{n}
    \left\|
      \bfu_{i,j}^{\ell+1}-\bfu_{i,j}^\ell
    \right\|_2
    \\
  &\;\;=\;\;
    2u_{\max}^\mathfrak{n}
    \left\|
      \alpha_{i,j}(\bfu_i^{\ell+1}-\bfu_i^\ell)
      +
      \alpha_{j,i}(\bfu_j^{\ell+1}-\bfu_j^\ell)
    \right\|_2
    \\
  &\;\;\leq\;\;
    2u_{\max}^\mathfrak{n}
    \left[
      \alpha_{i,j}
      \left\|
        \bfu_i^{\ell+1}-\bfu_i^\ell
      \right\|_2
      +
      \alpha_{j,i}
      \left\|
        \bfu_j^{\ell+1}-\bfu_j^\ell
      \right\|_2
    \right]
    \\
  &\overset{\eqref{eq:u_k_ell1_ell_diff}}{\leq}
    2u_{\max}^\mathfrak{n}(\alpha_{i,j}+\alpha_{j,i})\frac{C_{\bfW}^\mathfrak{n}}{\rho_{\min}^{\sfrac{1}{2}}}\Gamma_{\bfW}^\mathfrak{n}
    \|\Delm{\bfeta}\|_2
    \\
  &\;\;=\;\;
    \frac
        {2u_{\max}^\mathfrak{n}C_{\bfW}^\mathfrak{n}}
        {\rho_{\min}^{\sfrac{1}{2}}}
    \Gamma_{\bfW}^\mathfrak{n}
    \|\Delm{\bfeta}\|_2
.
    \end{split}
    \end{equation}
    \end{proof}

    \begin{lemma}
    \label{lemma:s2_diffs}
    \begin{equation}
\left\|
  \bfs^{(2),\ell+1,\ell}
  -
  \bfs^{(2),\ell,\ell-1}
\right\|_2
  \leq
    C_{\bfs^{(2)}_1}^\mathfrak{n}\Gamma_{\bfW}^\mathfrak{n}
    \|
      \Delm{\bfeta}
    \|_2
    +
    C_{\bfs^{(2)}_2}^\mathfrak{n}
    \gamma^\mathfrak{n}
    \|\Delm{\bfeta}\|_2
,
    \end{equation}
for $C_{\bfs^{(2)}_1}^\mathfrak{n} = \frac{6B_{\max}u_{\max}^\mathfrak{n}m_{\max}N(N-1)C_{\bfW}^\mathfrak{n}}{\rho_{\min}^{\sfrac{1}{2}}},$ and $C_{\bfs^{(2)}_2}^\mathfrak{n} = C_B^\mathfrak{n} \frac{16\max\{\alpha_{i,j}\}u_{\max}^\mathfrak{n}m_{\max}(N-1)\froNorm{\bfW^{\mathfrak{n}}}}{\rho_{\min}^{\sfrac{1}{2}}}$.
    \end{lemma}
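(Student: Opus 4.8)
The plan is to expand the difference with the standard product–difference identity, isolating one piece that is driven by the increments of the matrix $B$ and another driven by the increments of the velocity‑dependent moments $s_k$ and $S_{i,j}$. Fix $i$ and write $\xi_{i,j}^{m} = m_j(s_j^{m}-S_{i,j}^{m}) - m_i(s_i^{m}-S_{i,j}^{m})$, so that by \eqref{eq:source_term2} one has $[\bfs^{(2),\ell+1,\ell}]_i = \sum_{j\neq i} B_{i,j}^{\ell}\,\xi_{i,j}^{\ell+1}$, the $j=i$ term vanishing identically. Adding and subtracting $B_{i,j}^{\ell-1}\xi_{i,j}^{\ell+1}$ gives, for $\ell\ge1$,
\[
[\bfs^{(2),\ell+1,\ell}]_i - [\bfs^{(2),\ell,\ell-1}]_i
= \sum_{j\neq i}\bigl(B_{i,j}^{\ell}-B_{i,j}^{\ell-1}\bigr)\xi_{i,j}^{\ell+1}
+ \sum_{j\neq i} B_{i,j}^{\ell-1}\bigl(\xi_{i,j}^{\ell+1}-\xi_{i,j}^{\ell}\bigr)
=: \mathrm{I}_i + \mathrm{II}_i .
\]
I would then estimate $\bigl(\sum_i|\mathrm{I}_i|^2\bigr)^{\sfrac12}$ and $\bigl(\sum_i|\mathrm{II}_i|^2\bigr)^{\sfrac12}$ separately and combine them by the triangle inequality; the first sum yields the $\gamma^{\mathfrak{n}}$‑weighted term and the second the $\Gamma_{\bfW}^{\mathfrak{n}}$‑weighted term.

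For $\mathrm{I}_i$, bound $|\xi_{i,j}^{\ell+1}| \le m_j|s_j^{\ell+1}-S_{i,j}^{\ell+1}| + m_i|s_i^{\ell+1}-S_{i,j}^{\ell+1}|$, use the symmetry $S_{i,j}=S_{j,i}$ to apply \eqref{eq:norm_si_minus_s_ij} of \Cref{lemma:Si_Sij_diffs} to each term, and absorb $m_i,m_j\le m_{\max}$ and $\alpha_{i,j},\alpha_{j,i}\le\max\{\alpha_{i,j}\}$; this gives a bound of the shape $|\xi_{i,j}^{\ell+1}| \le \mathrm{const}\cdot\max\{\alpha_{i,j}\}\,m_{\max}\,u_{\max}^{\mathfrak{n}}\,\froNorm{\bfW^{\mathfrak{n}}}\,\gamma^{\mathfrak{n}}/\rho_{\min}^{\sfrac12}$. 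Coupling this with the entrywise increment bound $|B_{i,j}^{\ell}-B_{i,j}^{\ell-1}| \le C_B^{\mathfrak{n}}\|\Delm{\bfeta}\|_2$ (the $\what Z$‑analogue of \eqref{eq:Z_diff_bound}, established in \Cref{appendix:Z_and_ZHat_diffs}) and summing over the at most $N-1$ nonzero indices $j$, one obtains after collecting the $\mathfrak{n}$‑dependent factors $\bigl(\sum_i|\mathrm{I}_i|^2\bigr)^{\sfrac12} \le C_{\bfs^{(2)}_2}^{\mathfrak{n}}\,\gamma^{\mathfrak{n}}\|\Delm{\bfeta}\|_2$. This is the piece that is only $O(\gamma^{\mathfrak{n}})$ small, which is why it must be kept separate.

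For $\mathrm{II}_i$, write $\xi_{i,j}^{\ell+1}-\xi_{i,j}^{\ell} = m_j(s_j^{\ell+1}-s_j^{\ell}) - m_i(s_i^{\ell+1}-s_i^{\ell}) - (m_j-m_i)(S_{i,j}^{\ell+1}-S_{i,j}^{\ell})$, apply \Cref{lemma:s_ell_diffs} to each of the three increments, and use $m_i+m_j+|m_i-m_j|\le 3m_{\max}$ together with $B_{i,j}^{\ell-1}\le B_{\max}$; summing over $j\ne i$ and then taking the square root of the sum over $i$ (bounding $\sqrt N\le N$) yields $\bigl(\sum_i|\mathrm{II}_i|^2\bigr)^{\sfrac12} \le C_{\bfs^{(2)}_1}^{\mathfrak{n}}\,\Gamma_{\bfW}^{\mathfrak{n}}\|\Delm{\bfeta}\|_2$, the $O(\Gamma_{\bfW}^{\mathfrak{n}})$ piece. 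Adding the two estimates gives the claim. There is nothing deep here beyond the telescoping bookkeeping, which is the one point that requires care: every factor $s^{\ell+1}-S^{\ell+1}$, which is merely $O(\gamma^{\mathfrak{n}})$ and not itself contractive, must be paired with the $B$‑increment $B^{\ell}-B^{\ell-1}$, which carries the extra $\|\Delm{\bfeta}\|_2$, whereas the genuinely contractive factors $s^{\ell+1}-s^{\ell}$ and $S^{\ell+1}-S^{\ell}$ are paired with the bounded weight $B^{\ell-1}$; pairing differently would fail to close the estimate. Everything else is a routine application of \Cref{lemma:Si_Sij_diffs}, \Cref{lemma:s_ell_diffs}, \Cref{theorem:VelocityError}, and the bound on the $B$‑increments.
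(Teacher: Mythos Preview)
Your argument is correct and follows essentially the same route as the paper's proof: define $\xi_{i,j}^{k}$, split the difference via a product--difference identity, bound the $\xi$-increment piece with \Cref{lemma:s_ell_diffs} and the $B$-increment piece with \Cref{lemma:Si_Sij_diffs} together with \Cref{lemma:A_ij_B_ij_Diffs}. The only cosmetic differences are that the paper adds and subtracts $B_{i,j}^{\ell}\xi_{i,j}^{\ell}$ (rather than your $B_{i,j}^{\ell-1}\xi_{i,j}^{\ell+1}$) and first passes to the $1$-norm via $\|\cdot\|_2\le\|\cdot\|_1$ before summing over $i,j$, whereas you stay in the $2$-norm and pick up a $\sqrt{N}$ that you then absorb into $N$; both choices lead to the stated constants.
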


    \begin{proof}
Let $\xi_{i,j}^k = m_j(s_j^k-S_{i,j}^k)-m_i(s_i^k-S_{i,j}^k)$, so that $\left[\bfs^{(2),k_1,k_2}\right]_i = \sum_{j\neq i}B_{i,j}^{k_2}\xi_{i,j}^{k_1}$.
Then using the norm equivalence of the 2- and 1- norms, adding and subtracting $B_{i,j}^\ell\xi_{i,j}^\ell$, and using triangle inequality,
    \begin{equation}
    \label{eq:s2_diff_reference}
    \begin{split}
\left\|
  \bfs^{(2),\ell+1,\ell}
  -
  \bfs^{(2),\ell,\ell-1}
\right\|_2
  &\leq
    \left\|
      \bfs^{(2),\ell+1,\ell}-\bfs^{(2),\ell,\ell-1}
    \right\|_1
  = \sum_{i=1}^N
    \bigg|
      \sum\limits_{\substack{j=1 \\ j\neq i}}^NB_{i,j}^\ell\xi_{i,j}^{\ell+1}-B_{i,j}^{\ell-1}\xi_{i,j}^\ell
    \bigg|
    \\
  &\leq \sum_{i,j\neq i}B_{i,j}^\ell
    \left|
      \xi_{i,j}^{\ell+1}-\xi_{i,j}^\ell
    \right|
    +
    \sum_{i,j\neq i}
    \left|
      B_{i,j}^\ell-B_{i,j}^{\ell-1}
    \right|
    \left|
      \xi_{i,j}^\ell
    \right|
.
    \end{split}
    \end{equation}    
Using \Cref{lemma:s_ell_diffs}, the first term of \eqref{eq:s2_diff_reference} can be bounded as follows:
    \begin{align}
    \nonumber
\sum_{i,j\neq i}&B_{i,j}^\ell
  |\xi_{i,j}^{\ell+1}-\xi_{i,j}^\ell|
  \leq
    B_{\max}^\mathfrak{n}\sum_{i,j\neq i}
    \bigg|
      m_j(s_j^{\ell+1}-S_{i,j}^{\ell+1})
      -
      m_i(s_i^{\ell+1}-S_{i,j}^{\ell+1})
      -
      m_j(s_j^{\ell}-S_{i,j}^{\ell})
      + 
      m_i(s_i^{\ell}-S_{i,j}^{\ell})
    \bigg|
    \\
    \nonumber
  &\leq B_{\max}^\mathfrak{n}m_{\max}\sum_{i,j\neq i}
    \left[
      |s_j^{\ell+1}-s_j^\ell|
      +
      |s_i^{\ell+1}-s_i^\ell|
      +
      |S_{i,j}^{\ell+1}-S_{i,j}^\ell|
    \right]
    \\
    \label{eq:TERM_A}
  &\leq 
    \frac
        {6B_{\max}^\mathfrak{n}u_{\max}^\mathfrak{n}m_{\max}N(N-1)C_{\bfW}^\mathfrak{n}}
        {\rho_{\min}^{\sfrac{1}{2}}}
        \Gamma_{\bfW}^\mathfrak{n}
    \left\|
      \Delm{\bfeta}
    \right\|_2
. 
    \end{align}
Using \Cref{lemma:A_ij_B_ij_Diffs,lemma:Si_Sij_diffs}, the second term of \eqref{eq:s2_diff_reference} can be bounded as follows:
    \begin{equation}
    \label{eq:TERM_B}
    \begin{split}
\sum_{i,j\neq i}
    \left|
      B_{i,j}^\ell-B_{i,j}^{\ell-1}
    \right|
    \left|
      \xi_{i,j}^\ell
    \right|
  &\leq
    C_B^\mathfrak{n}
    \left[
      \gamma^\mathfrak{n}
      \frac
          {8\max\{\alpha_{i,j}\}u_{\max}^\mathfrak{n}m_{\max}\froNorm{\bfW^{\mathfrak{n}}}}
          {\rho_{\min}^{\sfrac{1}{2}}}
    \right]
    \sum_{i,j\neq i}
      \left(
        |\Delm{T_i}| + |\Delm{T_j}|
      \right)
    \\
  &=
    C_B^\mathfrak{n}
    \frac
        {8\max\{\alpha_{i,j}\}m_{\max}u_{\max}^\mathfrak{n}\froNorm{\bfW^n}}
        {\rho_{\min}^{\sfrac{1}{2}}}
    2(N-1)\gamma^\mathfrak{n}
    \|\Delm{\bfeta}\|_2
. 
    \end{split}
    \end{equation}
The result follows upon putting together the terms in \eqref{eq:TERM_A} and \eqref{eq:TERM_B}.
    \end{proof}

    \subsection{Temperature Iterates on Range Space}

    \begin{defn}
    \label{definition:bIteratesVHatMatrix}
Let the columns of $\what{V}=[\what{\bfv}_1,\cdots,\what{\bfv}_{N-1}]\in\bbR^{N\times(N-1)}$ form an orthonormal basis of $\what{\mathcal{R}}$. 
For each $\ell$, let $\bfb^\ell\in\bbR^{N-1}$ be the unique vector of coefficients such that $\bfeta_\mathcal{R}^\ell = \what{V}\bfb^\ell$.
    \end{defn}

    \begin{lemma}
    \label{lemma:SimpleNormsTemperature}
Given $\what{V}\in\bbR^{N\times(N-1)}$ as in \Cref{definition:bIteratesVHatMatrix} and any $\bfy\in\bbR^N$,
    \begin{equation}
\|\what{V}\|_2 = 1
    ,\qquad
\|\what{V}^\top\|_2\leq1
    ,\qquad
\|\what{V}^\top\bfy\|_2\leq \|\bfy\|_2
.
    \end{equation}
    \end{lemma}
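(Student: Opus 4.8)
The plan is to mirror the proof of \Cref{lemma:simple_norms} essentially verbatim: the only property of $V$ used there is that its columns form an orthonormal set, and by \Cref{definition:bIteratesVHatMatrix} the matrix $\what{V}$ has that same property. Equivalently, $\what{V}^\top\what{V} = I_{N-1}$, the identity on $\bbR^{N-1}$. Everything follows from this single identity.

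First I would establish $\|\what{V}\|_2 = 1$. For any $\boldsymbol{z}\in\bbR^{N-1}$ we have $\|\what{V}\boldsymbol{z}\|_2^2 = \boldsymbol{z}^\top\what{V}^\top\what{V}\boldsymbol{z} = \boldsymbol{z}^\top\boldsymbol{z} = \|\boldsymbol{z}\|_2^2$, so $\what{V}$ acts as an isometry from $\bbR^{N-1}$ onto $\what{\mathcal{R}}$; taking the supremum over unit vectors $\boldsymbol{z}$ gives $\|\what{V}\|_2 = 1$.

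For the third inequality, I would write $\|\what{V}^\top\bfy\|_2^2 = \bfy^\top\what{V}\what{V}^\top\bfy$ and observe that $\Pi \coloneqq \what{V}\what{V}^\top$ is symmetric and, using $\what{V}^\top\what{V}=I_{N-1}$, idempotent, so $\Pi$ is the orthogonal projector onto $\what{\mathcal{R}}$. Hence $\bfy^\top\Pi\bfy = \|\Pi\bfy\|_2^2 \leq \|\bfy\|_2^2$, which gives $\|\what{V}^\top\bfy\|_2 \leq \|\bfy\|_2$ for every $\bfy\in\bbR^N$. Taking the supremum over unit $\bfy$ then yields $\|\what{V}^\top\|_2 \leq 1$ (in fact equality holds, but the stated bound is all that is used downstream).

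There is no genuine obstacle here; the statement is elementary linear algebra. The only point worth a moment's care is to invoke orthonormality of the \emph{columns} of $\what{V}$ coming from \Cref{definition:bIteratesVHatMatrix}, rather than any particular feature of the subspace $\what{\mathcal{R}}$ itself — with that observation the argument is identical to that of \Cref{lemma:simple_norms}, and could reasonably be dispatched by simply noting this.
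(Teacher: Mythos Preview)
Your proposal is correct. The paper actually states \Cref{lemma:SimpleNormsTemperature} without proof, treating it as an elementary fact parallel to \Cref{lemma:simple_norms} (which is likewise stated without proof), so your argument from the single identity $\what{V}^\top\what{V}=I_{N-1}$ is exactly the kind of justification the authors are tacitly assuming.
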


    \begin{lemma}
    \label{lemma:bIterates_bDiffs}
The iterates $\bfb^{\ell+1}$ satisfy
    \begin{equation}
    \label{eq:b_iterates}
\bfb^{\ell+1} 
  =
    \what{M}^\ell
    \left[
      \what{V}^\top\bfeta^{\mathfrak{n}}
      +
      \frac{1}{d}\what{V}^\top Q^{-\sfrac{1}{2}}
      \left(
        \bfs_{\bfW}^{\mathfrak{n}}
        -
        \bfs_{\bfW}^{\ell+1}
      \right)
      +
      \frac{\dt}{\veps d}\what{V}^\top Q^{-\sfrac{1}{2}}\bfs^{(2),\ell+1,\ell}
    \right]
,
    \end{equation}
where $\what{M}^\ell = \left(I+\frac{\dt}{\veps}\what{V}^\top\what{Z}^\ell\what{V}\right)^{-1}$.
Therefore
    \begin{equation}
    \label{eq:CauchyErrors}
    \begin{split}
\left\|\Delp{\bfb}\right\|_2
  &\leq
    \left\|
      \Delm{\what{M}}\what{V}^\top
      \left[
        \bfeta^{\mathfrak{n}}
        +
        \frac{1}{d}Q^{-\sfrac{1}{2}}\bfs_{\bfW}^{\mathfrak{n}}
      \right]
    \right\|_2
    \\
  &\quad
    + 
    \frac{1}{d}\left\|
      \what{M}^\ell\what{V}^\top Q^{-\sfrac{1}{2}}\bfs_{\bfW}^{\ell+1}
      -
      \what{M}^{\ell-1}\what{V}^\top Q^{-\sfrac{1}{2}}\bfs_{\bfW}^\ell
    \right\|_2
    \\
  &\quad+ \frac{\dt}{\veps d}
    \left\|
      \what{M}^{\ell}\what{V}^\top Q^{-\sfrac{1}{2}}\bfs^{(2),\ell+1,\ell} 
      -
      \what{M}^{\ell-1}\what{V}^\top Q^{-\sfrac{1}{2}}\bfs^{(2),\ell,\ell-1}
    \right\|_2
.
    \end{split}
    \end{equation}
    \end{lemma}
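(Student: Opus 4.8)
The plan is to obtain \eqref{eq:b_iterates} by projecting the $\bfeta$-update \eqref{eq:ETAUpdateIterative} onto the range space $\what{\mathcal{R}}$, and then to derive \eqref{eq:CauchyErrors} by subtracting the resulting expressions for two consecutive iterates and invoking the triangle inequality. For the first part, I would substitute the orthogonal decomposition $\bfeta^{\ell+1} = \what{V}\bfb^{\ell+1} + \bfeta_\mathcal{N}^{\ell+1}$ into \eqref{eq:ETAUpdateIterative} and multiply on the left by $\what{V}^\top$. Since $\what{\mathcal{N}}$ is the ($\ell$-independent) null space of $\what{Z}^\ell$ and is orthogonal to $\what{\mathcal{R}}$, we have $\what{Z}^\ell\bfeta_\mathcal{N}^{\ell+1} = \mathbf{0}$ and $\what{V}^\top\bfeta_\mathcal{N}^{\ell+1} = \mathbf{0}$; combined with $\what{V}^\top\what{V} = I_{N-1}$ and the identity $\bfs^{(1),\ell+1} = \bfs_{\bfW}^{\mathfrak{n}} - \bfs_{\bfW}^{\ell+1}$ from \eqref{eq:source_term1}, this yields
\[
\left(I + \frac{\dt}{\veps}\what{V}^\top\what{Z}^\ell\what{V}\right)\bfb^{\ell+1}
  = \what{V}^\top\bfeta^{\mathfrak{n}}
    + \frac{1}{d}\what{V}^\top Q^{-\sfrac{1}{2}}\left(\bfs_{\bfW}^{\mathfrak{n}} - \bfs_{\bfW}^{\ell+1}\right)
    + \frac{\dt}{\veps d}\what{V}^\top Q^{-\sfrac{1}{2}}\bfs^{(2),\ell+1,\ell} .
\]
The matrix $\what{V}^\top\what{Z}^\ell\what{V}$ is symmetric, and since $\what{V}$ has full column rank with range $\what{\mathcal{R}}$ meeting the null space of $\what{Z}^\ell$ trivially, it is positive definite, its eigenvalues being exactly the positive eigenvalues $\what{z}_1^\ell\le\cdots\le\what{z}_{N-1}^\ell$ of $\what{Z}^\ell$, all bounded below by $\what{z}_{\min}^\mathfrak{n}>0$ by \Cref{prop:z_min_defs}. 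Hence $I + \frac{\dt}{\veps}\what{V}^\top\what{Z}^\ell\what{V}$ is invertible with inverse $\what{M}^\ell$, and left-multiplying by $\what{M}^\ell$ gives \eqref{eq:b_iterates}.

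For \eqref{eq:CauchyErrors}, I would write out $\bfb^{\ell+1}$ and $\bfb^{\ell}$ from \eqref{eq:b_iterates} (the latter with $\ell\mapsto\ell-1$), subtract, and regroup the resulting terms into three pairs. The pair built from the $\ell$-independent data $\what{V}^\top\bigl[\bfeta^{\mathfrak{n}} + \tfrac{1}{d}Q^{-\sfrac{1}{2}}\bfs_{\bfW}^{\mathfrak{n}}\bigr]$ collapses to $\Delm{\what{M}}$ acting on that data; the pair built from $\bfs_{\bfW}^{\ell+1}$ (paired with $\what{M}^{\ell}$) and $\bfs_{\bfW}^{\ell}$ (paired with $\what{M}^{\ell-1}$) gives the second term; and the pair built from $\bfs^{(2),\ell+1,\ell}$ (paired with $\what{M}^{\ell}$) and $\bfs^{(2),\ell,\ell-1}$ (paired with $\what{M}^{\ell-1}$) gives the third, retaining the prefactor $\tfrac{\dt}{\veps d}$. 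Applying the triangle inequality to this three-term identity is precisely \eqref{eq:CauchyErrors}.

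The argument is essentially bookkeeping; the only point requiring care is the well-definedness of $\what{M}^\ell$, which rests on the positive-definiteness of $\what{V}^\top\what{Z}^\ell\what{V}$, together with keeping the $\what{M}^{\ell}$-versus-$\what{M}^{\ell-1}$ pairing straight in the regrouping. I do not anticipate a genuine obstacle in this lemma itself: its role is to isolate the three difference quantities on the right-hand side of \eqref{eq:CauchyErrors}, which are then estimated separately — using \Cref{lemma:inverse_norms} for $\Delm{\what{M}}$, \Cref{lemma:s_W_Diffs} for the $\bfs_{\bfW}$ differences, and \Cref{lemma:s2_Norm,lemma:s2_diffs} for the $\bfs^{(2)}$ differences — where the substantive work of the section lies.
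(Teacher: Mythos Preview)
Your proposal is correct and follows essentially the same approach as the paper: decompose $\bfeta^{\ell+1}$ into its range and null space parts, left-multiply \eqref{eq:ETAUpdateIterative} by $\what{V}^\top$ to kill the null space component, invert to obtain \eqref{eq:b_iterates}, then subtract consecutive iterates and apply the triangle inequality to get \eqref{eq:CauchyErrors}. The paper's proof is terser and does not spell out the invertibility of $I+\tfrac{\dt}{\veps}\what{V}^\top\what{Z}^\ell\what{V}$, which you justify explicitly; one small correction in your forward-looking remarks is that the bound on $\Delm{\what{M}}$ is handled in \Cref{lemma:MHat}, not \Cref{lemma:inverse_norms}.
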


    \begin{proof}
Write $\bfeta^{\ell+1} = \bfeta_\mathcal{R}^{\ell+1}+\bfeta_\mathcal{N}^{\ell+1} = \what{V}\bfb^{\ell+1}+\bfeta_\mathcal{N}^{\ell+1}$, multiply \eqref{eq:ETAUpdateIterative} by $\what{V}^\top$ to cancel the null space component $\bfeta_\mathcal{N}^{\ell+1}$, then multiply by $\what{M}$ to obtain \eqref{eq:b_iterates}.
To obtain \eqref{eq:CauchyErrors}, take the norm of the difference of consecutive iterates, and apply the triangle inequality.
    \end{proof}

The remainder of this subsection is devoted to bounding the terms in \eqref{eq:CauchyErrors}. 

    \begin{lemma}
    \label{lemma:MHat}
Let $\what{\gamma}^\mathfrak{n} = \frac{\veps}{\veps + \dt\what{z}_{\min}^\mathfrak{n}}$, where $\what{z}_{\min}^\mathfrak{n}>0$ is a lower bound on the positive eigenvalues of $\what{Z}^\ell$, defined in \Cref{prop:z_min_defs}.
Then the matrices $\what{M}^\ell$ and $\Delm{\what{M}}$ satisfy the bounds
    \begin{subequations}
    \begin{align}
    \label{eq:MHat_Norm}
\|\what{M}^\ell\|_2
  &\leq
    \what{\gamma}^\mathfrak{n}
,
    \\
    \label{eq:MHat_Diffs}
\|\Delm{(\what{M}})\|_2
  &\leq
    \Gamma_{\bfeta}^\mathfrak{n}\|\Delm{\what{Z}}\|_2
  \leq
    C_{\what{Z}}^\mathfrak{n}\Gamma_{\bfeta}^\mathfrak{n}\|\Delm{\bfeta}\|_2
,
    \end{align}
    \end{subequations}  
where
    \begin{equation}
C_{\what{Z}}^\mathfrak{n}
  =
    \frac
        {2\max_{i,j}\{c_{i,j}^B\}m_{\max}^{\sfrac{1}{2}}N^{\sfrac{1}{2}}(N^{\sfrac{1}{2}}+1)}
        {\sqrt{2}m_{\min}n_{\min}^{\sfrac{3}{2}}\sqrt{T_{\min}^\mathfrak{n}}}
\qquand
\Gamma_{\bfeta}^\mathfrak{n}
  =
    \frac{\dt}{\veps}\left(\what{\gamma}^\mathfrak{n}\right)^2
.
    \end{equation}
    \end{lemma}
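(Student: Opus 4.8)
The plan is to mirror the proof of \Cref{lemma:inverse_norms} line for line, replacing $Z^\ell$, $V$, $M^\ell$, $z_{\min}^\mathfrak{n}$, $\gamma^\mathfrak{n}$, $\Gamma_{\bfW}^\mathfrak{n}$ by their hatted counterparts $\what{Z}^\ell$, $\what{V}$, $\what{M}^\ell$, $\what{z}_{\min}^\mathfrak{n}$, $\what{\gamma}^\mathfrak{n}$, $\Gamma_{\bfeta}^\mathfrak{n}$, and then to close with the temperature-iterate analog of \eqref{eq:Z_diff_bound}.

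For \eqref{eq:MHat_Norm}: by \Cref{prop:z_min_defs} the symmetric matrix $\what{Z}^\ell$ has a one-dimensional null space and all its nonzero eigenvalues lie in $[\what{z}_{\min}^\mathfrak{n},\what{z}_{\max}^\mathfrak{n}]$. Since the columns of $\what{V}$ form an orthonormal basis of the ($\what{Z}^\ell$-invariant) range space $\what{\mathcal{R}}$, the matrix $\what{V}^\top\what{Z}^\ell\what{V}$ is symmetric positive definite with smallest eigenvalue at least $\what{z}_{\min}^\mathfrak{n}$; hence $I+\frac{\dt}{\veps}\what{V}^\top\what{Z}^\ell\what{V}$ has eigenvalues at least $1+\frac{\dt}{\veps}\what{z}_{\min}^\mathfrak{n}$, and its inverse $\what{M}^\ell$ satisfies $\|\what{M}^\ell\|_2 \le \big(1+\frac{\dt}{\veps}\what{z}_{\min}^\mathfrak{n}\big)^{-1} = \what{\gamma}^\mathfrak{n}$; this is the hatted version of \Cref{appendix:lemma:InverseMatrixNorm}. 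For the first inequality in \eqref{eq:MHat_Diffs}, I apply the resolvent identity $X^{-1}-Y^{-1} = X^{-1}(Y-X)Y^{-1}$ with $X^{-1}=\what{M}^\ell$ and $Y^{-1}=\what{M}^{\ell-1}$; since $Y-X = -\frac{\dt}{\veps}\what{V}^\top(\Delm{\what{Z}})\what{V}$, this gives $\Delm{\what{M}} = -\frac{\dt}{\veps}\what{M}^\ell\what{V}^\top(\Delm{\what{Z}})\what{V}\what{M}^{\ell-1}$, and taking operator norms while using \eqref{eq:MHat_Norm}, $\|\what{V}\|_2=1$, and $\|\what{V}^\top\|_2\le 1$ from \Cref{lemma:SimpleNormsTemperature} yields $\|\Delm{\what{M}}\|_2 \le \frac{\dt}{\veps}(\what{\gamma}^\mathfrak{n})^2\|\Delm{\what{Z}}\|_2 = \Gamma_{\bfeta}^\mathfrak{n}\|\Delm{\what{Z}}\|_2$.

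The remaining (and main) step is the second inequality in \eqref{eq:MHat_Diffs}, that is, the estimate $\|\Delm{\what{Z}}\|_2 \le C_{\what{Z}}^\mathfrak{n}\|\Delm{\bfeta}\|_2$, which I would establish in \Cref{appendix:Z_and_ZHat_diffs} alongside \eqref{eq:Z_diff_bound}. Writing $\what{Z}^\ell = Q^{-\sfrac{1}{2}}(F^\ell-B^\ell)Q^{-\sfrac{1}{2}}$ with $B_{i,j}^\ell = c_{i,j}^B\Psi_{i,j}^\ell$ and $F^\ell$ the associated diagonal row-sum matrix, $\Delm{\what{Z}}$ is controlled by the entrywise differences $|\Psi_{i,j}^\ell-\Psi_{i,j}^{\ell-1}|$. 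The obstacle here is that $\Psi_{i,j}(T_i,T_j)=\sqrt{T_i/m_i+T_j/m_j}$ is not Lipschitz at zero temperature; this is exactly where the uniform lower bound $T_i^\ell \ge T_{\min}^\mathfrak{n}$ from \eqref{eq:tempBound} in \Cref{prop:GST_properties} enters, giving $|\Psi_{i,j}^\ell-\Psi_{i,j}^{\ell-1}| \lesssim (T_{\min}^\mathfrak{n})^{-\sfrac{1}{2}}\big(|\Delm{T_i}|+|\Delm{T_j}|\big)$ with an explicit constant depending on $m_{\min}$ and $m_{\max}$. Converting to $\bfeta$ via $\|\Delm{\bfT}\|_2 \le n_{\min}^{-\sfrac{1}{2}}\|\Delm{\bfeta}\|_2$ (since $\bfeta = Q^{\sfrac{1}{2}}\bfT$), accounting for the $Q^{-\sfrac{1}{2}}$ factors and the $N$-term row sums, and bounding the operator norm by the Frobenius norm, assembles the stated constant $C_{\what{Z}}^\mathfrak{n}$ and completes the proof.
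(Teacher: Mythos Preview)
Your proposal is correct and matches the paper's proof essentially line for line: the paper also invokes \Cref{appendix:lemma:InverseMatrixNorm} for \eqref{eq:MHat_Norm}, the resolvent identity $X^{-1}-Y^{-1}=X^{-1}(Y-X)Y^{-1}$ together with \Cref{lemma:SimpleNormsTemperature} for the first inequality in \eqref{eq:MHat_Diffs}, and defers the bound $\|\Delm{\what{Z}}\|_2\le C_{\what{Z}}^\mathfrak{n}\|\Delm{\bfeta}\|_2$ to \Cref{appendix:Z_and_ZHat_diffs}. The only minor deviation is in that last step: where you suggest bounding the operator norm by the Frobenius norm, the paper instead exploits that $F^\ell-B^\ell$ is positive semi-definite to get $\|\Delm{B}\|_2\le\|\Delm{F}\|_2$ and then uses that $F$ is diagonal; either route yields the required estimate.
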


    \begin{proof}
The bound in \eqref{eq:MHat_Norm} is verified in \Cref{appendix:lemma:InverseMatrixNorm}.
To prove \eqref{eq:MHat_Diffs}, use \eqref{eq:MHat_Norm}, the matrix identity $X^{-1}-Y^{-1}=X^{-1}(Y-X)Y^{-1}$, \Cref{lemma:SimpleNormsTemperature}, and \Cref{appendix:Z_and_ZHat_diffs}.
    \end{proof}

    \begin{lemma}
    \label{lemma:TERM1}
The first term of \eqref{eq:CauchyErrors} satisfies
    \begin{equation}
\left\|
  \Delm{\what{M}}
  \left[
    \bfeta^{\mathfrak{n}} + \frac{1}{d}Q^{-\sfrac{1}{2}}\bfs_{\bfW}^{\mathfrak{n}}
  \right]
\right\|_2 
  \leq
    C_{1,1}^\mathfrak{n}\Gamma_{\bfeta}^\mathfrak{n}
    \left\|
      \Delm{\bfeta}
    \right\|_2
,
    \end{equation}
where $C_{1,1}^\mathfrak{n} = \left\| \bfeta^{\mathfrak{n}} + \frac{1}{d}Q^{-\sfrac{1}{2}}\bfs_{\bfW}^{\mathfrak{n}} \right\|_2 C_{\what{Z}}^\mathfrak{n}$.
    \end{lemma}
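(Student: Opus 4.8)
The plan is to treat \Cref{lemma:TERM1} as an immediate consequence of the operator-norm estimate on $\Delm{\what{M}}$ already recorded in \Cref{lemma:MHat}. First I would set $\bfy^{\mathfrak{n}} \coloneqq \bfeta^{\mathfrak{n}} + \frac{1}{d}Q^{-\sfrac{1}{2}}\bfs_{\bfW}^{\mathfrak{n}}$ and observe that this vector is built only from time-step-$\mathfrak{n}$ data; in particular it carries no dependence on the iteration index $\ell$, so $\|\bfy^{\mathfrak{n}}\|_2$ is a fixed constant, which is precisely the factor singled out in the definition of $C_{1,1}^\mathfrak{n}$.

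Next, by submultiplicativity of the induced $2$-norm,
\[
\left\| \Delm{\what{M}}\, \bfy^{\mathfrak{n}} \right\|_2 \le \left\| \Delm{\what{M}} \right\|_2 \, \| \bfy^{\mathfrak{n}} \|_2 .
\]
I would then substitute the second bound in \eqref{eq:MHat_Diffs}, namely $\| \Delm{(\what{M})} \|_2 \le C_{\what{Z}}^\mathfrak{n} \Gamma_{\bfeta}^\mathfrak{n} \| \Delm{\bfeta} \|_2$, and collect constants: since $\| \bfy^{\mathfrak{n}} \|_2 \, C_{\what{Z}}^\mathfrak{n} = C_{1,1}^\mathfrak{n}$, this yields the asserted inequality.

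There is essentially no obstacle in this particular lemma: all of its analytic content is front-loaded into \Cref{lemma:MHat}, whose proof rests on the resolvent identity $X^{-1}-Y^{-1}=X^{-1}(Y-X)Y^{-1}$, \Cref{lemma:SimpleNormsTemperature}, and the difference estimate for $\what{Z}$ from \Cref{appendix:Z_and_ZHat_diffs}. The only point requiring a moment's care is checking that $\bfy^{\mathfrak{n}}$ is genuinely $\ell$-independent, so that it can be pulled out of the norm as a constant rather than re-bounded at each iteration; beyond that, the proof is pure bookkeeping of constants.
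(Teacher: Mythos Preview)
Your proposal is correct and matches the paper's own proof, which simply reads ``Use \Cref{lemma:MHat}.'' You have spelled out exactly the two-line argument the paper intends: bound the operator norm of $\Delm{\what{M}}$ via \eqref{eq:MHat_Diffs} and absorb the $\ell$-independent vector into the constant $C_{1,1}^\mathfrak{n}$.
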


    \begin{proof}
Use \Cref{lemma:MHat}.
    \end{proof}

    \begin{lemma}
    \label{lemma:TERM2}
The second term of \eqref{eq:CauchyErrors} satisfies
    \begin{equation}
\frac{1}{d}
\left\|
  \what{M}^\ell\what{V}^\top Q^{-\sfrac{1}{2}}\bfs_{\bfW}^{\ell+1}
  -
  \what{M}^{\ell-1}\what{V}^\top Q^{-\sfrac{1}{2}}\bfs_{\bfW}^\ell
\right\|_2
  \leq
    C_{2,1}^\mathfrak{n}\what{\gamma}^\mathfrak{n}\Gamma_{\bfW}^\mathfrak{n}
    \left\|
      \Delm{\bfeta}
    \right\|_2
    +
    C_{2,2}^\mathfrak{n}\Gamma_{\bfeta}^\mathfrak{n}
    \left\|
      \Delm{\bfeta}
    \right\|_2
,
    \end{equation}
where $C_{2,1}^\mathfrak{n} = \frac{C_{\bfs}^\mathfrak{n}}{dn_{\min}^{\sfrac{1}{2}}}$ (with $C_{\bfs}^\mathfrak{n}$ defined in \Cref{lemma:s_W_Diffs}) and $C_{2,2}^\mathfrak{n} = \frac{(u_{\max}^\mathfrak{n})^2\|P\mathbf{1}\|_2C_{\what{Z}}^\mathfrak{n}}{dn_{\min}^{\sfrac{1}{2}}}$.
    \end{lemma}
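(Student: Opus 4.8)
The plan is to reduce the estimate to the building blocks already in hand by inserting a cross term and splitting with the triangle inequality. Writing
\[
\what{M}^\ell\what{V}^\top Q^{-\sfrac{1}{2}}\bfs_{\bfW}^{\ell+1}
  -
  \what{M}^{\ell-1}\what{V}^\top Q^{-\sfrac{1}{2}}\bfs_{\bfW}^{\ell}
  =
    \what{M}^\ell\what{V}^\top Q^{-\sfrac{1}{2}}\,\Delp{(\bfs_{\bfW})}
    +
    \big(\Delm{\what{M}}\big)\what{V}^\top Q^{-\sfrac{1}{2}}\bfs_{\bfW}^{\ell}
,
\]
the first piece isolates the increment of $\bfs_{\bfW}$ with $\what{M}$ frozen at level $\ell$, and the second isolates the increment of $\what{M}$ with $\bfs_{\bfW}$ frozen at level $\ell$. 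Taking the $2$-norm and applying the triangle inequality leaves two terms to bound.

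For the first term I would use submultiplicativity of the operator norm together with $\|\what{M}^\ell\|_2\leq\what{\gamma}^\mathfrak{n}$ from \Cref{lemma:MHat}, $\|\what{V}^\top\|_2\leq1$ from \Cref{lemma:SimpleNormsTemperature}, the identity $\|Q^{-\sfrac{1}{2}}\|_2 = n_{\min}^{-\sfrac{1}{2}}$ (since $Q=\text{diag}\{n_k\}$ is positive diagonal), and the difference bound $\|\Delp{(\bfs_{\bfW})}\|_2\leq C_{\bfs}^\mathfrak{n}\Gamma_{\bfW}^\mathfrak{n}\|\Delm{\bfeta}\|_2$ from \Cref{lemma:s_W_Diffs}; after multiplying by $1/d$, this gives the contribution with constant $C_{2,1}^\mathfrak{n}=C_{\bfs}^\mathfrak{n}/(d\,n_{\min}^{\sfrac{1}{2}})$. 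For the second term I would again use $\|\what{V}^\top\|_2\leq1$ and $\|Q^{-\sfrac{1}{2}}\|_2=n_{\min}^{-\sfrac{1}{2}}$, together with the matrix-difference bound $\|\Delm{\what{M}}\|_2\leq C_{\what{Z}}^\mathfrak{n}\Gamma_{\bfeta}^\mathfrak{n}\|\Delm{\bfeta}\|_2$ from \Cref{lemma:MHat} and the uniform bound $\|\bfs_{\bfW}^{\ell}\|_2\leq(u_{\max}^\mathfrak{n})^2\|P\mathbf{1}\|_2$ from \Cref{lemma:s_W_Diffs}; this yields the contribution with constant $C_{2,2}^\mathfrak{n}=(u_{\max}^\mathfrak{n})^2\|P\mathbf{1}\|_2\,C_{\what{Z}}^\mathfrak{n}/(d\,n_{\min}^{\sfrac{1}{2}})$. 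Summing the two contributions reproduces the stated inequality.

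I do not expect a genuine obstacle here: the lemma is bookkeeping assembled from earlier results. The only points that need care are recording $\|Q^{-\sfrac{1}{2}}\|_2$ correctly and arranging the cross term so that the leftover matrix factor is exactly $\Delm{\what{M}}$, which is precisely what \Cref{lemma:MHat} controls; either way of inserting the cross term works because \Cref{lemma:s_W_Diffs} bounds $\|\bfs_{\bfW}^{k}\|_2$ uniformly in $k$.
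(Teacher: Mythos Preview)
Your proposal is correct and follows essentially the same approach as the paper: add and subtract the cross term $\what{M}^\ell\what{V}^\top Q^{-\sfrac{1}{2}}\bfs_{\bfW}^{\ell}$, split via the triangle inequality, and bound the two pieces using \Cref{lemma:MHat}, \Cref{lemma:SimpleNormsTemperature}, and \Cref{lemma:s_W_Diffs}. The constants and the final inequality match the paper exactly.
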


    \begin{proof}
Add and subtract $\what{M}^\ell\what{V}^\top Q^{-\sfrac{1}{2}}\bfs_{\bfW}^\ell$, factor, use standard inequalities and apply \Cref{lemma:MHat,lemma:s_W_Diffs}:
    \begin{equation}
    \begin{split}
\frac{1}{d}
  &
  \left\|
    \what{M}^\ell\what{V}^\top Q^{-\sfrac{1}{2}}\bfs_{\bfW}^{\ell+1}
    -
    \what{M}^{\ell-1}\what{V}^\top Q^{-\sfrac{1}{2}}\bfs_{\bfW}^\ell
  \right\|_2
    \\
&\quad\leq
  \frac{1}{d}
  \left\|
    \what{M}^\ell
  \right\|
  \left\|
    Q^{-\sfrac{1}{2}}
  \right\|_2
  \left\|
    \Delp{(\bfs_{\bfW})}
  \right\|_2
  +
  \frac{1}{d}
  \left\|
    \Delm{\what{M}}
  \right\|_2
  \left\|
    Q^{-\sfrac{1}{2}}
  \right\|_2
  \left\|
    \bfs_{\bfW}^\ell
  \right\|_2 
    \\
&\quad\leq
  \frac{1}{d}
  \what{\gamma}^\mathfrak{n}
  \frac{1}{n_{\min}^{\sfrac{1}{2}}}C_{\bfs}^\mathfrak{n}
  \Gamma_{\bfW}^\mathfrak{n}
  \left\|
    \Delm{\bfeta}
  \right\|_2 
  +
  \frac
      {\|P\mathbf{1}\|_2(u_{\max}^\mathfrak{n})^2}
      {dn_{\min}^{\sfrac{1}{2}}}
  C_{\what{Z}}^\mathfrak{n}
  \Gamma_{\bfeta}^\mathfrak{n}
  \left\|
    \Delm{\bfeta}
  \right\|_2
.
    \end{split}
    \end{equation}
    \end{proof}

    \begin{lemma}
    \label{lemma:TERM3}
The third term of \eqref{eq:CauchyErrors} satisfies
    \begin{equation}
    \begin{split}
\frac{\dt}{\veps d}
  &\left\|
    \what{M}^\ell\what{V}^\top Q^{-\sfrac{1}{2}}\bfs^{(2),\ell+1,\ell}
    -
    \what{M}^{\ell-1}\what{V}^\top Q^{-\sfrac{1}{2}}\bfs^{(2),\ell,\ell-1}
  \right\|_2
    \\
  &\leq C_{3,1}^\mathfrak{n}\Gamma_{\bfW}^\mathfrak{n}
    \left\|
      \Delm{\bfeta}
    \right\|_2
    +
    C_{3,2}^\mathfrak{n}(\Gamma_{\bfW}^\mathfrak{n}\Gamma_{\bfeta}^\mathfrak{n})^{\sfrac{1}{2}}
    \left\|
      \Delm{\bfeta}
    \right\|_2
    +
    C_{3,3}^\mathfrak{n}\Gamma_{\bfeta}^\mathfrak{n}
    \left\|
      \Delm{\bfeta}
    \right\|_2
,
    \end{split}
    \end{equation}
where
    \begin{equation}
C_{3,1}^\mathfrak{n}
  =
    \frac
        {C_{\bfs_1^{(2)}}^\mathfrak{n}}
        {dn_{\min}^{\sfrac{1}{2}}}
    \frac
        {1-\what{\gamma}^\mathfrak{n}}
        {\what{z}^\mathfrak{n}_{\min}}
,\qquad
C_{3,2}^\mathfrak{n}
  =
    \frac
        {C_{\bfs_2^{(2)}}^\mathfrak{n}}
        {dn_{\min}^{\sfrac{1}{2}}}
,\qquand
C_{3,3}^\mathfrak{n}
  = \frac{C_{\bfs_0^{(2)}}^\mathfrak{n}C_{\what{Z}}^\mathfrak{n}}{dn_{\min}^{\sfrac{1}{2}}}
  \frac{1-\what{\gamma}^\mathfrak{n}}{\what{z}^\mathfrak{n}_{\min}}
.
    \end{equation}
    \end{lemma}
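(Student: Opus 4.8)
The plan is to follow the template of \Cref{lemma:TERM2}, now with $\bfs^{(2)}$ playing the role that $\bfs_{\bfW}$ played there, the only structural difference being that the difference estimate for $\bfs^{(2)}$ in \Cref{lemma:s2_diffs} already carries two terms. First I would insert the cross term $\what{M}^\ell\what{V}^\top Q^{-\sfrac{1}{2}}\bfs^{(2),\ell,\ell-1}$ and apply the triangle inequality together with submultiplicativity, using $\|\what{V}^\top\|_2\le 1$ from \Cref{lemma:SimpleNormsTemperature} and $\|Q^{-\sfrac{1}{2}}\|_2 = n_{\min}^{-\sfrac{1}{2}}$, to obtain
\begin{align*}
\Big\|
  \what{M}^\ell\what{V}^\top Q^{-\sfrac{1}{2}}\bfs^{(2),\ell+1,\ell}
  - \what{M}^{\ell-1}\what{V}^\top Q^{-\sfrac{1}{2}}\bfs^{(2),\ell,\ell-1}
\Big\|_2
  &\leq
    \frac{1}{n_{\min}^{\sfrac{1}{2}}}
    \big\|\what{M}^\ell\big\|_2
    \big\|\bfs^{(2),\ell+1,\ell} - \bfs^{(2),\ell,\ell-1}\big\|_2
    \\
  &\quad
    + \frac{1}{n_{\min}^{\sfrac{1}{2}}}
    \big\|\Delm{\what{M}}\big\|_2
    \big\|\bfs^{(2),\ell,\ell-1}\big\|_2 .
\end{align*}

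For the first term on the right I would bound $\|\what{M}^\ell\|_2\le\what{\gamma}^\mathfrak{n}$ by \Cref{lemma:MHat} and invoke the two-term estimate of \Cref{lemma:s2_diffs}. Multiplying through by the prefactor $\dt/(\veps d)$ and using the elementary identities $\frac{\dt}{\veps}\what{\gamma}^\mathfrak{n} = \frac{1-\what{\gamma}^\mathfrak{n}}{\what{z}_{\min}^\mathfrak{n}}$ and $\frac{\dt}{\veps}\gamma^\mathfrak{n}\what{\gamma}^\mathfrak{n} = (\Gamma_{\bfW}^\mathfrak{n}\Gamma_{\bfeta}^\mathfrak{n})^{\sfrac{1}{2}}$ (both immediate from the definitions of $\gamma^\mathfrak{n}$, $\what{\gamma}^\mathfrak{n}$, $\Gamma_{\bfW}^\mathfrak{n}$, $\Gamma_{\bfeta}^\mathfrak{n}$ in \Cref{lemma:inverse_norms,lemma:MHat}), the $C_{\bfs_1^{(2)}}^\mathfrak{n}\Gamma_{\bfW}^\mathfrak{n}$ part turns into the $C_{3,1}^\mathfrak{n}\Gamma_{\bfW}^\mathfrak{n}$ contribution and the $C_{\bfs_2^{(2)}}^\mathfrak{n}\gamma^\mathfrak{n}$ part into the $C_{3,2}^\mathfrak{n}(\Gamma_{\bfW}^\mathfrak{n}\Gamma_{\bfeta}^\mathfrak{n})^{\sfrac{1}{2}}$ contribution. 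For the second term I would use $\|\Delm{\what{M}}\|_2\le C_{\what{Z}}^\mathfrak{n}\Gamma_{\bfeta}^\mathfrak{n}\|\Delm{\bfeta}\|_2$ from \Cref{lemma:MHat} and $\|\bfs^{(2),\ell,\ell-1}\|_2\le C_{\bfs_0^{(2)}}^\mathfrak{n}\gamma^\mathfrak{n}$ from \Cref{lemma:s2_Norm}; the leftover $\dt/\veps$ combines with $\gamma^\mathfrak{n}$ into a bounded, $\veps$-independent factor of the form $\frac{\dt}{\veps}\gamma^\mathfrak{n} = \dt/(\veps+\dt z_{\min}^\mathfrak{n}) \le 1/z_{\min}^\mathfrak{n}$, leaving a single power of $\Gamma_{\bfeta}^\mathfrak{n}$ and producing the $C_{3,3}^\mathfrak{n}\Gamma_{\bfeta}^\mathfrak{n}$ contribution. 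Adding the three pieces gives the stated inequality; it is valid for $\ell\ge 1$, since $\bfs^{(2),\ell,\ell-1}$ requires $\ell\ge 1$ (consistent with \Cref{lemma:s2_Norm}).

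The only nonroutine part is the bookkeeping of the powers of $\dt/\veps$, $\gamma^\mathfrak{n}$, and $\what{\gamma}^\mathfrak{n}$: after the submultiplicative estimates one must verify that each of the three pieces ends up carrying exactly the advertised $\Gamma$-factor — $\Gamma_{\bfW}^\mathfrak{n}$, $(\Gamma_{\bfW}^\mathfrak{n}\Gamma_{\bfeta}^\mathfrak{n})^{\sfrac{1}{2}}$, and $\Gamma_{\bfeta}^\mathfrak{n}$ — with a coefficient that does not depend on $\veps$. This is precisely what forces the bounded quantities $(1-\what{\gamma}^\mathfrak{n})/\what{z}_{\min}^\mathfrak{n}$ (equivalently $\dt/(\veps+\dt\what{z}_{\min}^\mathfrak{n})$) to be absorbed into $C_{3,1}^\mathfrak{n}$ and $C_{3,3}^\mathfrak{n}$ rather than left over as extra $\Gamma$'s, and it is the reason the mixed term $(\Gamma_{\bfW}^\mathfrak{n}\Gamma_{\bfeta}^\mathfrak{n})^{\sfrac{1}{2}}$, rather than a pure $\Gamma_{\bfW}^\mathfrak{n}$ or $\Gamma_{\bfeta}^\mathfrak{n}$, appears in the middle term.
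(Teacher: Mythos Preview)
Your proposal is correct and follows essentially the same route as the paper: insert the cross term $\what{M}^\ell\what{V}^\top Q^{-\sfrac{1}{2}}\bfs^{(2),\ell,\ell-1}$, split by the triangle inequality, bound the first piece with $\|\what{M}^\ell\|_2\le\what{\gamma}^\mathfrak{n}$ together with \Cref{lemma:s2_diffs}, and the second piece with $\|\Delm{\what{M}}\|_2$ from \Cref{lemma:MHat} together with \Cref{lemma:s2_Norm}. The algebraic identities you quote, $\tfrac{\dt}{\veps}\what{\gamma}^\mathfrak{n}=\tfrac{1-\what{\gamma}^\mathfrak{n}}{\what{z}_{\min}^\mathfrak{n}}$ and $\tfrac{\dt}{\veps}\gamma^\mathfrak{n}\what{\gamma}^\mathfrak{n}=(\Gamma_{\bfW}^\mathfrak{n}\Gamma_{\bfeta}^\mathfrak{n})^{\sfrac{1}{2}}$, are exactly those used in the paper; the only cosmetic difference is that for the $C_{3,3}^\mathfrak{n}$ piece you absorb the leftover $\tfrac{\dt}{\veps}\gamma^\mathfrak{n}$ as $(1-\gamma^\mathfrak{n})/z_{\min}^\mathfrak{n}\le 1/z_{\min}^\mathfrak{n}$, whereas the paper records the (hatted) factor $(1-\what{\gamma}^\mathfrak{n})/\what{z}_{\min}^\mathfrak{n}$---either way an $\veps$-independent constant multiplying $\Gamma_{\bfeta}^\mathfrak{n}$.
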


    \begin{proof}
Add and subtract $\what{M}^\ell\what{V}^\top Q^{-\sfrac{1}{2}}\bfs^{(2),\ell,\ell-1}$, and factor:
    \begin{equation}
    \label{eq:TERM3_reference}
    \begin{split}
&\frac{\dt}{\veps d}
\left\|
  \what{M}^\ell\what{V}^\top Q^{-\sfrac{1}{2}}\bfs^{(2),\ell+1,\ell}
  -
  \what{M}^{\ell-1}\what{V}^\top Q^{-\sfrac{1}{2}}\bfs^{(2),\ell,\ell-1}
\right\|_2
    \\
&\leq
  \frac{\dt}{\veps d}
  \left\|
    \what{M}^\ell\what{V}^\top Q^{-\sfrac{1}{2}}
    \left(
      \bfs^{(2),\ell+1,\ell} - \bfs^{(2),\ell,\ell-1}
    \right)
  \right\|_2
  +
  \frac{\dt}{\veps d}
  \left\|
  \Delm(\what{M})\,
    \what{V}^\top Q^{-\sfrac{1}{2}}\bfs^{(2),\ell,\ell-1}
  \right\|_2
    \end{split}
    \end{equation} 
The first term of \eqref{eq:TERM3_reference} can be bounded using \Cref{lemma:s2_diffs,lemma:MHat}:
    \begin{equation}
    \label{eq:s2_term3_a}
    \begin{split}
\frac{\dt}{\veps d}
  \left\|
    \what{M}^\ell
  \right\|_2
  \left\|
    Q^{-\sfrac{1}{2}}
  \right\|_2
  &\left\|
    \bfs^{(2),\ell+1,\ell} - \bfs^{(2),\ell,\ell-1}
  \right\|_2
    \\
  &\leq \frac{1}{dn_{\min}^{\sfrac{1}{2}}}
  \frac{\dt}{\veps}\what{\gamma}^\mathfrak{n}
  \left[
    C_{\bfs^{(2)}_1}^\mathfrak{n}\Gamma_{\bfW}
    \left\|
      \Delm{\bfeta}
    \right\|_2
    +
    C_{\bfs^{(2)}_2}^\mathfrak{n}\what{\gamma}^\mathfrak{n}
    \left\|
      \Delm{\bfeta}
    \right\|_2
  \right]
    \\
  &= 
    \frac
        {C_{\bfs^{(2)}_1}^\mathfrak{n}}
        {dn_{\min}^{\sfrac{1}{2}}}
    \frac
        {1-\what{\gamma}^\mathfrak{n}}
        {\what{z}_{\min}^\mathfrak{n}}
    \Gamma_{\bfW}
    \left\|
      \Delm{\bfeta}
    \right\|_2
    +
    \frac
        {C_{\bfs^{(2)}_2}^\mathfrak{n}}
        {dn_{\min}^{\sfrac{1}{2}}}
    (\Gamma_{\bfW}\Gamma_{\bfeta})^{\sfrac{1}{2}}
    \left\|
      \Delm{\bfeta}
    \right\|_2
.
    \end{split}
    \end{equation}

The second term of \eqref{eq:TERM3_reference} can be bounded using \Cref{lemma:MHat,lemma:s2_Norm}:
    \begin{equation}
    \label{eq:s2_term3_b}
\frac{\dt}{\veps d}
\left\|
  Q^{-\sfrac{1}{2}}
\right\|_2
\left\|
  \bfs^{(2),\ell,\ell-1}
\right\|_2
\left\|
  \what{M}^\ell - \what{M}^{\ell-1}
\right\|_2
  \leq
    \frac
        {C_{\bfs_0^{(2)}}^\mathfrak{n}}
        {dn_{\min}^{\sfrac{1}{2}}}
    \frac
        {1-\what{\gamma}^\mathfrak{n}}
        {\what{z}_{\min}^\mathfrak{n}}
    C_{\what{Z}}^\mathfrak{n} \Gamma_{\bfeta}^\mathfrak{n}
    \left\|
      \Delm{\bfeta}
    \right\|_2
.
    \end{equation}
Putting together \eqref{eq:s2_term3_a} and \eqref{eq:s2_term3_b}, the result follows.
    \end{proof}

    \subsection{Summary of Temperature Term Analysis}

The results of the above sections are summarized by the following:
    \begin{theorem}
    \label{theorem:TemperatureError}
The temperature iterations satisfy the following bounds:
    \begin{equation}
\|\Delp{\bfeta}\|_2
  \leq 
    \bigg[
        C_0^\mathfrak{n}\Gamma^\mathfrak{n}_{\bfW}
      +
        C_{1,1}^\mathfrak{n}\Gamma^\mathfrak{n}_{\bfeta}
      +
        C_{2,1}^\mathfrak{n}\Gamma^\mathfrak{n}_{\bfW}\what{\gamma}^\mathfrak{n}
      +
        C_{2,2}^\mathfrak{n}\Gamma^\mathfrak{n}_{\bfeta}
      +
        C_{3,1}^\mathfrak{n}\Gamma^\mathfrak{n}_{\bfW}
      +
        C_{3,2}^\mathfrak{n}(\Gamma^\mathfrak{n}_{\bfW}\Gamma^\mathfrak{n}_{\bfeta})^{\sfrac{1}{2}}
      +
        C_{3,3}^\mathfrak{n}\Gamma^\mathfrak{n}_{\bfeta}
    \bigg]
    \|\Delm{\bfeta}\|_2
,
    \end{equation}
where the constants, $C_{(\cdot)}$ can be found using the following table:
    \begin{center}
    \begin{tabular}{|c||c|c|c|c|}
    \hline
        Constant: & 
        $C_0$ & 
        $C_{1,1}$ & 
        $C_{2,1},\; C_{2,2}$ & 
        $C_{3,1},\; C_{3,2},\; C_{3,3}$ \\
    \hline
        Lemma: & 
        \ref{lemma:TemperatureNullError} &
        \ref{lemma:TERM1} &
        \ref{lemma:TERM2} &
        \ref{lemma:TERM3} \\
    \hline
    \end{tabular}
    \end{center}
    \end{theorem}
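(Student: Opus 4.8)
The plan is to reduce the statement to an assembly of the component estimates already established in \Cref{lemma:TemperatureNullError,lemma:TERM1,lemma:TERM2,lemma:TERM3}. First I would use the orthogonal splitting $\bfeta^\ell = \bfeta_\mathcal{N}^\ell + \bfeta_\mathcal{R}^\ell$; since $\what{\mathcal{R}}\perp\what{\mathcal{N}}$ and both subspaces are independent of $\ell$, this gives $\|\Delta^\ell\bfeta\|_2^2 = \|\Delta^\ell(\bfeta_\mathcal{N})\|_2^2 + \|\Delta^\ell(\bfeta_\mathcal{R})\|_2^2$, and hence, by subadditivity of the square root, $\|\Delta^\ell\bfeta\|_2 \leq \|\Delta^\ell(\bfeta_\mathcal{N})\|_2 + \|\Delta^\ell(\bfeta_\mathcal{R})\|_2$. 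The first summand is bounded directly by \Cref{lemma:TemperatureNullError}, contributing the $C_0^{\mathfrak{n}}\Gamma_{\bfW}^{\mathfrak{n}}\|\Delta^{\ell-1}\bfeta\|_2$ term.

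For the range-space part, since the columns of $\what{V}$ form an orthonormal basis of $\what{\mathcal{R}}$ and $\bfeta_\mathcal{R}^\ell = \what{V}\bfb^\ell$, \Cref{lemma:SimpleNormsTemperature} gives $\|\Delta^\ell(\bfeta_\mathcal{R})\|_2 = \|\what{V}\,\Delta^\ell\bfb\|_2 = \|\Delta^\ell\bfb\|_2$. \Cref{lemma:bIterates_bDiffs} then bounds $\|\Delta^\ell\bfb\|_2$ by the sum of the three terms displayed in \eqref{eq:CauchyErrors}, and I would bound these in turn: the first by \Cref{lemma:TERM1} (giving $C_{1,1}^{\mathfrak{n}}\Gamma_{\bfeta}^{\mathfrak{n}}\|\Delta^{\ell-1}\bfeta\|_2$), the second by \Cref{lemma:TERM2} (giving $C_{2,1}^{\mathfrak{n}}\Gamma_{\bfW}^{\mathfrak{n}}\what{\gamma}^{\mathfrak{n}}\|\Delta^{\ell-1}\bfeta\|_2 + C_{2,2}^{\mathfrak{n}}\Gamma_{\bfeta}^{\mathfrak{n}}\|\Delta^{\ell-1}\bfeta\|_2$), and the third by \Cref{lemma:TERM3} (giving $C_{3,1}^{\mathfrak{n}}\Gamma_{\bfW}^{\mathfrak{n}}\|\Delta^{\ell-1}\bfeta\|_2 + C_{3,2}^{\mathfrak{n}}(\Gamma_{\bfW}^{\mathfrak{n}}\Gamma_{\bfeta}^{\mathfrak{n}})^{\sfrac{1}{2}}\|\Delta^{\ell-1}\bfeta\|_2 + C_{3,3}^{\mathfrak{n}}\Gamma_{\bfeta}^{\mathfrak{n}}\|\Delta^{\ell-1}\bfeta\|_2$). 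Adding the four contributions and factoring out $\|\Delta^{\ell-1}\bfeta\|_2$ yields exactly the claimed inequality, with the constants identified as in the table.

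There is essentially no deep obstacle remaining at this stage, because all of the real work — the $\what{M}^\ell$-difference estimate of \Cref{lemma:MHat}, the source-term bounds of \Cref{lemma:s_W_Diffs,lemma:s2_Norm,lemma:s2_diffs}, and the velocity-transfer bound of \Cref{theorem:VelocityError} — is already encapsulated in the cited lemmas, so the argument is a bookkeeping exercise in collecting them. The one point genuinely deserving care, and the reason the temperature analysis is harder than the velocity analysis of \Cref{section:velocityProof}, is that the null-space component $\bfeta_\mathcal{N}^\ell$ is \emph{not} constant in $\ell$ (unlike $\bfW_\mathcal{N}^\ell$), so its increment cannot be discarded and must instead be controlled through its coupling to the velocity iterates; this is exactly what \Cref{lemma:TemperatureNullError} supplies, and it explains why the factor $\Gamma_{\bfW}^{\mathfrak{n}}$ (rather than $\Gamma_{\bfeta}^{\mathfrak{n}}$) appears in the $C_0^{\mathfrak{n}}$ and $C_{3,1}^{\mathfrak{n}}$ contributions. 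I would also note that the estimate, like several of the supporting lemmas, is stated for $\ell\geq 1$ so that $\Delta^{\ell-1}\bfeta$ is well defined.
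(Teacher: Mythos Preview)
Your proposal is correct and follows essentially the same approach as the paper: split $\Delp\bfeta$ into its null- and range-space parts, bound the null-space increment via \Cref{lemma:TemperatureNullError}, identify $\|\Delp(\bfeta_\mathcal{R})\|_2=\|\Delp\bfb\|_2$, and then apply \Cref{lemma:TERM1,lemma:TERM2,lemma:TERM3} to the three terms of \eqref{eq:CauchyErrors}. The paper's own proof is a one-line invocation of exactly these lemmas, so your write-up is simply a more explicit version of the same argument.
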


    \begin{proof}
Since $\|\Delp\bfeta\|_2 \leq \|\Delp\bfeta_\mathcal{N}\|_2 + \|\Delp\bfeta_\mathcal{R}\|_2$, then using \Cref{lemma:TemperatureNullError,lemma:TERM1,lemma:TERM2,lemma:TERM3}, the result follows.
    \end{proof}

    \section{Convergence Criterion and Time Step Selection}
    \label{section:timeStepSelection}

In this section, we use \Cref{theorem:VelocityError} and \ref{theorem:TemperatureError} to establish the time step restriction in \Cref{theorem:mainTheorem_TimeStep}

    \subsection{Preliminaries}

For $\ell\in\mathbb{N}\cup\{0\},$ define the tuple
$\bfX^\ell \coloneqq \left(\bfA^\ell,\bfb^\ell,b_0^\ell\right) \in\bbR^{(N-1)\times d} \times \bbR^{N-1}\times\bbR$ and the norm $\vertiii{\bfX}=\froNorm{\bfA}+\|\bfb\|_2+|b_0|$.

    \begin{theorem}
    \label{theorem:CauchyErrorsX}
The Cauchy errors of the iterates $\bfX^\ell$ satisfy
    \begin{equation}
\vertiii{\Delp{\bfX}}
  \leq
    C_{\bfX}^\mathfrak{n}\Gamma_{\bfX}^\mathfrak{n}(\dt,\veps)
    \vertiii{\Delm{\bfX}}
,
    \end{equation}
where
$\Gamma_{\bfX}^\mathfrak{n} = \frac{\dt\veps}{(\veps+\dt z^\mathfrak{n})^2} ,\, z^\mathfrak{n}=\min\{z_{\min}^\mathfrak{n},\what{z}_{\min}^\mathfrak{n}\}\eqqcolon z$, and 
    \begin{equation}
C_{\bfX}^\mathfrak{n}
  =
    C_{\bfW}^\mathfrak{n}
    + C_0^\mathfrak{n}
    + C_{1,1}^\mathfrak{n}
    + C_{2,1}^\mathfrak{n}
    + C_{2,2}^\mathfrak{n}
    + C_{3,1}^\mathfrak{n}
    + C_{3,2}^\mathfrak{n}
    + C_{3,3}^\mathfrak{n}
.
    \end{equation}
    \end{theorem}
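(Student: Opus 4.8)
The plan is to translate \Cref{theorem:VelocityError,theorem:TemperatureError} into a single estimate on the tuple $\bfX^\ell$ and then consolidate the various decay factors into $\Gamma_{\bfX}^\mathfrak{n}$. First I would express each block of $\vertiii{\Delp{\bfX}} = \froNorm{\Delp{\bfA}} + \|\Delp{\bfb}\|_2 + |\Delp{b_0}|$ in terms of velocity and temperature Cauchy differences. By \Cref{lemma:nullSpaceIter,lemma:simple_norms} (in particular \eqref{eq:W_diff}), $\froNorm{\Delp{\bfA}} = \froNorm{\Delp{\bfW}}$. Writing $\bfeta^\ell = \bfeta_\mathcal{N}^\ell + \bfeta_\mathcal{R}^\ell$ with $\bfeta_\mathcal{N}^\ell = b_0^\ell\what{\bfv}_0$, $\|\what{\bfv}_0\|_2 = 1$, and $\bfeta_\mathcal{R}^\ell = \what{V}\bfb^\ell$ with $\what{V}$ an isometry, the orthogonality $\what{\mathcal{N}} \perp \what{\mathcal{R}}$ gives $|\Delp{b_0}| = \|\Delp{(\bfeta_\mathcal{N})}\|_2$ and $\|\Delp{\bfb}\|_2 = \|\Delp{(\bfeta_\mathcal{R})}\|_2$, and likewise $\|\Delm{\bfeta}\|_2 = (|\Delm{b_0}|^2 + \|\Delm{\bfb}\|_2^2)^{1/2} \leq |\Delm{b_0}| + \|\Delm{\bfb}\|_2 \leq \vertiii{\Delm{\bfX}}$.

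Next I would bound the three blocks in terms of $\|\Delm{\bfeta}\|_2$. \Cref{theorem:VelocityError} gives $\froNorm{\Delp{\bfA}} \leq C_{\bfW}^\mathfrak{n}\Gamma_{\bfW}^\mathfrak{n}\|\Delm{\bfeta}\|_2$; \Cref{lemma:TemperatureNullError} gives $|\Delp{b_0}| \leq C_0^\mathfrak{n}\Gamma_{\bfW}^\mathfrak{n}\|\Delm{\bfeta}\|_2$; and \Cref{lemma:bIterates_bDiffs} combined with the term-by-term bounds of \Cref{lemma:TERM1,lemma:TERM2,lemma:TERM3} gives
\[
\|\Delp{\bfb}\|_2
  \leq
    \bigl(
      C_{1,1}^\mathfrak{n}\Gamma_{\bfeta}^\mathfrak{n}
      + C_{2,1}^\mathfrak{n}\what{\gamma}^\mathfrak{n}\Gamma_{\bfW}^\mathfrak{n}
      + C_{2,2}^\mathfrak{n}\Gamma_{\bfeta}^\mathfrak{n}
      + C_{3,1}^\mathfrak{n}\Gamma_{\bfW}^\mathfrak{n}
      + C_{3,2}^\mathfrak{n}(\Gamma_{\bfW}^\mathfrak{n}\Gamma_{\bfeta}^\mathfrak{n})^{1/2}
      + C_{3,3}^\mathfrak{n}\Gamma_{\bfeta}^\mathfrak{n}
    \bigr)\|\Delm{\bfeta}\|_2 .
\]
Adding the three estimates reproduces the bracketed sum of \Cref{theorem:TemperatureError} plus the single extra term $C_{\bfW}^\mathfrak{n}\Gamma_{\bfW}^\mathfrak{n}$.

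Finally I would collapse the decay factors. Since $z^\mathfrak{n} = \min\{z_{\min}^\mathfrak{n}, \what{z}_{\min}^\mathfrak{n}\}$, one has $\Gamma_{\bfX}^\mathfrak{n} = \dt\veps/(\veps+\dt z^\mathfrak{n})^2 = \max\{\Gamma_{\bfW}^\mathfrak{n},\Gamma_{\bfeta}^\mathfrak{n}\}$, so $\Gamma_{\bfW}^\mathfrak{n} \leq \Gamma_{\bfX}^\mathfrak{n}$ and $\Gamma_{\bfeta}^\mathfrak{n} \leq \Gamma_{\bfX}^\mathfrak{n}$; since $\gamma^\mathfrak{n},\what{\gamma}^\mathfrak{n} \in (0,1]$ and the geometric mean is bounded by the maximum, also $\what{\gamma}^\mathfrak{n}\Gamma_{\bfW}^\mathfrak{n} \leq \Gamma_{\bfX}^\mathfrak{n}$ and $(\Gamma_{\bfW}^\mathfrak{n}\Gamma_{\bfeta}^\mathfrak{n})^{1/2} \leq \Gamma_{\bfX}^\mathfrak{n}$. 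Replacing every decay factor by $\Gamma_{\bfX}^\mathfrak{n}$ and using $\|\Delm{\bfeta}\|_2 \leq \vertiii{\Delm{\bfX}}$ collapses the right-hand side to $C_{\bfX}^\mathfrak{n}\Gamma_{\bfX}^\mathfrak{n}\vertiii{\Delm{\bfX}}$, where $C_{\bfX}^\mathfrak{n} = C_{\bfW}^\mathfrak{n} + C_0^\mathfrak{n} + C_{1,1}^\mathfrak{n} + C_{2,1}^\mathfrak{n} + C_{2,2}^\mathfrak{n} + C_{3,1}^\mathfrak{n} + C_{3,2}^\mathfrak{n} + C_{3,3}^\mathfrak{n}$, as claimed. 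I expect no substantial obstacle here, since all the analytic content is already contained in \Cref{theorem:VelocityError,theorem:TemperatureError} and their supporting lemmas; the one step most prone to slips is verifying that the single factor $\Gamma_{\bfX}^\mathfrak{n}$, built from the $\min$ of the two spectral gaps, dominates each of $\Gamma_{\bfW}^\mathfrak{n}$, $\Gamma_{\bfeta}^\mathfrak{n}$, $\what{\gamma}^\mathfrak{n}\Gamma_{\bfW}^\mathfrak{n}$, and $(\Gamma_{\bfW}^\mathfrak{n}\Gamma_{\bfeta}^\mathfrak{n})^{1/2}$ uniformly in $\dt$ and $\veps$.
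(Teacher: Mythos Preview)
Your proposal is correct and follows essentially the same approach as the paper, whose proof reads in its entirety ``The proof is simply a summary of the results in \Cref{theorem:VelocityError,theorem:TemperatureError}.'' You have simply spelled out that summary in full detail, including the identifications $\froNorm{\Delp{\bfA}}=\froNorm{\Delp{\bfW}}$, $|\Delp{b_0}|=\|\Delp{(\bfeta_\mathcal{N})}\|_2$, $\|\Delp{\bfb}\|_2=\|\Delp{(\bfeta_\mathcal{R})}\|_2$, the bound $\|\Delm{\bfeta}\|_2\leq\vertiii{\Delm{\bfX}}$, and the monotonicity argument showing $\Gamma_{\bfX}^\mathfrak{n}=\max\{\Gamma_{\bfW}^\mathfrak{n},\Gamma_{\bfeta}^\mathfrak{n}\}$ dominates each decay factor.
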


    \begin{proof}
The proof is simply a summary of the results in  \Cref{theorem:VelocityError,theorem:TemperatureError}.
    \end{proof}

    \begin{lemma}
The bound $C_{\bfX}^\mathfrak{n}\Gamma_{\bfX}^\mathfrak{n}\leq r$ is equivalent to $Q_r(\dt) \geq 0$, where
    \begin{equation}
    \label{eq:Q_rDefinition}
Q_{r}(\dt)
  =
    rz^2\dt^ 2 + (2rz - C_{\bfX}^\mathfrak{n})\veps\dt + r\veps^2
.
    \end{equation}
    \end{lemma}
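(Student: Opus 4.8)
The plan is to clear the denominator in the definition of $\Gamma_{\bfX}^\mathfrak{n}$ and rearrange, tracking that every step is reversible so that the end result is a genuine equivalence. Recall from \Cref{theorem:CauchyErrorsX} that $\Gamma_{\bfX}^\mathfrak{n} = \dfrac{\dt\veps}{(\veps+\dt z)^2}$, where $z = z^\mathfrak{n} = \min\{z_{\min}^\mathfrak{n},\what{z}_{\min}^\mathfrak{n}\}$, and that $z>0$ by \Cref{prop:z_min_defs}. Since $\veps>0$ and $\dt>0$, the quantity $(\veps + \dt z)^2$ is strictly positive, so multiplying the inequality $C_{\bfX}^\mathfrak{n}\Gamma_{\bfX}^\mathfrak{n}\leq r$ through by $(\veps+\dt z)^2$ yields the equivalent statement $C_{\bfX}^\mathfrak{n}\dt\veps \leq r(\veps+\dt z)^2$.

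Next I would expand the right-hand side, $r(\veps+\dt z)^2 = r\veps^2 + 2rz\veps\dt + rz^2\dt^2$, and move all terms to one side, which gives $0 \leq rz^2\dt^2 + (2rz - C_{\bfX}^\mathfrak{n})\veps\dt + r\veps^2$. The right-hand side is precisely $Q_r(\dt)$ as defined in \eqref{eq:Q_rDefinition}, completing the chain of equivalences. (Should one wish to be explicit, one can present this as a short sequence of $\iff$ steps.)

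There is no substantive obstacle here; the computation is elementary. The only point requiring a word of care is the justification that the denominator $(\veps + \dt z)^2$ is strictly positive, which is what makes the division step reversible and hence produces an ``if and only if'' rather than a one-sided implication; this follows immediately from $\veps,\dt>0$ together with $z>0$.
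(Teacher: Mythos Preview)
Your proposal is correct and is precisely the ``simple calculation'' the paper alludes to: substitute $\Gamma_{\bfX}^\mathfrak{n} = \dt\veps/(\veps+\dt z)^2$, clear the strictly positive denominator, expand, and rearrange. The care you take in noting that $(\veps+\dt z)^2>0$ so that each step is reversible is exactly what is needed for the equivalence.
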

    
    \begin{proof}
The proof is a simple calculation.
    \end{proof}

    \begin{lemma}
If $C_{\bfX}^\mathfrak{n}\Gamma_{\bfX}^\mathfrak{n}\leq r<1,$
then convergence of the method is guaranteed.
    \end{lemma}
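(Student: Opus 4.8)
The plan is a textbook contraction argument resting on \Cref{theorem:CauchyErrorsX}. Setting $\kappa \coloneqq C_{\bfX}^\mathfrak{n}\Gamma_{\bfX}^\mathfrak{n} \le r < 1$, that theorem gives $\vertiii{\Delp{\bfX}} \le \kappa\,\vertiii{\Delm{\bfX}}$, so iterating yields $\vertiii{\bfX^{\ell+1}-\bfX^\ell} \le \kappa^\ell\,\vertiii{\bfX^1-\bfX^0} \le r^\ell\,\vertiii{\bfX^1-\bfX^0}$. For $m>\ell$ the triangle inequality and a geometric-series bound then give $\vertiii{\bfX^m-\bfX^\ell} \le \tfrac{r^\ell}{1-r}\,\vertiii{\bfX^1-\bfX^0} \to 0$ as $\ell\to\infty$, so $\{\bfX^\ell\}$ is Cauchy in the finite-dimensional (hence complete) space $\bbR^{(N-1)\times d}\times\bbR^{N-1}\times\bbR$ equipped with $\vertiii{\cdot}$. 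I would conclude that $\bfX^\ell \to \bfX^\star = (\bfA^\star,\bfb^\star,b_0^\star)$ for some limit $\bfX^\star$.

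Next I would pull this convergence back to the physical variables. Since $\bfW^\ell = V\bfA^\ell + \bfW_\mathcal{N}$ with the null-space component independent of $\ell$ (\Cref{lemma:nullSpaceIter}), the velocity iterates $\bfU^\ell = P^{-\sfrac{1}{2}}\bfW^\ell$ converge to some $\bfU^\star$; likewise $\bfeta^\ell = \what{V}\bfb^\ell + b_0^\ell\,\what{\bfv}_0$ and $\bfT^\ell = Q^{-\sfrac{1}{2}}\bfeta^\ell$ converge to some $\bfT^\star$. The uniform lower bound $T_i^\ell \ge T_{\min}^\mathfrak{n} > 0$ from \eqref{eq:tempBound} in \Cref{prop:GST_properties} passes to the limit, so $(\bfU^\star,\bfT^\star)$ sits in the region where $\Psi_{i,j}(T_i,T_j) = \sqrt{T_i/m_i + T_j/m_j}$ is continuous; consequently all of $A,B,D,F,S$ and the source vectors $\bfs^{(1)},\bfs^{(2)}$ depend continuously on $(\bfU,\bfT)$ near the limit.

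I would then pass to the limit in the iteration itself. Equation \eqref{eq:GSTstep} defines a map $(\bfU^\ell,\bfT^\ell)\mapsto(\bfU^{\ell+1},\bfT^{\ell+1})$: the velocity update \eqref{eq:GSTstep_vel} amounts to the linear solve $(I+\tfrac{\dt}{\veps}Z^\ell)\bfW^{\ell+1} = \bfW^\mathfrak{n}$, whose coefficient matrix is symmetric positive definite by \Cref{prop:z_min_defs} and therefore invertible with inverse depending continuously on $Z^\ell$, which in turn depends continuously on $\bfT^\ell$; and given $\bfW^{\ell+1}$, the temperature update \eqref{eq:GSTstep_temp} is another invertible linear solve depending continuously on $(\bfT^\ell,\bfW^{\ell+1})$. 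Letting $\ell\to\infty$ and using the continuity just established, $(\bfU^\star,\bfT^\star)$ satisfies \eqref{eq:GSTstep} with all superscripts $\ell$ and $\ell+1$ replaced by $\star$, which is exactly the backward Euler system \eqref{eq:BEstep}. Inserting $(n_i,\bfu_i^\star,T_i^\star)$ into the Maxwellian targets then reduces \eqref{eq:IMEX_BE} to a linear solve for $f_i^{\mathfrak{n}+1}$, so the full update is well defined; this establishes convergence of the method.

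The step I expect to be the most delicate is the continuity argument at the limit: $\Psi_{i,j}$ is only $\tfrac{1}{2}$-H\"{o}lder (not Lipschitz) at zero temperature, so continuity of the iteration map would fail near vanishing temperatures, and the whole argument hinges on the a priori bound $T_i^\ell \ge T_{\min}^\mathfrak{n}$ from \Cref{prop:GST_properties} keeping the entire orbit --- and its limit --- strictly inside the smooth region. (If one additionally wishes to call $\bfX^\star$ \emph{the} solution of \eqref{eq:BEstep}, uniqueness in the admissible region can be obtained by re-applying the difference estimates underlying \Cref{theorem:CauchyErrorsX} to any two such solutions.)
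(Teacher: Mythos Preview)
Your proof is correct and follows the same contraction-mapping approach as the paper, which simply invokes \Cref{theorem:CauchyErrorsX} to obtain $\vertiii{\Delp{\bfX}} \le r^\ell\,\vertiii{\bfX^1-\bfX^0}\to 0$. You go considerably further than the paper by explicitly verifying the Cauchy property via a geometric-series bound, identifying the limit in the physical variables, and checking that the limit actually solves the backward Euler system \eqref{eq:BEstep} --- details the paper omits entirely.
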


    \begin{proof}
By \Cref{theorem:CauchyErrorsX}, if $C_{\bfX}^\mathfrak{n}\Gamma_{\bfX}^\mathfrak{n}<r<1,$ then
    \begin{equation}
\lim_{\ell\to\infty}\vertiii{\Delp{\bfX}}
  \leq
    \vertiii{\bfX^1-\bfX^0}\lim_{\ell\to\infty}r^\ell
  = 0
.
    \end{equation}
    \end{proof}

    \subsection{Time Step Selection}
    \label{subsection:GST_time_step}
    
Denote the zeros of $Q_r$, defined in \eqref{eq:Q_rDefinition}, by
    \begin{equation}
    \label{eq:dt_pm}
\dt_\pm
  =
    \frac
        {C_{\bfX}^\mathfrak{n}-2rz}
        {2rz^2}
    \veps
    \pm
    \frac
        {C_{\bfX}^\mathfrak{n}\sqrt{1-\frac{4rz}{C_{\bfX}^\mathfrak{n}}}}
        {2rz^2}
    \veps
.
    \end{equation}
Since the $y$-intercept of $Q_r$ is positive, there are three possible cases:
(1) $\dt_{\pm}$ are complex conjugates, which implies that, $Q_r \geq 0$ for any $\dt>0$;
(2) $\dt_{\pm}$ are both real-valued and negative, so that $Q_r > 0$ for any $\dt > 0$;
(3) $\dt_{\pm}$ are both real-valued and positive, in which case, selecting $\dt$ in the set $S_0 \coloneqq (0,\dt_-] \cup [\dt_+, \infty)$ ensures that $Q_r(\dt)\geq 0,$ and thus, the method will converge.

    \begin{theorem}
Given $\dt_0>0$, there exists $\dt_1$ such that (i) $C\dt_0\leq\dt_1\leq\dt_0$ for a constant $C \in (0,1]$ that is independent of $\veps$, and (ii) for this $\dt_1$,
    \begin{equation}
C_{\bfW}^\mathfrak{n}\Gamma_{\bfW}^\mathfrak{n}
  + C_0^\mathfrak{n}\Gamma_{\bfW}^\mathfrak{n}
  + C_1^\mathfrak{n}\Gamma_{\bfX}^\mathfrak{n}
  <1
,
    \end{equation}
thus ensuring convergence of the GST method.
    \end{theorem}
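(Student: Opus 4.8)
The plan is to reduce the stated convergence criterion to a single scalar inequality of the form $C_{\bfX}^\mathfrak{n}\,\Gamma_{\bfX}^\mathfrak{n}(\dt_1,\veps)<1$ and then to exhibit such a $\dt_1$. For the reduction, note that $z_{\min}^\mathfrak{n},\what{z}_{\min}^\mathfrak{n}\geq z$ gives $\Gamma_{\bfW}^\mathfrak{n},\Gamma_{\bfeta}^\mathfrak{n}\leq\Gamma_{\bfX}^\mathfrak{n}$, while $\gamma^\mathfrak{n},\what{\gamma}^\mathfrak{n}\leq1$ and $(\Gamma_{\bfW}^\mathfrak{n}\Gamma_{\bfeta}^\mathfrak{n})^{1/2}\leq\Gamma_{\bfX}^\mathfrak{n}$; identifying $C_1^\mathfrak{n}$ with $C_{1,1}^\mathfrak{n}+C_{2,1}^\mathfrak{n}+C_{2,2}^\mathfrak{n}+C_{3,1}^\mathfrak{n}+C_{3,2}^\mathfrak{n}+C_{3,3}^\mathfrak{n}$ (cf.~\Cref{theorem:TemperatureError,theorem:CauchyErrorsX}), the left-hand side of the claimed inequality is at most $(C_{\bfW}^\mathfrak{n}+C_0^\mathfrak{n}+C_1^\mathfrak{n})\Gamma_{\bfX}^\mathfrak{n}=C_{\bfX}^\mathfrak{n}\Gamma_{\bfX}^\mathfrak{n}$. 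Hence it suffices to find $\dt_1\in[C\dt_0,\dt_0]$ with $C_{\bfX}^\mathfrak{n}\Gamma_{\bfX}^\mathfrak{n}(\dt_1,\veps)\leq r$ for a fixed $r\in(0,1)$, which we take to be $r=\tfrac12$.

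By the lemma characterizing $Q_r$, the inequality $C_{\bfX}^\mathfrak{n}\Gamma_{\bfX}^\mathfrak{n}\leq r$ is equivalent to $Q_r(\dt)\geq0$, and a short computation shows that the discriminant of $Q_r$ equals $\veps^2C_{\bfX}^\mathfrak{n}(C_{\bfX}^\mathfrak{n}-4rz)$. If $C_{\bfX}^\mathfrak{n}\leq4rz$, then $Q_r(\dt)\geq0$ for every $\dt>0$ (cases (1)--(2) above), and we take $\dt_1=\dt_0$, $C=1$. Otherwise $C_{\bfX}^\mathfrak{n}>4rz>2rz$, so both roots of $Q_r$ are real and positive (case (3) above), and $Q_r\geq0$ holds precisely on $S_0=(0,\dt_-]\cup[\dt_+,\infty)$. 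The decisive structural observation, already visible in \eqref{eq:dt_pm}, is that $\dt_\pm=c_\pm\veps$ with
\[
c_\pm=\frac{C_{\bfX}^\mathfrak{n}-2rz\pm C_{\bfX}^\mathfrak{n}\sqrt{1-4rz/C_{\bfX}^\mathfrak{n}}}{2rz^2}
\]
positive and \emph{independent of $\veps$}, and with $0<c_-<c_+$.

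With this in hand, put $C=c_-/c_+\in(0,1)$, again independent of $\veps$, and let $\dt_0>0$ be given. If $\dt_0\notin(\dt_-,\dt_+)$, take $\dt_1=\dt_0\in S_0$; then $C\dt_0\leq\dt_1\leq\dt_0$ and $Q_r(\dt_1)\geq0$. If instead $\dt_0\in(\dt_-,\dt_+)=(c_-\veps,c_+\veps)$, take $\dt_1=\dt_-=c_-\veps$: then $\dt_1<\dt_0$, while $\dt_1=c_-\veps=C(c_+\veps)>C\dt_0$, and $Q_r(\dt_1)=0$. In either case $C_{\bfX}^\mathfrak{n}\Gamma_{\bfX}^\mathfrak{n}(\dt_1,\veps)\leq r<1$, so the stated criterion holds at $\dt_1$, ensuring convergence of the GST method via \Cref{theorem:CauchyErrorsX}. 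The one place I expect real care to be needed---and the crux of the argument---is the observation that $\dt_\pm$ scale linearly in $\veps$, so that the forbidden interval $(\dt_-,\dt_+)$ is self-similar under rescaling and retreating to its left endpoint loses only the $\veps$-independent factor $c_-/c_+$; everything else is elementary quadratic analysis and bookkeeping.
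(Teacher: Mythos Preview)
Your proof is correct and follows essentially the same approach as the paper's own proof: both reduce to the quadratic $Q_r$, observe that its roots $\dt_\pm$ scale linearly in $\veps$, and in the only nontrivial case set $\dt_1=\dt_-$ with $C=\dt_-/\dt_+=c_-/c_+$ independent of $\veps$. Your write-up is in fact slightly more explicit than the paper's: you spell out the reduction from the criterion in the statement to $C_{\bfX}^\mathfrak{n}\Gamma_{\bfX}^\mathfrak{n}\leq r$, fix a concrete $r=\tfrac12$, and compute the discriminant to cleanly separate the trivial from the nontrivial case, whereas the paper relies on the preceding subsection for that setup.
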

    
    \begin{proof}
The only nontrivial case to consider is when $\dt_{\pm}$ are both real-valued and positive.
If $\dt_0 \leq \dt_-$ or $\dt_+ \leq \dt_0$, then the result follows trivially with $\dt_0 = \dt_1$, i.e., $C=1$.
If $\dt_- < \dt_0 < \dt_+$, then setting $\dt_1 = \dt_-$ gives, via \eqref{eq:dt_pm},
    \begin{equation}
\frac
    {\dt_1}
    {\dt_0}
  >
    \frac
        {\dt_-}
        {\dt_+}
  =
    \frac
        {C_{\bfX}^\mathfrak{n}-2rz - C_{\bfX}^\mathfrak{n}\sqrt{1-\frac{4rz}{C_{\bfX}^\mathfrak{n}}}}
        {C_{\bfX}^\mathfrak{n}-2rz + C_{\bfX}^\mathfrak{n}\sqrt{1-\frac{4rz}{C_{\bfX}^\mathfrak{n}}}}
  \eqqcolon
    C \in (0,1]
,
    \end{equation}    
and $C$ is independent of $\veps$.
    \end{proof}

    \section{Numerical demonstration}
    \label{section:numerics}
    
To demonstrate the application of the GST method, we consider a slab geometry, in which the distribution functions $f_i$ are invariant in the $x_2$ and $x_3$ direction.
In this setting, \eqref{eq:BGKequation} takes the form
    \begin{equation}
    \label{eq:BGKequation_slab} 
\frac{\partial f_i}{\partial t}
+
v_1 \partial_{x}f_i
  =
    \frac{1}{\veps}\sum_j\lambda_{i,j}(M_{i,j}-f_i),
\quad \forall i\in\{1,\cdots,N\}
,
    \end{equation}
where $[\bfu_{i,j}]_2 = [\bfu_{i,j}]_3=0$, and to simplify the notation, we let $x_1 = x$, $[\bfu_i]_1 = u_i$, and $[\bfu_{i,j}]_1=u_{i,j}$.
Rather than solve \eqref{eq:BGKequation_slab} in the full three-dimensional velocity space, we apply a multi-species extension of Chu reduction \cite{ChuReduction}.
For each $i \in \{1,\cdots ,N\}$, let
    \begin{subequations}
    \begin{align}
g_i(x,v_1,t)
  &=
    \int_{\bbR^2}f_i(x,\bfv,t)\,\diff v_2\,\diff v_3
,
    \\
h_i(x,v_1,t) 
  &=
    \int_{\bbR^2}(v_2^2+v_3^2)f_i(x,\bfv,t)\,\diff v_2\,\diff v_3
.
    \end{align}
    \end{subequations}
Then \eqref{eq:BGKequation_slab} can be reduced to a system of two equations for $g_i$ and $h_i$:
    \begin{subequations}
    \label{eq:gh_system}
    \begin{align}
\frac{\partial g_i}{\partial t}
+
v_1\frac{\partial g_i}{\partial x}
  &=
    \frac{1}{\veps}\sum_j\lambda_{i,j}(G_{i,j}-g_i)
,
    \\
\frac{\partial h_i}{\partial t}
+
v_1\frac{\partial h_i}{\partial x}
  &=
    \frac{1}{\veps}\sum_j\lambda_{i,j}(H_{i,j}-h_i)
,
    \end{align}
    \end{subequations}
where $G_{ij}(v_1) = M_{n_i,u_{i,j},T_{i,j}/m_i}^{(1)}(v_1)$ is a Maxwellian (refer to \Cref{defn:Maxwellian}) with $d=1$, $H_{i,j} = 2\frac{T_{i,j}}{m_i}G_{i,j}$, and
    \begin{equation}
n_i 
  =
    \int_\bbR g_i\,\diff v_1
,\qquad
n_i u_i
  =
    \int_\bbR v_1g_i\,\diff v_1
,\qquad
\frac{3n_i}{m_i}T_i
  =
    \int_\bbR (v_1-u_i)^2g_i\,\diff v_1
    +
    \int_\bbR h_i\,\diff v_1
.
    \end{equation}

We use the IMEX approach outlined in \Cref{section:imex} to solve \eqref{eq:gh_system}.
The Butcher tableaux for the IMEX method is taken from Section 2.6 of \cite{ascher1997implicit}.
The implicit step takes the form of a backward Euler method and the associated moment updates for $n_i$, $u_i$, and $T_i$ take exactly the form of \eqref{eq:BEstep}.
Thus, the GST method still forms the key component of the implicit step, and all of the analysis above carries through without any modification.
The spatial discretization of the advection operator uses a second-order finite volume scheme with minmod-type slope limiters.
The velocity is discretized using standard discrete velocity models based on evenly spaced points and a mid-point quadrature rule.
Details of these discretizations can be found in \cite{habbershaw2022progress}.
The strategy for selecting the time step is to use the natural hyperbolic restriction of the advection operators in the equations.
In practice, we observe that the GST method converges without the restrictions set in \Cref{subsection:GST_time_step}.
However, for general problems, these restrictions provide a rigorous fall-back whenever the GST method fails to converge at the hyperbolic time step.

    \subsection{Sod problem with two identical species}
    \label{section:sodTest}

To verify the multi-species code, we perform a simulation of the Sod shock tube problem \cite{sod1978survey}, a Riemann problem for the compressible Euler equations of a perfect gas.
The initial data are $(n_{\textnormal{L}},\bfu_{\textnormal{L}},T_{\textnormal{L}})=(1,\mathbf{0},1)$ for $x \leq 0$, and $(n_{\textnormal{R}},\bfu_{\textnormal{R}},T_{\textnormal{R}})=(0.125,\mathbf{0},0.8)$ for $x \geq 0$.
An analytical solution for the Riemann problem can be found in \cite{toro2013riemann}.
For small values of $\veps$, both the single species BGK equation \cite{coron} and multi-species BGK (M-BGK) equations \cite{haack2017model} with properly initialized data formally recover the Sod solution.
Because the slab problem \eqref{eq:BGKequation_slab} is formulated in three velocity dimensions, the adiabatic index for the limiting Euler equations is $\gamma = 5/3$.

In the current test, we treat the left- and right-hand side of the domain as different particle species, each having mass $m_i=1$.
The kinetic initial data is
    \begin{equation}
    \label{eq:Sod}
f_1(x,\bfv,0)
  =
    \begin{cases}
        M_{n_{\textnormal{L}},\bfu_{\textnormal{L}},T_{\textnormal{L}}}^{(3)} & x \leq 0
        \\
        M_{0.001,\bfu_{\textnormal{R}},T_{\textnormal{R}}}^{(3)} & x > 0
    \end{cases}
\qquand
f_2(x,\bfv,0)
  =
    \begin{cases}
        M_{0.001,\bfu_{\textnormal{L}},T_{\textnormal{L}}}^{(3)} & x \leq 0
        \\
        M_{n_{\textnormal{R}},\bfu_{\textnormal{R}},T_{\textnormal{R}}}^{(3)} & x > 0
    \end{cases}
,
    \end{equation}
where $M_{n,\bfu,\theta}^{(d)}$ denotes the $d$-dimensional Maxwellian, defined in \eqref{eq:Maxwellian_def}.
Trace densities are used in \eqref{eq:Sod} instead of voids in order to avoid negativity in the numerical solutions.

The numerical solution is computed on a discretization of the phase space $X\times V = [-1,1]\times[-10,10]$, using periodic boundary conditions and a mesh with $N_x=256$ by $N_v=192$ cells of uniform size.
The time step selection is driven by the advection in  \eqref{eq:gh_system}: $\dt=0.9\frac{2/Nx}{2\max\{|v_l|\}}$.
\Cref{figure:sodTestSmallEps} shows single species and multi-species simulations of the model with $\veps=10^{-4}$, run to a final time of $t_{\textnormal{F}}=0.2$, compared with the Sod solution for the associated Euler equations.
The results show that both BGK and M-BGK models perform well as numerical solvers for the Sod problem in this highly collisional regime.

\begin{figure}[ht]
    \centering
    \begin{subfigure}{0.32\textwidth}
        \includegraphics[width=\textwidth]{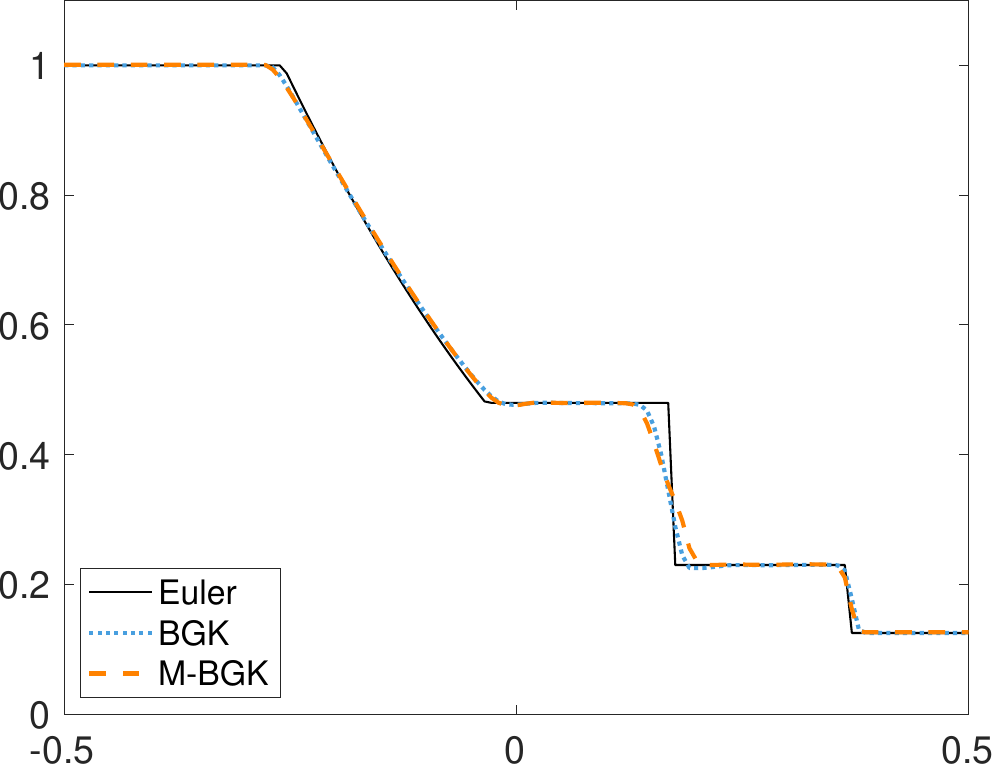}
        \subcaption{Total number density.}
    \end{subfigure}
    \begin{subfigure}{0.32\textwidth}
        \includegraphics[width=\textwidth]{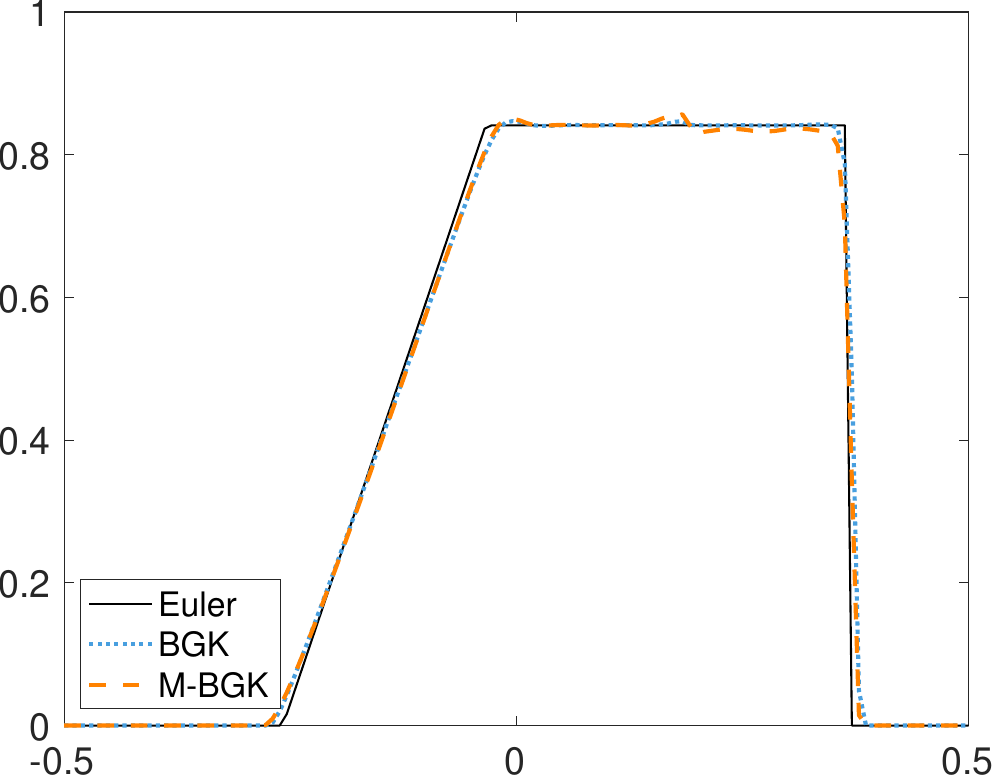}
        \subcaption{Total velocity.}
    \end{subfigure}
    \begin{subfigure}{0.32\textwidth}
        \includegraphics[width=\textwidth]{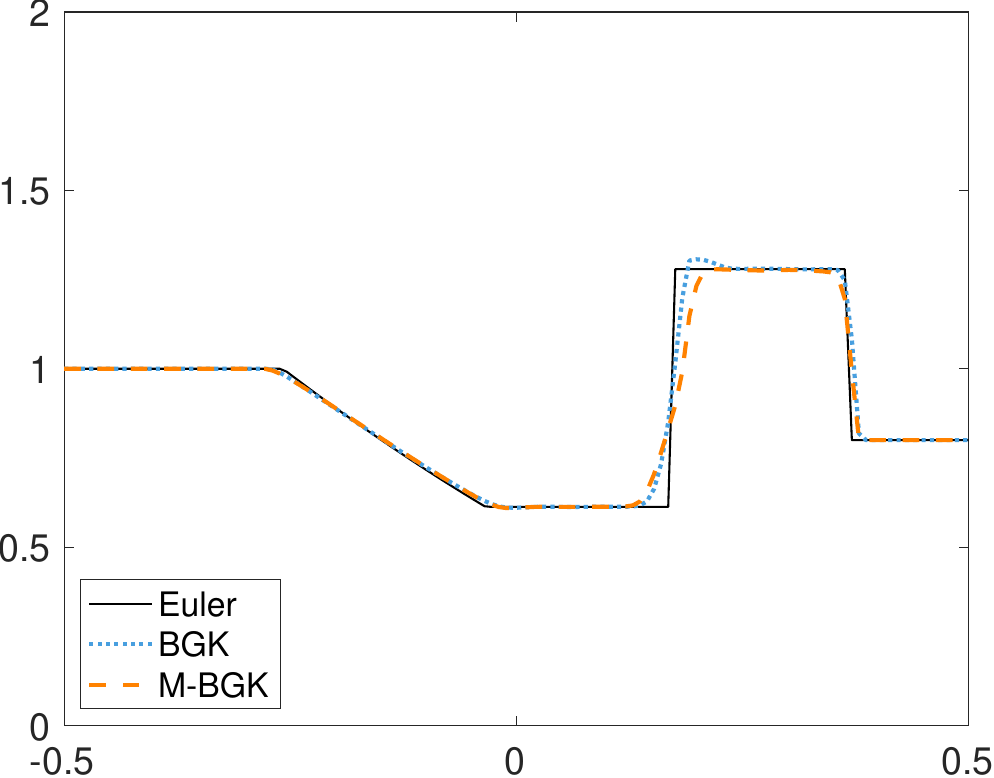}
        \subcaption{Total temperature.}
    \end{subfigure}
    \caption{The total density, total bulk velocity, and total temperature moments are compared to the solution of the Euler equations at time $t=0.2$, with $\veps=10^{-4}$.}
    \label{figure:sodTestSmallEps}
\end{figure}

    \subsection{Three Species Mixture: Ar-Kr-Xe}
The goal of this test is to demonstrate the benefits of the IMEX method when the problem becomes stiff (i.e., as $\varepsilon\to0$).
The following CFL-type conditions on the time step are required for the explicit and IMEX methods (which use a minmod-type slope limiter in the flux computation) to ensure stability and positivity of the numerical solution:
    \begin{equation}
\dt^\mathfrak{n}
    \leq 
  \begin{cases}
\frac{\veps\Delta x}{2\veps\max\{|v_l|\} + 
    \Delta x\Lambda^\mathfrak{n}}
&\!\!\!\!\eqqcolon
  \dt_{\textnormal{EX}}^\mathfrak{n}
    \\
\frac{\Delta x}{2\max\{|v_l|\}}
&\!\!\!\!\eqqcolon
  \dt_{\textnormal{IMEX}}
  \end{cases}
,\qqwhere
\Lambda^\mathfrak{n}
  =
    \max_{i=1}^N
    \left\{
      \max_{\kappa}
      \left\{
        \sum_{j=1}^{N}\lambda_{i,j}^\mathfrak{n}(x_\kappa)
      \right\}
    \right\}
.
    \end{equation}
As $\veps \to 0$, the condition on $\dt_{\textnormal{EX}}^\mathfrak{n}$ becomes restrictive.

To compare the methods, we simulate an interface problem, with a mixture of Argon, Krypton, and Xenon; the masses and diameters of these particle species are taken from \cite{book:MathTheory_NonUniformGases_Chapman_Cowling} and are given below in SI units: 
    \begin{equation}
    \begin{pmatrix}
      m_{\textnormal{Ar}} \\ m_{\textnormal{Kr}} \\ m_{\textnormal{Xe}}
    \end{pmatrix}
  =
    \begin{pmatrix}
      6.6335209 \\ 13.914984 \\ 21.801714
    \end{pmatrix}
\times 10^{-26} \textnormal{ kg}
,\qquad
    \begin{pmatrix}
      d_{\textnormal{Ar}} \\ d_{\textnormal{Kr}} \\ d_{\textnormal{Xe}}
    \end{pmatrix}
  =
    \begin{pmatrix}
      3.659 \\ 4.199 \\ 4.939
    \end{pmatrix}
\times 10^{-10} \textnormal{ m}
.
    \end{equation}
The setup for the test is similar to that of \Cref{section:sodTest}: a mixture of gases at rest is initialized with a high temperature on the left, and a low temperature on the right of the domain.
Specifically, the mixture of Ar, Kr, and Xe is initialized on the left half of the 1-dimensional spatial domain with number densities $(n_{\textnormal{Ar}}^0,n_{\textnormal{Kr}}^0,n_{\textnormal{Xe}}^0) = (5,5,0.5)$ $\times 10^{25}$ m$^{-3}$, and common temperature $T_{\textnormal{L}}=10$; on the right half, the mixture is initialized with number densities $(n_{\textnormal{Ar}}^0,n_{\textnormal{Kr}}^0,n_{\textnormal{Xe}}^0) = (0.5,0.5,5)$ $\times 10^{25}$ m$^{-3}$, and the common temperature $T_{\textnormal{R}}=1$.
The initial kinetic distributions are given by
    \begin{subequations}
    \begin{align}
f_{\textnormal{Ar}}(x,\bfv,0)
  &=
    \chi_{[-1,0]}(x)M_{5,\mathbf{0},10/ m_{\textnormal{Ar}}}^{(3)}(\bfv)
    +
    \chi_{(0,1]}(x)M_{0.5,\mathbf{0},1/m_{\textnormal{Ar}}}^{(3)}(\bfv)
,
    \\
f_{\textnormal{Kr}}(x,\bfv,0)
  &=
    \chi_{[-1,0]}(x)M_{5,\mathbf{0},10/m_{\textnormal{Kr}}}^{(3)}(\bfv)
    +
    \chi_{(0,1]}(x)M_{0.5,\mathbf{0},1/m_{\textnormal{Kr}}}^{(3)}(\bfv)
,
    \\
f_{\textnormal{Xe}}(x,\bfv,0)
  &=
    \chi_{[-1,0]}(x)M_{0.5,\mathbf{0},10/ m_{\textnormal{Xe}}}^{(3)}(\bfv)
    +
    \chi_{(0,1]}(x)M_{5,\mathbf{0},1/m_{\textnormal{Xe}}}^{(3)}(\bfv)
,
    \end{align}
    \end{subequations}
where $\chi_{\mathcal{S}}$ is the indicator function on the set $\mathcal{S}$, and $M_{n,\bfu,\theta}^{(d)}$ denotes the $d$-dimensional Maxwellian, defined in \eqref{eq:Maxwellian_def}.

The numerical solution is computed on a discretization of the phase space $X\times V = [-1,1]\times[-30,30]$, using reflective boundaries; the mesh consists of $N_x=256 \times N_v=192$ cells of uniform size.
The IMEX method (using the GST solver) uses a time step of size $\dt = 0.9\dt_{\textnormal{IMEX}}$, and the fully explicit method uses a time step of size $\dt^\mathfrak{n}=0.9\Delta t_{\textnormal{EX}}^\mathfrak{n}$; the simulation is run to a final time of $t_{\textnormal{F}}=0.1$ $\mu$s.

Moment plots for numerical solutions computed with the IMEX and fully explicit method are compared in \Cref{figure:AKX_smallEps} for $\veps\approx0.001$.
The computed solutions align closely.
However, the explicit method requires $7089$ time steps, while the IMEX method only requires $854$ steps.
In practice, as $\veps$ decreases, the number of time steps for the explicit method grows, slowing the simulation, while the IMEX method experiences no slow down in computation time.

\begin{figure}[ht]
    \centering
    \begin{subfigure}{0.32\textwidth}
        \includegraphics[width=\textwidth, height=0.6 \textwidth]{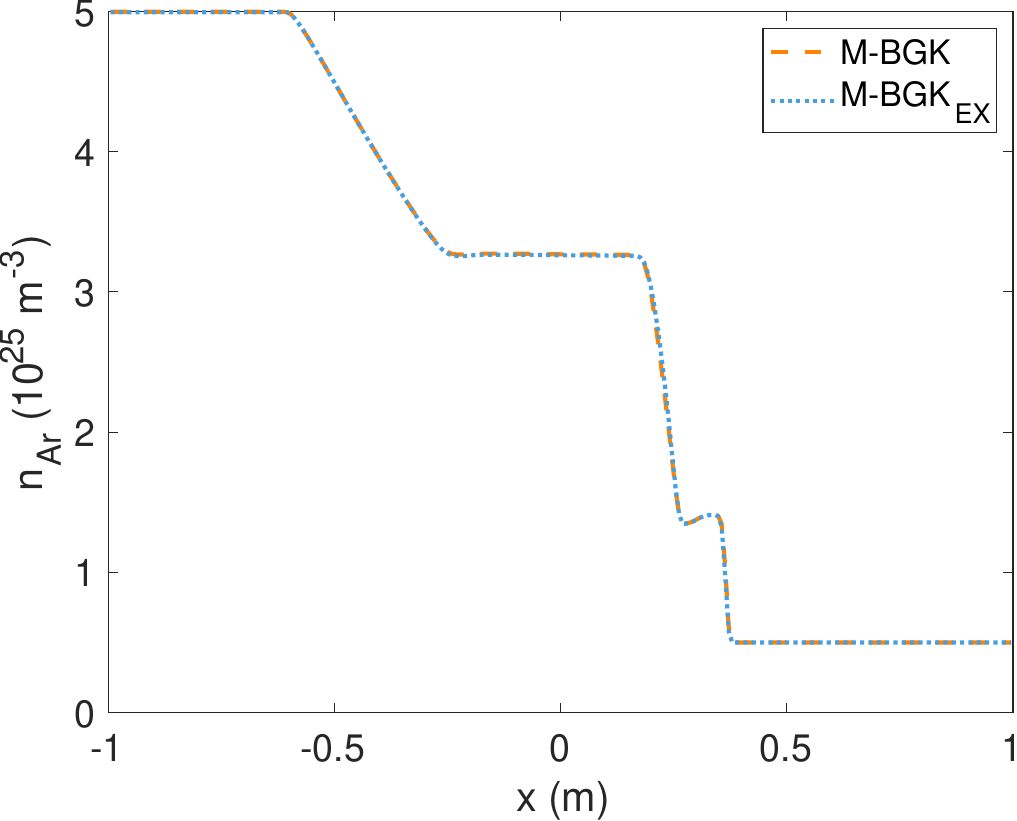}
        \subcaption{Density of Ar}
    \end{subfigure}
    \begin{subfigure}{0.32\textwidth}
        \includegraphics[width=\textwidth, height=0.6 \textwidth]{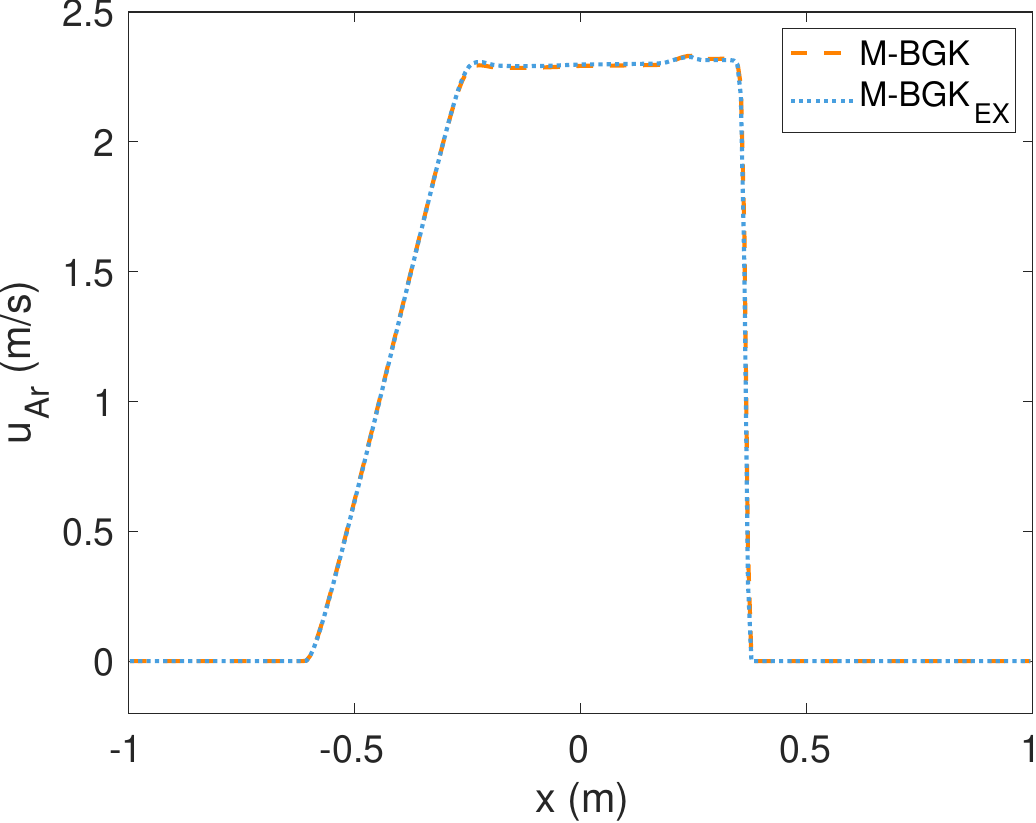}
    \subcaption{Bulk velocity of Ar}
    \end{subfigure}
    \begin{subfigure}{0.32\textwidth}
        \includegraphics[width=\textwidth, height=0.6 \textwidth]{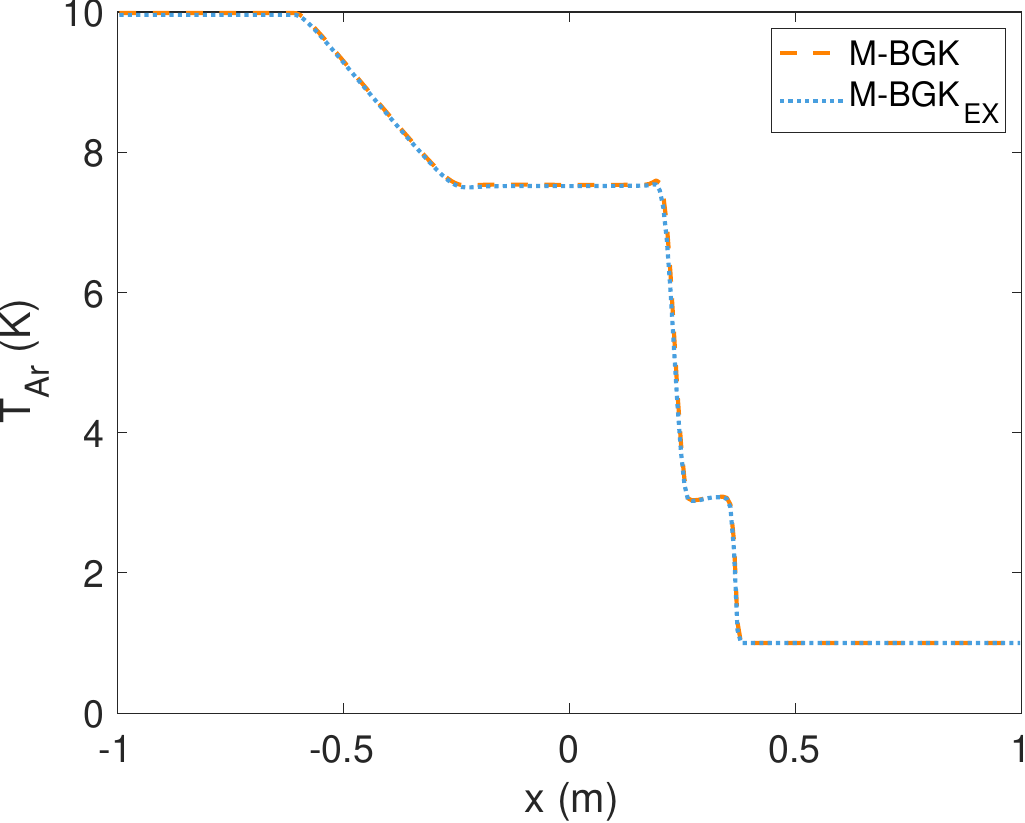}
        \subcaption{Temperature of Ar}
    \end{subfigure}
\\
    \begin{subfigure}{0.32\textwidth}
        \includegraphics[width=\textwidth, height=0.6 \textwidth]{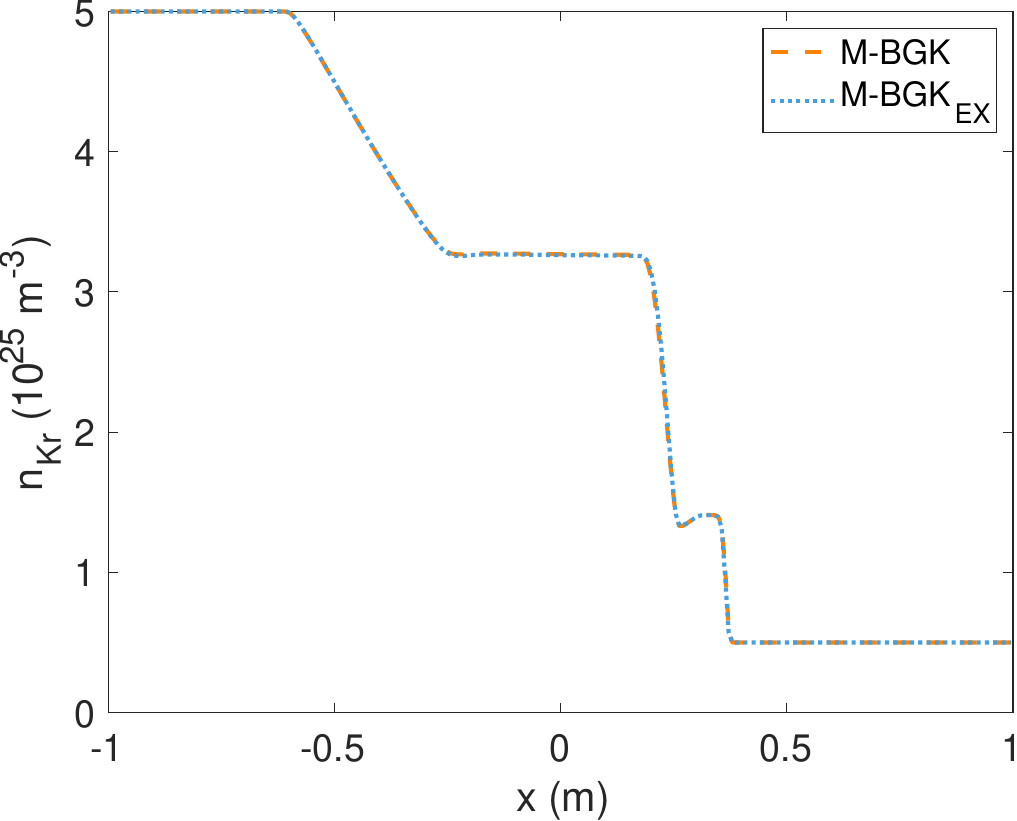}
        \subcaption{Density of Kr}
    \end{subfigure}
    \begin{subfigure}{0.32\textwidth}
        \includegraphics[width=\textwidth, height=0.6 \textwidth]{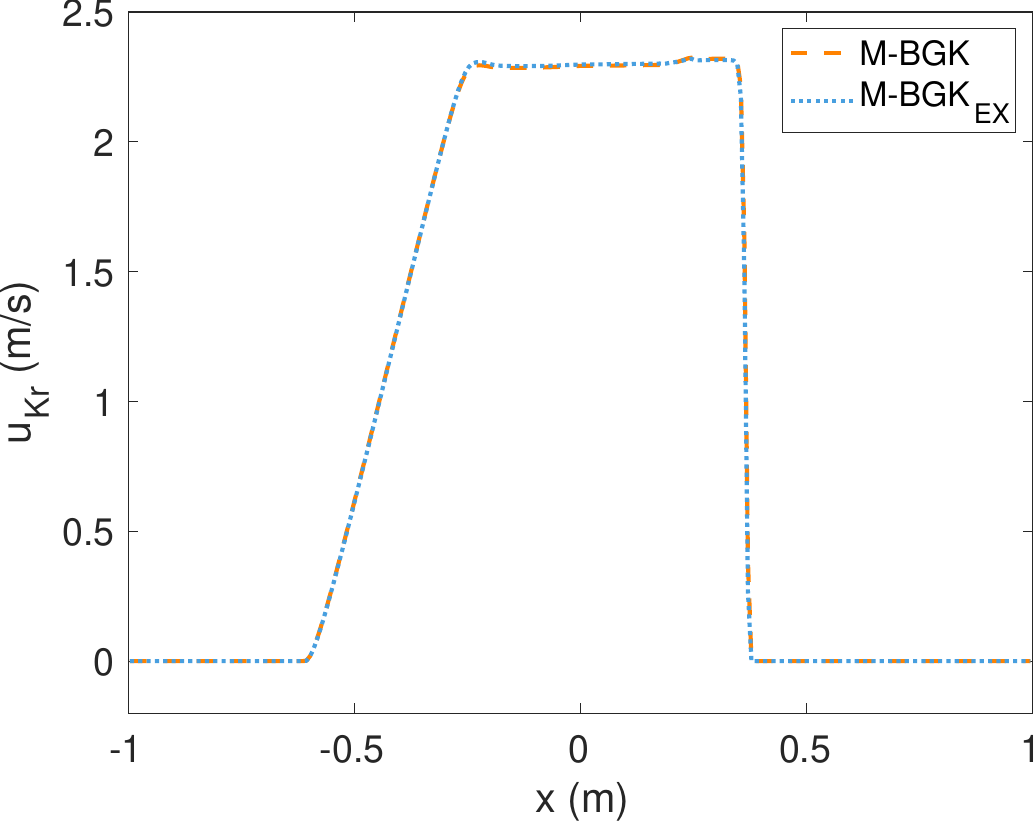}
    \subcaption{Bulk velocity of Kr}
    \end{subfigure}
    \begin{subfigure}{0.32\textwidth}
        \includegraphics[width=\textwidth, height=0.6 \textwidth]{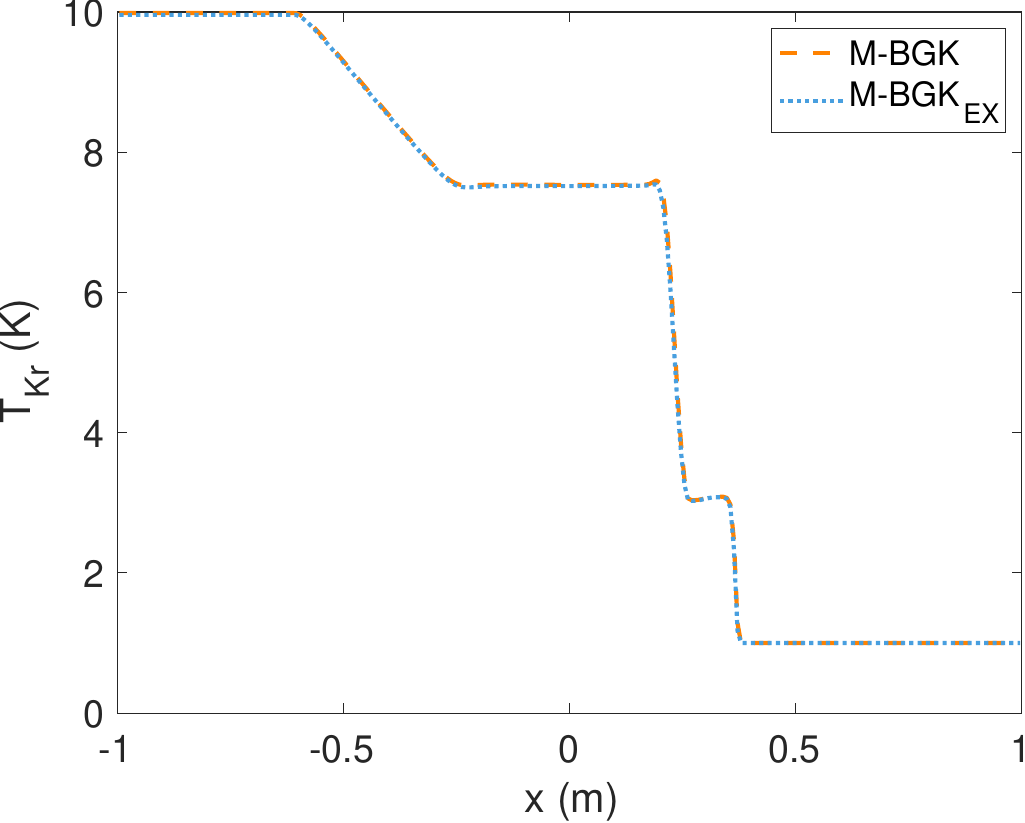}
        \subcaption{Temperature of Kr}
    \end{subfigure}
\\
    \begin{subfigure}{0.32\textwidth}
        \includegraphics[width=\textwidth, height=0.6 \textwidth]{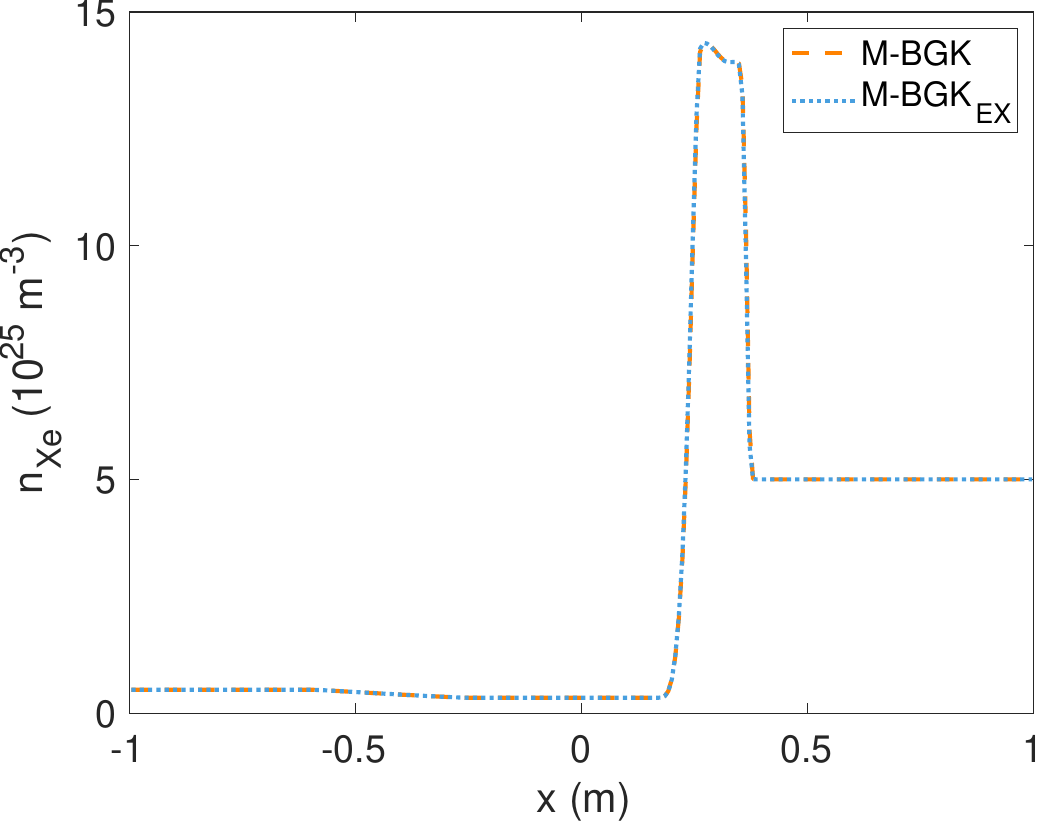}
        \subcaption{Density of Xe}
    \end{subfigure}
    \begin{subfigure}{0.32\textwidth}
        \includegraphics[width=\textwidth, height=0.6 \textwidth]{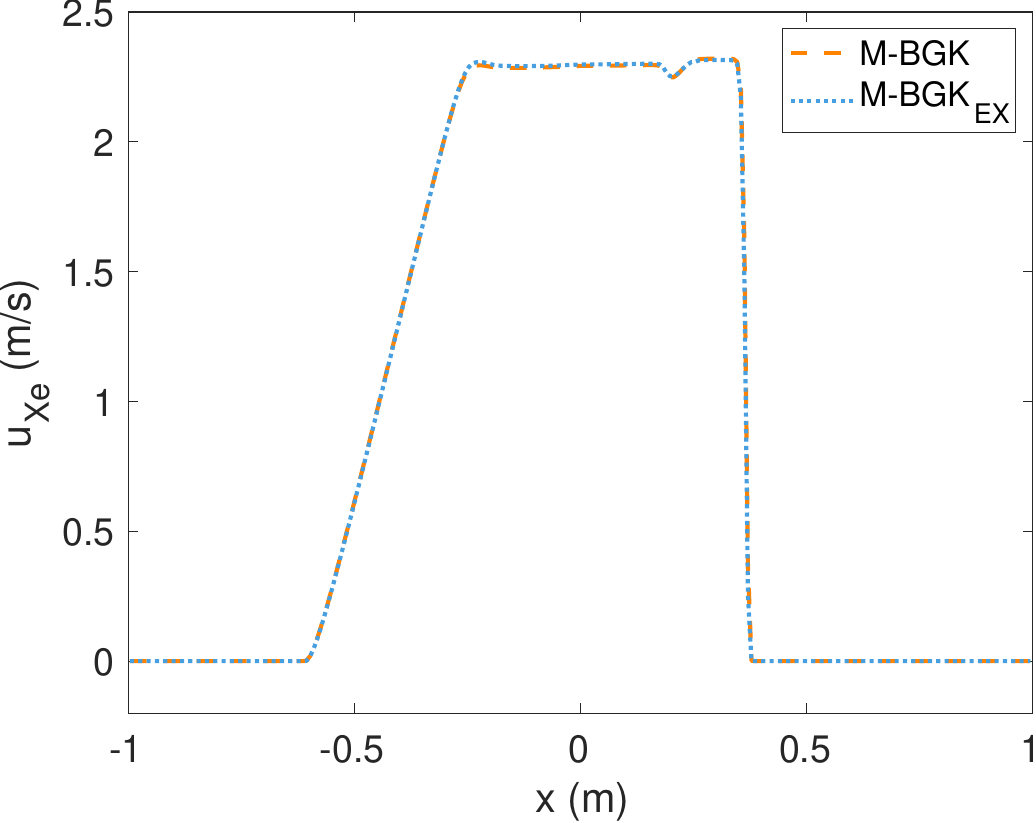}
    \subcaption{Bulk velocity of Xe}
    \end{subfigure}
    \begin{subfigure}{0.32\textwidth}
        \includegraphics[width=\textwidth, height=0.6 \textwidth]{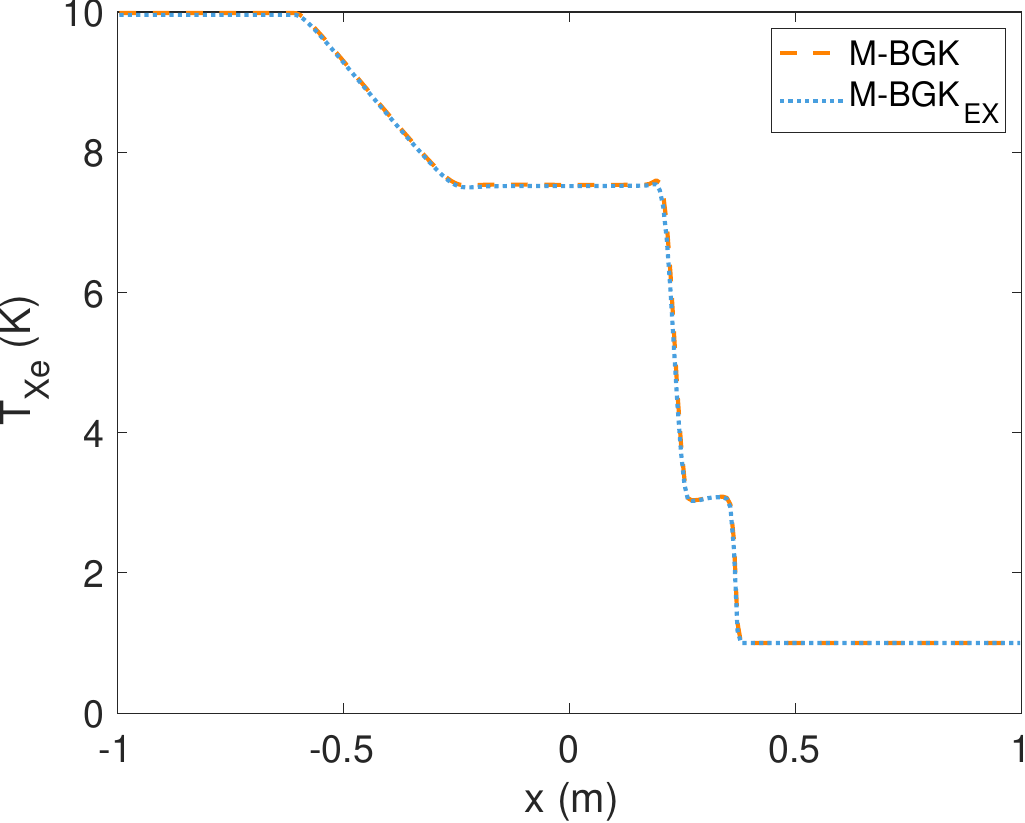}
        \subcaption{Temperature of Xe}
    \end{subfigure}
    \caption{Noble gas mixture of Ar, Kr, and Xe: Moment plots of species number density, bulk velocity, and temperature, are given at $t_{\textnormal{F}}=0.1$ $\mu$s, for a mixture with $\veps \approx 0.00103$. M-BGK uses the GST algorithm for the moment solve. M-BGK$_{\textnormal{EX}}$ uses a fully explicit method.}
    \label{figure:AKX_smallEps}
\end{figure}

    \section{Conclusions}
    \label{section:conclusions}
    
In this paper, we have presented a simple iterative approach for solving a set of implicit moment equations as part of a larger IMEX-based strategy for computing solutions of a multi-species BGK model.
The main challenge is that the collision frequencies in the moment equations themselves depend on the moments in a complicated way.
The key contribution is a rigorous contraction mapping proof, which shows that the iterative method converges to a unique solution under reasonable time steps that do not degenerate to zero when the collision parameter $\veps \to 0$.
This implicit solve is a key component in the IMEX discretization of the multi-species BGK model, the implementation of which is done in slab geometry but closed in three-dimensional velocity space using a Chu reduction technique \cite{ChuReduction}.

In future work, we will examine similar problems for multi-species ellipsoidal statistical models (ES-BGK) \cite{holway1965kinetic,andries2000gaussian} that, unlike standard BGK models, recover in highly collisional regimes, the correct Prandtl number for the  compressible Navier-Stokes equations.  
We will also extend the current code structure used here to generate results for more general problems in multi-dimensional domains.  
We will also explore a Fokker-Planck analog of the BGK model, often referred to as the Lenard-Bernstein model \cite{lenard1958plasma}, that is useful in plasma physics simulations \cite{endeve2022conservative,francisquez2022improved,ulbl2022implementation}.

    \appendix

    \section{Inverse Matrix Norms}
    \label{appendix:lemma:InverseMatrixNorm}
The following lemma is used in the proofs of \Cref{lemma:inverse_norms} and \Cref{lemma:MHat}.

    \begin{lemma}
The matrices $M^\ell$ and $\what{M}^\ell$, defined in \Cref{lemma:W_RangeSpaceIterates} and \Cref{lemma:bIterates_bDiffs}, respectively, satisfy the bounds
    \begin{equation}
\|M^\ell\|_2
  \leq
    \frac{\veps}{\veps+\dt z_{1}^\ell}
  \leq
    \frac{\veps}{\veps+\dt z_{\min}^\mathfrak{n}}
\qquand
\|\what{M}^\ell\|_2
  \leq
    \frac{\veps}{\veps+\dt\what{z}_{1}^\ell}
  \leq
    \frac{\veps}{\veps+\dt\what{z}_{\min}^\mathfrak{n}}
,
    \end{equation}
where $z_1^\ell \geq z_{\min}^\mathfrak{n}$ and $\what{z}_1^\ell \geq z_{\min}^\mathfrak{n}$ are defined in \Cref{prop:z_min_defs}.
    \end{lemma}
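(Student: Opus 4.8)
The plan is to diagonalize $M^\ell$ explicitly and read off its norm from its spectrum. First I would record that $Z^\ell = P^{-\sfrac12}(D^\ell-A^\ell)P^{-\sfrac12}$ is symmetric and positive semidefinite with a one-dimensional null space $\mathcal{N}$ (\Cref{prop:z_min_defs}), and that its range $\mathcal{R}=\mathcal{N}^\perp$ is invariant under $Z^\ell$: for $\bfv\in\mathcal{R}$ and $\bfw\in\mathcal{N}$ one has $(Z^\ell\bfv,\bfw)_2=(\bfv,Z^\ell\bfw)_2=0$, so $Z^\ell\bfv\in\mathcal{N}^\perp=\mathcal{R}$. Since the columns of $V$ form an orthonormal basis of $\mathcal{R}$, the matrix $V^\top Z^\ell V\in\bbR^{(N-1)\times(N-1)}$ is exactly the matrix of the self-adjoint restriction $Z^\ell|_{\mathcal{R}}$ in that basis; hence it is symmetric with eigenvalues equal to the nonzero eigenvalues $z_1^\ell\leq\cdots\leq z_{N-1}^\ell$ of $Z^\ell$, all strictly positive.

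Next I would pass to $M^\ell$. Because $V^\top Z^\ell V$ is symmetric with positive eigenvalues $z_k^\ell$, the matrix $I+\frac{\dt}{\veps}V^\top Z^\ell V$ is symmetric positive definite with eigenvalues $1+\frac{\dt}{\veps}z_k^\ell\geq 1$; in particular it is invertible, so $M^\ell=\bigl(I+\frac{\dt}{\veps}V^\top Z^\ell V\bigr)^{-1}$ is well defined, symmetric, and positive definite with eigenvalues $\bigl(1+\frac{\dt}{\veps}z_k^\ell\bigr)^{-1}=\frac{\veps}{\veps+\dt z_k^\ell}$. Since the induced $2$-norm of a symmetric matrix is its spectral radius, and these eigenvalues are all positive with the largest one corresponding to the smallest $z_k^\ell$, I obtain $\|M^\ell\|_2=\frac{\veps}{\veps+\dt z_1^\ell}$. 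The second inequality then follows because $z\mapsto\frac{\veps}{\veps+\dt z}$ is decreasing on $(0,\infty)$ and $z_1^\ell\geq z_{\min}^\mathfrak{n}$ by \Cref{prop:z_min_defs}.

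Finally, the bound for $\what{M}^\ell$ is proved by the identical argument with $Z^\ell,V,z_k^\ell,z_{\min}^\mathfrak{n}$ replaced by $\what{Z}^\ell,\what{V},\what{z}_k^\ell,\what{z}_{\min}^\mathfrak{n}$, using that $\what{Z}^\ell=Q^{-\sfrac12}(F^\ell-B^\ell)Q^{-\sfrac12}$ is likewise symmetric positive semidefinite with one-dimensional null space and that $\what{z}_1^\ell\geq\what{z}_{\min}^\mathfrak{n}$. I do not expect any real obstacle; the only point needing a moment's care is the claim that $V^\top Z^\ell V$ inherits precisely the positive part of the spectrum of $Z^\ell$, which rests on the invariance of $\mathcal{R}$ under $Z^\ell$ together with $V$ being a linear isometry onto $\mathcal{R}$.
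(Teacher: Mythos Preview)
Your argument is correct. Both your proof and the paper's rest on the same spectral fact, namely that $V^\top Z^\ell V$ is symmetric with eigenvalues exactly $z_1^\ell,\ldots,z_{N-1}^\ell$; your justification of this via invariance of $\mathcal{R}$ under $Z^\ell$ is in fact more explicit than what the paper writes. The difference is only in how the norm is extracted. You diagonalize $M^\ell$ directly, read off its eigenvalues $\veps/(\veps+\dt z_k^\ell)$, and take the spectral radius, obtaining the first inequality as an equality. The paper instead sets $J^\ell=\tfrac{\dt}{\veps}V^\top Z^\ell V$, uses the Rayleigh-quotient lower bounds $\bfx^\top J^\ell\bfx\ge\tfrac{\dt}{\veps}z_1^\ell\|\bfx\|_2^2$ and $\|J^\ell\bfx\|_2^2\ge(\tfrac{\dt}{\veps}z_1^\ell)^2\|\bfx\|_2^2$, and expands $\|(I+J^\ell)\bfx\|_2^2$ to obtain $\|\bfy\|_2\ge(1+\tfrac{\dt}{\veps}z_1^\ell)\|M^\ell\bfy\|_2$. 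Your route is a bit cleaner and yields the sharp value of $\|M^\ell\|_2$; the paper's route avoids invoking the spectral theorem explicitly but is otherwise equivalent in content.
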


    \begin{proof}
We verify the bound for $M^\ell$; the proof of the bound on $\what{M}^\ell$ is nearly identical.
Let $M^\ell = (I+J^\ell)^{-1}$, where $J^\ell = \frac{\dt}{\veps}V^\top Z^\ell V$ and $z_1^\ell > 0$ is the minimum positive eigenvalue of $Z^\ell$.
Then $J^\ell$ is symmetric, and for any $\bfx \in \bbR^{N-1}$,
    \begin{equation}
    \label{eq:M_bounds}
\bfx^\top J^\ell\bfx
  \geq
    \frac{\dt}{\veps}z_{1}^\ell\|\bfx\|_2^2
\qquand
\|J^\ell\bfx\|^2_2
  =
    \bfx^\top (J^\ell)^2\bfx
  \geq
    \left(\frac{{\dt}}{\veps}z_{1}^\ell \right)^2\|\bfx\|_2^2
.
    \end{equation}
Given $\bfy \in \bbR^{N-1}$, let $\bfx =  M^\ell \bfy =  (I+J^\ell)^{-1}\bfy$.
Then $\bfy = (I+J^\ell)\bfx$ and, using \eqref{eq:M_bounds},
    \begin{equation}
\|\bfy\|^2_2 
  =
    \|\bfx\|^2_2 + 2 \bfx^\top (J^\ell)\bfx +\|J^\ell\bfx\|^2_2
  \geq
    \|\bfx\|^2_2
    +
    2\frac{\dt}{\veps}z_{1}^\ell \|\bfx\|^2_2
    +
    \left(\frac{{\dt}}{\veps}z_{1}^\ell \right)^2 \|\bfx\|^2_2 
  =
    \left( 1+\frac{\dt}{\veps}z_{1}^\ell\right)^2\|\bfx\|^2_2
.
    \end{equation}
Since $\bfy$ is arbitrary and $z_{\min}^\mathfrak{n}\leq z_1^\ell$, the bounds on $M^\ell$ follow.
    \end{proof}

    \section{\texorpdfstring{Bounding Differences of $Z^\ell$ and $\what{Z}^\ell$ Matrices}{Z(ell),ZHat(ell)}}
    \label{appendix:Z_and_ZHat_diffs}
The following lemma is a consequence of the difference of squares formula.
    \begin{lemma}
    \label{lemma:sqrtIsLipschitz}
Suppose that $x,y,\kappa\in\bbR$ such that $0<\kappa\leq x,y$.
Then
    \begin{equation}
\left|
  \sqrt{x}-\sqrt{y}
\right|
  \leq 
    \frac{1}{2\sqrt{\kappa}}
    |x-y|
.
    \end{equation}
    \end{lemma}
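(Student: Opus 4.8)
The plan is to reduce everything to the elementary factorization $x-y = (\sqrt{x}-\sqrt{y})(\sqrt{x}+\sqrt{y})$, which is the ``difference of squares formula'' referenced just before the statement. Since the hypothesis $0<\kappa\le x,y$ guarantees that $x$ and $y$ are strictly positive, the real square roots $\sqrt{x}$ and $\sqrt{y}$ are well defined, and the identity holds without any sign ambiguity.

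First I would record that identity and solve for the quantity of interest: because $x,y>0$ we have $\sqrt{x}+\sqrt{y}>0$, so we may divide to obtain
    \begin{equation}
\left|\sqrt{x}-\sqrt{y}\right|
  =
    \frac{|x-y|}{\sqrt{x}+\sqrt{y}}
.
    \end{equation}
Next I would bound the denominator from below. Monotonicity of the square root together with $x\ge\kappa$ and $y\ge\kappa$ gives $\sqrt{x}\ge\sqrt{\kappa}$ and $\sqrt{y}\ge\sqrt{\kappa}$, hence $\sqrt{x}+\sqrt{y}\ge 2\sqrt{\kappa}>0$. Substituting this lower bound into the displayed equality yields
    \begin{equation}
\left|\sqrt{x}-\sqrt{y}\right|
  \le
    \frac{|x-y|}{2\sqrt{\kappa}}
,
    \end{equation}
which is the claim.

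There is essentially no obstacle here; the only point requiring the full strength of the hypothesis is that $\kappa>0$, which is exactly what keeps the denominator $\sqrt{x}+\sqrt{y}$ bounded away from zero and makes the division legitimate. (If one only assumed $x,y\ge 0$, the map $t\mapsto\sqrt{t}$ would fail to be Lipschitz near $0$, which is precisely the non-Lipschitz behavior of $\Psi_{i,j}$ at zero temperature noted after \eqref{eq:Psi}; the role of $\kappa$, later taken to be $T_{\min}^{\mathfrak n}$ via \eqref{eq:tempBound}, is to avoid that degeneracy.)
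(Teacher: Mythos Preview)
Your proof is correct and follows exactly the approach the paper indicates: the lemma is stated immediately after the remark that it ``is a consequence of the difference of squares formula,'' and no further proof is given. Your use of $x-y=(\sqrt{x}-\sqrt{y})(\sqrt{x}+\sqrt{y})$ together with $\sqrt{x}+\sqrt{y}\ge 2\sqrt{\kappa}$ is precisely what is intended.
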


    \begin{lemma}[Bounding 
    \texorpdfstring{$|\Delm{A}|$ and $|\Delm{B}|$ }{|{A,B}(ell)-{A,B}(ell-1)|}]
    \label{lemma:A_ij_B_ij_Diffs}
Suppose $T_i^{\ell+1} \geq \min_k\{T_k^{\mathfrak{n}}\}\eqqcolon T_{\min}^{\mathfrak{n}}$.
Then
    \begin{subequations}
    \begin{align}
    \label{eq:DelA_bound}
|\Delm{(A_{i,j}})|
  &\leq
    C^{\mathfrak{n}}_A
    \left(
      |\Delm{(T_i)}| + |\Delm{(T_j)}| 
    \right)
,
    \\
    \label{eq:DelB_bound}
|\Delm{(B_{i,j})}|
  &\leq
    C_B^\mathfrak{n}
    \left(
      |\Delm{(T_i)}| + |\Delm{(T_j)}| 
    \right)
,
    \end{align} 
    \end{subequations}
where
    \begin{equation}
C_A^{\mathfrak{n}}
  = 
    \frac
        {\max_{i,j}\{c_{i,j}^A\} \; m_{\max}^{\sfrac{1}{2}}}
        {2\sqrt{2}m_{\min}\sqrt{T_{\min}^{\mathfrak{n}}}} 
\qquand
C_B^\mathfrak{n}
  = 
    \frac
        {\max_{i,j}\{c_{i,j}^B\}\;m_{\max}^{\sfrac{1}{2}}}
        {2\sqrt{2}m_{\min}\sqrt{T_{\min}^{\mathfrak{n}}}}
.
    \end{equation}
    \end{lemma}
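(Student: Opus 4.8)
The plan is to exploit the structure recorded in \Cref{definition:A_ij_B_ij}: the only $\ell$-dependence in $A_{i,j}^\ell$ and $B_{i,j}^\ell$ enters through the common factor $\Psi_{i,j}^\ell = \sqrt{T_i^\ell/m_i + T_j^\ell/m_j}$, so that $\Delm{(A_{i,j})} = c_{i,j}^A\,\Delm{(\Psi_{i,j})}$ and $\Delm{(B_{i,j})} = c_{i,j}^B\,\Delm{(\Psi_{i,j})}$. It therefore suffices to bound $|\Delm{(\Psi_{i,j})}|$ and then multiply by $\max_{i,j}\{c_{i,j}^A\}$ or $\max_{i,j}\{c_{i,j}^B\}$, respectively.

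To bound $|\Delm{(\Psi_{i,j})}|$, I would apply \Cref{lemma:sqrtIsLipschitz} with $x = T_i^\ell/m_i + T_j^\ell/m_j$ and $y = T_i^{\ell-1}/m_i + T_j^{\ell-1}/m_j$. The temperature lower bound \eqref{eq:tempBound}, together with the trivial bound $T_k^{0} = T_k^{\mathfrak{n}} \geq T_{\min}^{\mathfrak{n}}$ for the initial iterate, gives $T_k^\ell, T_k^{\ell-1} \geq T_{\min}^{\mathfrak{n}}$ for every $k$, hence $x, y \geq T_{\min}^{\mathfrak{n}}/m_i + T_{\min}^{\mathfrak{n}}/m_j \geq 2T_{\min}^{\mathfrak{n}}/m_{\max} \eqqcolon \kappa > 0$. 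Then \Cref{lemma:sqrtIsLipschitz} yields $|\Delm{(\Psi_{i,j})}| \leq \tfrac{1}{2\sqrt{\kappa}}\,|x-y| = \tfrac{m_{\max}^{1/2}}{2\sqrt{2}\,\sqrt{T_{\min}^{\mathfrak{n}}}}\,|x-y|$.

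Finally, I would bound the argument difference by the triangle inequality and $1/m_k \leq 1/m_{\min}$, namely $|x-y| \leq |\Delm{(T_i)}|/m_i + |\Delm{(T_j)}|/m_j \leq \tfrac{1}{m_{\min}}\bigl(|\Delm{(T_i)}| + |\Delm{(T_j)}|\bigr)$. Combining with the previous estimate gives $|\Delm{(\Psi_{i,j})}| \leq \tfrac{m_{\max}^{1/2}}{2\sqrt{2}\,m_{\min}\sqrt{T_{\min}^{\mathfrak{n}}}}\bigl(|\Delm{(T_i)}| + |\Delm{(T_j)}|\bigr)$, and multiplying through by $c_{i,j}^A \leq \max_{i,j}\{c_{i,j}^A\}$ (resp.\ $c_{i,j}^B \leq \max_{i,j}\{c_{i,j}^B\}$) produces exactly \eqref{eq:DelA_bound} (resp.\ \eqref{eq:DelB_bound}).

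There is no substantive obstacle in this argument; it is a direct application of the Lipschitz estimate for the square root off of zero, which is precisely why challenging feature (ii) of $\lambda_{i,j}^{\textnormal{HS}}$ noted after \eqref{eq:Psi} must be neutralized by an \emph{a priori} temperature bound. The only point requiring care is ensuring that the uniform lower bound $T_{\min}^{\mathfrak{n}}$ applies to the temperature iterates at \emph{both} levels $\ell$ and $\ell-1$ (so that $\kappa > 0$), which is exactly what \eqref{eq:tempBound} provides; the factor $\sqrt{2}$ appearing in $C_A^{\mathfrak{n}}$ and $C_B^{\mathfrak{n}}$ comes from retaining both terms $T_{\min}^{\mathfrak{n}}/m_i$ and $T_{\min}^{\mathfrak{n}}/m_j$ in the lower bound for $\kappa$ rather than discarding one.
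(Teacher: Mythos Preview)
Your proposal is correct and follows essentially the same route as the paper: both isolate the $\ell$-dependence in the factor $\Psi_{i,j}^\ell$, apply \Cref{lemma:sqrtIsLipschitz} with $\kappa = 2T_{\min}^{\mathfrak{n}}/m_{\max}$, and then bound $|x-y|$ by $(|\Delm{(T_i)}|+|\Delm{(T_j)}|)/m_{\min}$ before multiplying through by $\max_{i,j}\{c_{i,j}^A\}$ (resp.\ $\max_{i,j}\{c_{i,j}^B\}$).
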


    \begin{proof}
We prove only the bound in \eqref{eq:DelA_bound}.
The proof of \eqref{eq:DelB_bound} is nearly identical.
Let $\Psi^\ell_{i,j} = \Psi_{i,j}(T_i^\ell,T_j^\ell)$ where $\Psi_{i,j}$ is given in \eqref{eq:Psi}.
Then due to the lower bound on the temperature, 
    \begin{equation}
x \coloneqq (\Psi^\ell_{i,j})^2
    ,\qquad 
y \coloneqq (\Psi^{\ell-1}_{i,j})^2
    ,\quand
\kappa \coloneqq \frac{2T_{\min}^{\mathfrak{n}}}{m_{\max}}
    \end{equation}
satisfy the conditions of \Cref{lemma:sqrtIsLipschitz}.
Therefore,
    \begin{equation}
\left|
  \Delm{(\Psi_{i,j})}
\right|
  \leq
    \frac{1}{2} 
    \sqrt{\frac{m_{\max}}{2T_{\min}^{\mathfrak{n}}}}\;
    \left|
      \frac{\Delm{(T_i)}}{m_i} + \frac{\Delm{(T_j)}}{m_j}
    \right|
,
    \end{equation}
and hence, using \Cref{definition:A_ij_B_ij},
    \begin{equation}
|\Delm{(A_{i,j})}|
  =
    c_{i,j}^A|\Delm{(\Psi_{i,j})}|
  \leq
    C^\mathfrak{n}_A
    \left(
      |\Delm{(T_i)}| + |\Delm{(T_j)}|
    \right)
.
    \end{equation}
    \end{proof}
    
    \begin{prop}[Bounding 
    \texorpdfstring{$\|\Delm{Z}\|_2$ and $\|\Delm{\what{Z}}\|_2$}{|Z(ell)-Z(ell-1)|}]
Suppose that $T_i^{\ell+1} \geq \min_k\{T_k^{\mathfrak{n}}\}\eqqcolon T_{\min}^{\mathfrak{n}}$.
Then
    \begin{equation}
\|\Delm{Z}\|_2
  \leq C_{Z}^\mathfrak{n}
    \|\Delm{\bfeta}\|_2
\qquand
\|\Delm{\what{Z}}\|_2
  \leq C_{\what{Z}}^\mathfrak{n}
    \|\Delm{\bfeta}\|_2
,
    \end{equation}
where
    \begin{equation}
C_Z^\mathfrak{n}
  =
    \frac
        {2 (N + N^{\sfrac{1}{2}}) C_A^\mathfrak{n}}
        {\rho_{\min}n_{\min}^{\sfrac{1}{2}}}
\qquand
C_{\what{Z}}^\mathfrak{n}
    =
      \frac
          {2(N+N^{\sfrac{1}{2}})C_B^\mathfrak{n}}
          {n_{\min}^{\sfrac{3}{2}}}
.
    \end{equation}
    \end{prop}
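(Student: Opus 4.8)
The plan is to reduce everything to entrywise bounds on $\Delm{A}$ and $\Delm{B}$ supplied by \Cref{lemma:A_ij_B_ij_Diffs}, and then to propagate those through the defining formulas $Z^\ell = P^{-\sfrac12}(D^\ell-A^\ell)P^{-\sfrac12}$ and $\what Z^\ell = Q^{-\sfrac12}(F^\ell-B^\ell)Q^{-\sfrac12}$ using elementary operator-norm estimates. Since $T_i^{\ell+1}\geq T_{\min}^\mathfrak{n}$ is exactly the hypothesis of \Cref{lemma:A_ij_B_ij_Diffs} (and holds by \Cref{prop:GST_properties}), that lemma applies and gives $|\Delm{(A_{i,j})}|\leq C_A^\mathfrak{n}(|\Delm{(T_i)}|+|\Delm{(T_j)}|)$, with the analogous statement for $B$.

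First I would estimate $\|\Delm{(D-A)}\|_2$. Because $A$ is symmetric (both $c_{i,j}^A$ and $\Psi_{i,j}$ are symmetric under $i\leftrightarrow j$), so is $\Delm{A}$, and hence $\|\Delm{A}\|_2\leq\|\Delm{A}\|_\infty=\max_i\sum_j|\Delm{(A_{i,j})}|$. Feeding in the entrywise bound,
    \begin{equation}
\max_i\sum_j|\Delm{(A_{i,j})}|
  \leq C_A^\mathfrak{n}\max_i\big(N|\Delm{(T_i)}|+\|\Delm{\bfT}\|_1\big)
  \leq C_A^\mathfrak{n}\big(N\|\Delm{\bfT}\|_\infty+\sqrt{N}\,\|\Delm{\bfT}\|_2\big)
  \leq C_A^\mathfrak{n}(N+N^{\sfrac12})\|\Delm{\bfT}\|_2 ,
    \end{equation}
using $\|\cdot\|_1\leq\sqrt N\|\cdot\|_2$ and $\|\cdot\|_\infty\leq\|\cdot\|_2$ on $\bbR^N$. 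The diagonal matrix $\Delm{D}$ has entries $\sum_k\Delm{(A_{i,k})}$, so $\|\Delm{D}\|_2=\max_i|\sum_k\Delm{(A_{i,k})}|$ is bounded by the same quantity. Adding the two gives $\|\Delm{(D-A)}\|_2\leq 2C_A^\mathfrak{n}(N+N^{\sfrac12})\|\Delm{\bfT}\|_2$, and the identical argument with $B$ in place of $A$ gives $\|\Delm{(F-B)}\|_2\leq 2C_B^\mathfrak{n}(N+N^{\sfrac12})\|\Delm{\bfT}\|_2$.

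Finally I would conjugate and convert to the $\bfeta$-norm. Using $\|P^{-\sfrac12}\|_2=\rho_{\min}^{-\sfrac12}$ and $\|Q^{-\sfrac12}\|_2=n_{\min}^{-\sfrac12}$,
    \begin{equation}
\|\Delm{Z}\|_2\leq\frac{1}{\rho_{\min}}\|\Delm{(D-A)}\|_2,\qquad
\|\Delm{\what Z}\|_2\leq\frac{1}{n_{\min}}\|\Delm{(F-B)}\|_2 ,
    \end{equation}
and since $\bfeta=Q^{\sfrac12}\bfT$ we have $\|\Delm{\bfT}\|_2=\|Q^{-\sfrac12}\Delm{\bfeta}\|_2\leq n_{\min}^{-\sfrac12}\|\Delm{\bfeta}\|_2$. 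Combining these yields $\|\Delm{Z}\|_2\leq\frac{2(N+N^{\sfrac12})C_A^\mathfrak{n}}{\rho_{\min}n_{\min}^{\sfrac12}}\|\Delm{\bfeta}\|_2$ and $\|\Delm{\what Z}\|_2\leq\frac{2(N+N^{\sfrac12})C_B^\mathfrak{n}}{n_{\min}^{\sfrac32}}\|\Delm{\bfeta}\|_2$, which are exactly the stated constants $C_Z^\mathfrak{n}$ and $C_{\what Z}^\mathfrak{n}$. The only real subtlety — and where care is needed — is the passage from entrywise control to operator-norm control: one must exploit the symmetry of $\Delm{A}$ (and of $\Delm{F}-\Delm{B}$, noting $\Delm{F}$ is diagonal) to use the $\|\cdot\|_\infty$ row-sum bound, and track the $N+N^{\sfrac12}$ factor coming from comparing $\ell^1$ and $\ell^\infty$ norms of the temperature increments to $\|\Delm{\bfT}\|_2$; everything else is routine submultiplicativity and the triangle inequality.
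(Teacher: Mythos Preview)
Your proof is correct and recovers exactly the stated constants. The overall architecture matches the paper's: pass from $\|\Delm{Z}\|_2$ to $\|\Delm{(D-A)}\|_2$ via $\|P^{-\sfrac12}\|_2^2=\rho_{\min}^{-1}$, feed the entrywise estimate of \Cref{lemma:A_ij_B_ij_Diffs} into a row-sum bound to produce the factor $(N+N^{\sfrac12})$, and convert $\|\Delm{\bfT}\|_2$ to $\|\Delm{\bfeta}\|_2$ using $\bfeta=Q^{\sfrac12}\bfT$.

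The one substantive difference is how the factor $2$ in front of $(N+N^{\sfrac12})$ arises. The paper first argues that $\|\Delm{A}\|_2 \leq \|\Delm{D}\|_2$ by invoking positive semi-definiteness, and then writes $\|\Delm{(D-A)}\|_2 \leq 2\|\Delm{D}\|_2$, bounding only the diagonal piece. You instead use the symmetry of $\Delm{A}$ to get $\|\Delm{A}\|_2 \leq \|\Delm{A}\|_\infty$, observe that this row-sum coincides with the bound already obtained for $\|\Delm{D}\|_2$, and add the two via the triangle inequality. Your route is more elementary and self-contained: the inequality $\|M\|_2 \leq \|M\|_\infty$ for a real symmetric matrix follows at once from $\|M\|_2 \leq \sqrt{\|M\|_1\|M\|_\infty}$ and $\|M\|_1=\|M\|_\infty$, whereas the paper's step implicitly requires $\Delm{(D-A)}$ itself to be positive semi-definite, which does not follow merely from each $D^\ell-A^\ell$ being so.
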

    
    \begin{proof}
We prove the first inequality; the second is similar.
Since $Z = P^{-\sfrac12}(D-A)P^{-\sfrac12}$ is positive semi-definite, then for $\bfy=P^{-\sfrac{1}{2}}\bfx$,
    \begin{equation}
0 
  \leq
    \bfx^\top \Delm{Z}\bfx
  =
    \bfy^\top \Delm{(D-A)}\bfy
\qquad\Longrightarrow\qquad
\frac{\bfy^\top\Delm{A}\bfy}{\bfy^\top\bfy}
  \leq 
    \frac{\bfy^\top\Delm{D}\bfy}{\bfy^\top\bfy}
.
    \end{equation}
Thus $\|\Delm{A}\|_2 \leq \|\Delm{D}\|_2$, and using \Cref{lemma:A_ij_B_ij_Diffs} in the second line below gives, 
    \begin{equation}
    \begin{split}
&\|\Delm{Z}\|_2 
  \leq
    \frac{1}{\rho_{\min}}\|\Delm{(D-A)} \|_2 
  \leq
    \frac{2}{\rho_{\min}}\|\Delm{D}\|_2
    \\
  &\quad=
    \frac{2}{\rho_{\min}}\max_i
    \big|
      \sum_k\Delm{(A_{i,k})}
    \big|
  \leq
    \frac{2C_A^{\mathfrak{n}}}{\rho_{\min}}\max_i\sum_k(|\Delm{(T_i)}| + |\Delm{(T_k)}|)
    \\
  &\quad\leq
    \frac{2C_A^{\mathfrak{n}}}{\rho_{\min}}N\|\Delm{\bfT}\|_2
    +
    \frac{2C_A^{\mathfrak{n}}}{\rho_{\min}}N^{\sfrac{1}{2}}\|\Delm{\bfT}\|_2
  \leq
    \frac{2C_A^{\mathfrak{n}}}{\rho_{\min}}(N+N^{\sfrac{1}{2}})\frac{\|\Delm{\bfeta}\|_2}{n_{\min}^{\sfrac{1}{2}}}
,
    \end{split}
    \end{equation}
where the last line follow from the relation $\bfeta = Q^{\sfrac12} \bfT$ with $Q_{j,k} = n_k \delta_{j,k}$.
    \end{proof}

    \section{Invariance of \texorpdfstring{$\alpha_{i,j}$}{alpha} and \texorpdfstring{$\beta_{i,j}$}{beta}}

    \begin{lemma}
    \label{lemma:alpha_beta_diffs}
For any $i,j\in\{1,\cdots,N\}$, let
    \begin{equation}
\alpha_{i,j}^\ell
  =
    \frac
        {\rho_i\lambda_{i,j}^\ell}
        {\rho_i\lambda_{i,j}^\ell+\rho_j\lambda_{j,i}^\ell}
\qquand
\beta_{i,j}^\ell
  =
    \frac
        {n_i\lambda_{i,j}^\ell}
        {n_i\lambda_{i,j}^\ell+n_j\lambda_{j,i}^\ell}
,
    \end{equation}
where (cf. \eqref{eq:collisionFrequencies}) $\lambda^\ell_{i,j}=c_{i,j}n_j\Psi^\ell_{i,j}$.
Then 
    \begin{equation}
\alpha_{i,j}^{\ell+1}-\alpha_{i,j}^\ell
    = 0 =
\beta_{i,j}^{\ell+1}-\beta_{i,j}^\ell
.
    \end{equation}
    \end{lemma}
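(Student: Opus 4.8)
The plan is to exploit the fact that the only $\ell$-dependence in the collision frequencies $\lambda_{i,j}^\ell = c_{i,j}n_j\Psi_{i,j}^\ell$ enters through the scalar factor $\Psi_{i,j}^\ell = \Psi_{i,j}(T_i^\ell,T_j^\ell)$, that this factor is symmetric under interchange of the species indices, and that consequently it cancels from the numerator and denominator of the quotients defining $\alpha_{i,j}^\ell$ and $\beta_{i,j}^\ell$. All the other ingredients ($c_{i,j}$, $\rho_i$, $n_i$) are manifestly independent of $\ell$: the densities are $\ell$-independent by \eqref{eq:BE_number} and the remark following the Backward Euler proposition, while $c_{i,j}$ depends only on the fixed masses and reference diameters.

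First I would record that, directly from \eqref{eq:Psi}, $\Psi_{i,j}(T_i,T_j) = \sqrt{T_i/m_i+T_j/m_j} = \Psi_{j,i}(T_j,T_i)$, so that $\Psi_{j,i}^\ell = \Psi_{i,j}^\ell$ for every $\ell$; note also that $\Psi_{i,j}^\ell>0$ (by positivity of the temperatures, or \eqref{eq:tempBound}), so division by it is legitimate. Substituting $\lambda_{i,j}^\ell = c_{i,j}n_j\Psi_{i,j}^\ell$ and $\lambda_{j,i}^\ell = c_{j,i}n_i\Psi_{j,i}^\ell = c_{j,i}n_i\Psi_{i,j}^\ell$ into the definition of $\alpha_{i,j}^\ell$ gives
\[
\alpha_{i,j}^\ell
  = \frac{\rho_i c_{i,j} n_j \Psi_{i,j}^\ell}
         {\rho_i c_{i,j} n_j \Psi_{i,j}^\ell + \rho_j c_{j,i} n_i \Psi_{i,j}^\ell}
  = \frac{\rho_i c_{i,j} n_j}{\rho_i c_{i,j} n_j + \rho_j c_{j,i} n_i},
\]
and the right-hand side does not depend on $\ell$; hence $\alpha_{i,j}^{\ell+1} - \alpha_{i,j}^\ell = 0$. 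The computation for $\beta_{i,j}^\ell$ is identical up to replacing the weights $\rho_i,\rho_j$ by $n_i,n_j$: one obtains $\beta_{i,j}^\ell = c_{i,j} n_i n_j/(c_{i,j} n_i n_j + c_{j,i} n_i n_j) = c_{i,j}/(c_{i,j}+c_{j,i})$, again $\ell$-independent, so $\beta_{i,j}^{\ell+1} - \beta_{i,j}^\ell = 0$.

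There is essentially no obstacle here; the proof is a one-line cancellation. The only point worth stating carefully is the index symmetry $\Psi_{i,j} = \Psi_{j,i}$, which is what makes the common factor appear in both terms of each denominator, together with the observation that the quantities $c_{i,j}$, $\rho_i$, and $n_i$ carry no iteration index. This lemma is exactly what is needed to justify treating $\alpha_{j,i}$ and $\beta_{j,i}$ as constants (independent of $\ell$) in the proof of \Cref{prop:GST_properties}.
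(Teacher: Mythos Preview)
Your proof is correct and rests on the same key observation as the paper's, namely the index symmetry $\Psi_{i,j}=\Psi_{j,i}$ combined with the factored form $\lambda_{i,j}^\ell=c_{i,j}n_j\Psi_{i,j}^\ell$. The only difference is organizational: the paper computes the difference $\alpha_{i,j}^{\ell+1}-\alpha_{i,j}^\ell$ over a common denominator and then argues that the cross term $\lambda_{i,j}^{\ell+1}\lambda_{j,i}^\ell$ in the numerator is symmetric in $i,j$ (hence the numerator vanishes), whereas you cancel the common factor $\Psi_{i,j}^\ell$ up front to exhibit $\alpha_{i,j}^\ell$ itself as an $\ell$-independent ratio; your route is marginally more direct and makes the constancy of $\alpha_{i,j}$ and $\beta_{i,j}$ explicit rather than inferred.
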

    
    \begin{proof}
Only the first equality is shown.
The other case is similar.
Direct calculation gives
    \begin{equation}
    \label{eq:alpha_diff}
\alpha_{i,j}^{\ell+1}-\alpha_{i,j}^\ell
  =
    \frac
        {
          \rho_i\rho_j
          \left(
            \lambda_{i,j}^{\ell+1}\lambda_{j,i}^{\ell} 
            -
            \lambda_{i,j}^\ell\lambda_{j,i}^{\ell+1}
          \right)
        }
        {
          (\rho_i\lambda_{i,j}^{\ell+1}+\rho_j\lambda_{j,i}^{\ell+1})
          (\rho_i\lambda_{i,j}^{\ell}+\rho_j\lambda_{j,i}^{\ell})
        }
,
    \end{equation}
It is easy to verify that the product $\lambda_{i,j}^{\ell+1}\lambda_{j,i}^{\ell}$ is symmetric in $i,j$.
Hence the difference in \eqref{eq:alpha_diff} is identically zero.
    \end{proof}

    \section{Temperature Source Terms Are Positive}
This appendix contains three technical lemmas used in the proof of \eqref{eq:tempBound} in \Cref{prop:GST_properties}.
    \begin{lemma}
    \label{lemma:s_i_diffs}
    \begin{equation}
s_i^{n}-s_i^{\ell+1}
  =
    \left|
      \bfu_i^{\ell+1}-\bfu_i^{\mathfrak{n}}
    \right|^2
    +
    \frac{\dt}{\veps}\sum_j\lambda_{i,j}^{\ell}\alpha_{j,i}
    \left[
      2s_i^{\ell+1}
      -
      2
      \left(
        \bfu_i^{\ell+1},\bfu_j^{\ell+1}
      \right)_2
    \right]
.
    \end{equation}
    \end{lemma}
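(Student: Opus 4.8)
The plan is to start from the componentwise velocity update already derived in the proof of \Cref{prop:GST_properties}, namely
    \begin{equation*}
\bfu_i^{\ell+1}
  =
    \bfu_i^{\mathfrak{n}}
    +
    \frac{\dt}{\veps}\sum_j\lambda_{i,j}^{\ell}\alpha_{j,i}
    \left(
      \bfu_j^{\ell+1}-\bfu_i^{\ell+1}
    \right)
,
    \end{equation*}
which is the vector form (over the $d$ velocity components) of the scalar relation stated there, with $\alpha_{j,i}=\frac{\rho_j\lambda_{j,i}^\ell}{\rho_i\lambda_{i,j}^\ell+\rho_j\lambda_{j,i}^\ell}$. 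Equivalently, $\bfu_i^{\ell+1}-\bfu_i^{\mathfrak{n}} = \frac{\dt}{\veps}\sum_j\lambda_{i,j}^{\ell}\alpha_{j,i}\left(\bfu_j^{\ell+1}-\bfu_i^{\ell+1}\right)$.

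The key step is a ``complete the square'' identity: writing $\bfu_i^{\mathfrak{n}} = \bfu_i^{\ell+1} - (\bfu_i^{\ell+1}-\bfu_i^{\mathfrak{n}})$ and expanding the square gives
    \begin{equation*}
s_i^{\mathfrak{n}}
  =
    |\bfu_i^{\mathfrak{n}}|^2
  =
    |\bfu_i^{\ell+1}|^2
    -
    2\left(\bfu_i^{\ell+1},\bfu_i^{\ell+1}-\bfu_i^{\mathfrak{n}}\right)_2
    +
    \left|\bfu_i^{\ell+1}-\bfu_i^{\mathfrak{n}}\right|^2
,
    \end{equation*}
so that $s_i^{\mathfrak{n}}-s_i^{\ell+1} = \left|\bfu_i^{\ell+1}-\bfu_i^{\mathfrak{n}}\right|^2 - 2\left(\bfu_i^{\ell+1},\bfu_i^{\ell+1}-\bfu_i^{\mathfrak{n}}\right)_2$. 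Into the inner-product term I substitute the expression for $\bfu_i^{\ell+1}-\bfu_i^{\mathfrak{n}}$ from the update and use bilinearity: $-2\left(\bfu_i^{\ell+1},\bfu_i^{\ell+1}-\bfu_i^{\mathfrak{n}}\right)_2 = \frac{\dt}{\veps}\sum_j\lambda_{i,j}^{\ell}\alpha_{j,i}\left[2|\bfu_i^{\ell+1}|^2 - 2\left(\bfu_i^{\ell+1},\bfu_j^{\ell+1}\right)_2\right]$, and $|\bfu_i^{\ell+1}|^2 = s_i^{\ell+1}$. Combining the two displays yields exactly the claimed identity.

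There is essentially no serious obstacle here; the result is a direct algebraic rearrangement of the velocity update. The only points requiring minor care are (i) using the vector (rather than scalar-component) form of the update so that the Euclidean inner product $(\cdot,\cdot)_2$ on $\bbR^d$ appears, and (ii) noting that $\alpha_{j,i}$ may be taken at iterate level $\ell$ without ambiguity — indeed it is independent of $\ell$ by \Cref{lemma:alpha_beta_diffs}, though that invariance is not strictly needed for this identity. No positivity or sign information is used, so the statement is a pure equality valid for every $\ell$.
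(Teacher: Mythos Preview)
Your proof is correct and essentially the same as the paper's: both start from the rearranged velocity update $\bfu_i^{\mathfrak{n}}-\bfu_i^{\ell+1}=\frac{\dt}{\veps}\sum_j\lambda_{i,j}^{\ell}\alpha_{j,i}(\bfu_i^{\ell+1}-\bfu_j^{\ell+1})$ and apply an elementary quadratic identity. The only cosmetic difference is that the paper takes the inner product of $\bfu_i^{\mathfrak{n}}-\bfu_i^{\ell+1}$ with $\bfu_i^{\mathfrak{n}}+\bfu_i^{\ell+1}$ (difference of squares), whereas you expand $|\bfu_i^{\mathfrak{n}}|^2=|\bfu_i^{\ell+1}-(\bfu_i^{\ell+1}-\bfu_i^{\mathfrak{n}})|^2$; the two computations are equivalent.
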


    \begin{proof}
Rearrange the iterative step from \eqref{eq:GSTstep} to obtain
    \begin{subequations}
    \begin{align}
\bfu_i^{n}-\bfu_i^{\ell+1}
  &=
    \frac{\dt}{\veps}\sum_j\lambda_{i,j}^{\ell}\alpha_{j,i}(\bfu_i^{\ell+1}-\bfu_j^{\ell+1})
    \\
\text{and}\qquad
\bfu_i^{n} + \bfu_i^{\ell+1}
  &=
    2\bfu_i^{\ell+1} + \frac{\dt}{\veps}\sum_j\lambda_{i,j}^{\ell}\alpha_{j,i}(\bfu_i^{\ell+1}-\bfu_j^{\ell+1})
.
    \end{align}    
    \end{subequations}
Take the inner product of these to obtain the result.
    \end{proof}

    \begin{lemma}
    \label{lemma:technical_identity}
    \begin{subequations}
    \begin{align}
    \label{eq:ident1}
&m_i\beta_{i,j}
  (s_i^{\ell+1}-S_{i,j}^{\ell+1})
  +
m_j\beta_{j,i}
  (s_j^{\ell+1}-S_{i,j}^{\ell+1})
  =
    m_i\alpha_{j,i}\beta_{i,j}
    \left|
      \bfu_i^{\ell+1}-\bfu_j^{\ell+1}
    \right|^2
,
    \\
    \label{eq:ident2}
&\frac{\dt}{\veps d}\sum_j
  \lambda_{i,j}^{\ell}\beta_{j,i}
    \left[m_j
      \left(
        s_j^{\ell+1}-S_{i,j}^{\ell+1}
      \right)
      -
      m_i
      \left(
        s_i^{\ell+1}-S_{i,j}^{\ell+1}
      \right)
    \right]
    \\
    \nonumber
  &\qquad= \frac{\dt}{\veps d}\sum_jm_i\lambda_{i,j}^{\ell}\alpha_{j,i}
  \bigg[
    \beta_{i,j}\left|\bfu_i^{\ell+1}-\bfu_j^{\ell+1}\right|^2
    - (1+\alpha_{i,j})s_i^{\ell+1}
    + \alpha_{j,i}s_j^{\ell+1}
    + 2\alpha_{i,j}\left(\bfu_i^{\ell+1},\bfu_j^{\ell+1}\right)_2
  \bigg]
.
    \end{align}
    \end{subequations}
    \end{lemma}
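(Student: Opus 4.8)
The plan is to treat both identities as purely algebraic statements holding at a single iteration level, so I suppress the superscripts $\ell$ and $\ell+1$ throughout and reinstate them (together with the prefactor $\frac{\dt}{\veps d}\sum_j\lambda_{i,j}^\ell$) only at the very end. Three elementary facts do all the work. From \eqref{eq:MomentDefs} and \Cref{lemma:alpha_beta_diffs} one has $\bfu_{i,j} = \alpha_{i,j}\bfu_i + \alpha_{j,i}\bfu_j$, together with $\alpha_{i,j}+\alpha_{j,i}=1$ and $\beta_{i,j}+\beta_{j,i}=1$. Consequently $\bfu_i - \bfu_{i,j} = \alpha_{j,i}(\bfu_i-\bfu_j)$ and $\bfu_j - \bfu_{i,j} = -\alpha_{i,j}(\bfu_i-\bfu_j)$, so that factoring the differences of squares gives
\[
s_i - S_{i,j} = \alpha_{j,i}\left(\bfu_i-\bfu_j,\ \bfu_i+\bfu_{i,j}\right)_2, \qquad s_j - S_{i,j} = -\alpha_{i,j}\left(\bfu_i-\bfu_j,\ \bfu_j+\bfu_{i,j}\right)_2.
\]
Finally, using $\rho_k = m_k n_k$, a one-line computation shows that
\[
m_i\alpha_{j,i}\beta_{i,j} = \frac{\rho_i\rho_j\lambda_{i,j}\lambda_{j,i}}{(\rho_i\lambda_{i,j}+\rho_j\lambda_{j,i})(n_i\lambda_{i,j}+n_j\lambda_{j,i})} = m_j\alpha_{i,j}\beta_{j,i},
\]
which is manifestly symmetric in $i\leftrightarrow j$; denote this common value $\mu_{i,j}$.

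To prove \eqref{eq:ident1}, I substitute the two factored expressions into its left-hand side. The $i$-term then carries the coefficient $m_i\beta_{i,j}\alpha_{j,i} = \mu_{i,j}$ and the $j$-term the coefficient $-m_j\beta_{j,i}\alpha_{i,j} = -\mu_{i,j}$, so the left side equals $\mu_{i,j}\left(\bfu_i-\bfu_j,\ (\bfu_i+\bfu_{i,j})-(\bfu_j+\bfu_{i,j})\right)_2 = \mu_{i,j}\,|\bfu_i-\bfu_j|^2 = m_i\alpha_{j,i}\beta_{i,j}\,|\bfu_i-\bfu_j|^2$, which is the claim.

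For \eqref{eq:ident2} I argue termwise in $j$, then multiply by $\lambda_{i,j}^\ell$, sum over $j$, and restore the prefactor. Using \eqref{eq:ident1} to eliminate $m_j\beta_{j,i}(s_j-S_{i,j})$ from the bracket $m_j(s_j-S_{i,j}) - m_i(s_i-S_{i,j})$ reduces the $\beta_{j,i}$-weighted bracket to $m_i\alpha_{j,i}\beta_{i,j}|\bfu_i-\bfu_j|^2 - m_i(\beta_{i,j}+\beta_{j,i})(s_i-S_{i,j})$, and $\beta_{i,j}+\beta_{j,i}=1$ collapses the last factor to $s_i-S_{i,j}$. It then remains only to expand this quantity: writing $\bfu_i+\bfu_{i,j} = (1+\alpha_{i,j})\bfu_i + \alpha_{j,i}\bfu_j$ in the factored identity above and simplifying the resulting cross term with $\alpha_{j,i}-1-\alpha_{i,j} = -2\alpha_{i,j}$ gives $s_i - S_{i,j} = \alpha_{j,i}\left[(1+\alpha_{i,j})s_i - \alpha_{j,i}s_j - 2\alpha_{i,j}(\bfu_i,\bfu_j)_2\right]$. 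Substituting this produces precisely $m_i\alpha_{j,i}\left[\beta_{i,j}|\bfu_i-\bfu_j|^2 - (1+\alpha_{i,j})s_i + \alpha_{j,i}s_j + 2\alpha_{i,j}(\bfu_i,\bfu_j)_2\right]$, the $j$-th summand on the right of \eqref{eq:ident2}.

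The only genuinely load-bearing observation is the cross-symmetry $m_i\alpha_{j,i}\beta_{i,j} = m_j\alpha_{i,j}\beta_{j,i}$, without which the two terms in \eqref{eq:ident1} refuse to combine; everything else is direct substitution, the one place demanding care being the sign bookkeeping in the final expansion of $s_i - S_{i,j}$ (specifically the identity $\alpha_{j,i}-1-\alpha_{i,j} = -2\alpha_{i,j}$ that creates the $(\bfu_i,\bfu_j)_2$ term).
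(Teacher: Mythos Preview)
Your proof is correct and follows essentially the same route as the paper's: both hinge on the cross-symmetry $m_i\alpha_{j,i}\beta_{i,j}=m_j\alpha_{i,j}\beta_{j,i}$ (together with $\alpha_{i,j}+\alpha_{j,i}=1=\beta_{i,j}+\beta_{j,i}$) to obtain \eqref{eq:ident1}, and then derive \eqref{eq:ident2} from \eqref{eq:ident1}. Your write-up simply supplies the intermediate algebra that the paper leaves to the reader.
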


    \begin{proof}
The proof of \eqref{eq:ident1} is a calculation using the fact that $\alpha_{i,j}+\alpha_{j,i}=1=\beta_{i,j}+\beta_{j,i}$, and the identities
    \begin{equation}
m_j\alpha_{i,j}\beta_{j,i}
  =
    m_i\alpha_{j,i}\beta_{i,j}
\qquiff
\alpha_{j,i}
  (m_i\beta_{i,j} + m_j\beta_{j,i})
  = m_j\beta_{j,i}
.
    \end{equation}
To prove \eqref{eq:ident2}, use \eqref{eq:ident1}.
    \end{proof}

    \begin{prop}
    \label{lemma:TempSourcesPositive}
    \begin{equation}
\frac{m_i}{d}(s_i^{\mathfrak{n}}-s_i^{\ell+1})
  +
    \frac{\dt}{\veps d}\sum_j\lambda_{i,j}^{\ell}\beta_{j,i}
      \left[
        m_j(s_j^{\ell+1}-S_{i,j}^{\ell+1})
          -
        m_i(s_i^{\ell+1}-S_{i,j}^{\ell+1})
      \right]
  \geq 0
.
    \end{equation}
    \end{prop}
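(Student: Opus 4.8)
The plan is to reduce the left-hand side to a manifest sum of nonnegative quantities by substituting the two algebraic identities already established. First I would apply Lemma~\ref{lemma:s_i_diffs} to rewrite the term $\frac{m_i}{d}(s_i^{\mathfrak{n}}-s_i^{\ell+1})$ as
\[
\frac{m_i}{d}\left|\bfu_i^{\ell+1}-\bfu_i^{\mathfrak{n}}\right|^2
+
\frac{m_i\dt}{\veps d}\sum_j\lambda_{i,j}^{\ell}\alpha_{j,i}
\left[2s_i^{\ell+1}-2\left(\bfu_i^{\ell+1},\bfu_j^{\ell+1}\right)_2\right],
\]
and then apply identity \eqref{eq:ident2} of Lemma~\ref{lemma:technical_identity} to rewrite the remaining $\beta_{j,i}$-weighted sum so that it also carries the common prefactor $\frac{m_i\dt}{\veps d}\sum_j\lambda_{i,j}^{\ell}\alpha_{j,i}$. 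The manifestly nonnegative term $\frac{m_i}{d}\left|\bfu_i^{\ell+1}-\bfu_i^{\mathfrak{n}}\right|^2$ is then set aside, and what remains is a single sum over $j$ with that common prefactor multiplying a bracket built from $s_i^{\ell+1}$, $s_j^{\ell+1}$, $\left(\bfu_i^{\ell+1},\bfu_j^{\ell+1}\right)_2$, and $\left|\bfu_i^{\ell+1}-\bfu_j^{\ell+1}\right|^2$.

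The core of the argument is then a short simplification of that bracket, namely
\[
2s_i^{\ell+1}-2\left(\bfu_i^{\ell+1},\bfu_j^{\ell+1}\right)_2
+\beta_{i,j}\left|\bfu_i^{\ell+1}-\bfu_j^{\ell+1}\right|^2
-(1+\alpha_{i,j})s_i^{\ell+1}+\alpha_{j,i}s_j^{\ell+1}
+2\alpha_{i,j}\left(\bfu_i^{\ell+1},\bfu_j^{\ell+1}\right)_2 .
\]
Using $\alpha_{i,j}+\alpha_{j,i}=1$, the coefficient of $s_i^{\ell+1}$ collapses to $2-(1+\alpha_{i,j})=\alpha_{j,i}$ and the coefficient of $\left(\bfu_i^{\ell+1},\bfu_j^{\ell+1}\right)_2$ collapses to $-2+2\alpha_{i,j}=-2\alpha_{j,i}$, so the bracket equals $\alpha_{j,i}\bigl(s_i^{\ell+1}-2\left(\bfu_i^{\ell+1},\bfu_j^{\ell+1}\right)_2+s_j^{\ell+1}\bigr)+\beta_{i,j}\left|\bfu_i^{\ell+1}-\bfu_j^{\ell+1}\right|^2$. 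Recognizing the first parenthesis as the perfect square $\left|\bfu_i^{\ell+1}-\bfu_j^{\ell+1}\right|^2$, the bracket simplifies to $(\alpha_{j,i}+\beta_{i,j})\left|\bfu_i^{\ell+1}-\bfu_j^{\ell+1}\right|^2$.

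Assembling the pieces, the left-hand side equals
\[
\frac{m_i}{d}\left|\bfu_i^{\ell+1}-\bfu_i^{\mathfrak{n}}\right|^2
+
\frac{m_i\dt}{\veps d}\sum_j\lambda_{i,j}^{\ell}\alpha_{j,i}(\alpha_{j,i}+\beta_{i,j})\left|\bfu_i^{\ell+1}-\bfu_j^{\ell+1}\right|^2,
\]
which is nonnegative since $m_i,\dt,\veps,d>0$, each $\lambda_{i,j}^{\ell}>0$, and $\alpha_{j,i},\beta_{i,j}\in(0,1)$ by the definitions in \Cref{lemma:alpha_beta_diffs}. I do not expect a genuine obstacle here: once the two identities of \Cref{lemma:s_i_diffs,lemma:technical_identity} are invoked, the computation is purely algebraic, and the only step requiring care is confirming that the cross terms cancel exactly into the perfect square $\left|\bfu_i^{\ell+1}-\bfu_j^{\ell+1}\right|^2$; all the analytic content sits in the earlier positivity of $\lambda_{i,j}^\ell$ and the bounds on $\alpha$, $\beta$.
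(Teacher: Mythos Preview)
Your proposal is correct and follows essentially the same approach as the paper: both invoke \Cref{lemma:s_i_diffs} and identity \eqref{eq:ident2} of \Cref{lemma:technical_identity}, then use $\alpha_{i,j}+\alpha_{j,i}=1$ to collapse the bracket into $(\alpha_{j,i}+\beta_{i,j})\left|\bfu_i^{\ell+1}-\bfu_j^{\ell+1}\right|^2$, arriving at the identical nonnegative expression. The paper simply states the final identity without displaying the intermediate coefficient bookkeeping that you spell out.
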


    \begin{proof}
Using \Cref{lemma:s_i_diffs,lemma:technical_identity}, and $\alpha_{i,j}+\alpha_{j,i}=1$,
    \begin{equation}
    \begin{split}
\frac{m_i}{d}
  &(s_i^{\mathfrak{n}}-s_i^{\ell+1})
  +
    \frac{\dt}{\veps d}\sum_j\lambda_{i,j}^{\ell}\beta_{j,i}
      \left[
        m_j(s_j^{\ell+1}-S_{i,j}^{\ell+1})
          -
        m_i(s_i^{\ell+1}-S_{i,j}^{\ell+1})
      \right]
    \\
  &= \frac{m_i}{d}
    \left|
      \bfu_i^{\ell+1}-\bfu_i^\mathfrak{n}
    \right|^2
    +
    \frac{\dt}{\veps}\frac{m_i}{d}
    \sum_j\lambda_{i,j}^\ell\alpha_{j,i}
    (\alpha_{j,i}+\beta_{i,j})
    \left|
      \bfu_i^{\ell+1}-\bfu_j^{\ell+1}
    \right|^2
  \geq 0
.
    \end{split}
    \end{equation}
    \end{proof}

    \bibliographystyle{plain}
    \bibliography{references}

    \end{document}